\newtheorem{theorem}{Theorem}[section]
\newtheorem{lemma}[theorem]{Lemma}
\newtheorem{proposition}[theorem]{Proposition}
\newtheorem{corollary}[theorem]{Corollary}
\theoremstyle{definition}
\newtheorem{definition}[theorem]{Definition}
\newtheorem{example}[theorem]{Example}
\theoremstyle{remark}
\newtheorem{remark}[theorem]{Remark}
\numberwithin{equation}{section}
\begin{document}
	
	\setcounter{page}{1}
	
	\title  [Continuous $bi$-$g$-frames for operators]{Continuous $bi$-$g$-frames for operators in Hilbert spaces}

	\author[A. Karara, M. Rossafi]{Abdelilah Karara$^{1}$ and Mohamed Rossafi$^{2*}$}
	
	\address{$^{1}$Department of Mathematics Faculty of Sciences, University of Ibn Tofail, B.P. 133, Kenitra, Morocco}
	\email{\textcolor[rgb]{0.00,0.00,0.84}{ abdelilah.karara.sm@gmail.com}}
	\address{$^{2}$Department of Mathematics Faculty of Sciences, Dhar El Mahraz University Sidi Mohamed Ben Abdellah, Fes, Morocco}
	\email{\textcolor[rgb]{0.00,0.00,0.84}{rossafimohamed@gmail.com}}
	
	\subjclass[2020]{42C15; 46C05; 47B90.}
	
	\keywords{Frames, Continuous frames, $bi-g$-frames, Hilbert spaces.}
	
	\begin{abstract}
In this paper, we will introduce the new concepts of continuous  bi-$g-$frames and continuous $K$-bi-$g$-frame for Hilbert spaces. Then, we examine some characterizations  properties with the help of a biframe operator. Finally, we investigate several results about the stability of continuous bi-$g$-Bessel sequence and $K$-bi-$g$-frame are produced via the use of frame theory methods.
\end{abstract}
\maketitle

\baselineskip=12.4pt

\section{Introduction}
 
The notion of frames in Hilbert spaces was introduced by Duffin and Schaffer \cite{Duffin} in 1952 to research certain difficult nonharmonic Fourier series problems. Following the fundamental paper \cite{Daubechies} by Daubechies, Grossman, and Meyer, frame theory started to become popular, especially in the more specific context of Gabor frames and wavelet frames \cite{Gabor}. A sequence $\left\{\Phi_{\omega}\right\}_{\omega \in \Omega}$ in $\mathcal{H}$ is called a frame for $\mathcal{H}$ if there exist two constants $0<A \leq B<\infty$ such that
$$
A\left\| f \right\|^2 \leq \sum_{\omega \in \Omega}\vert\left\langle f, \Phi_{\omega}\right\rangle\vert^{2}\leq B\left\| f \right\|^2, \text { for all } f \in \mathcal{H} .
$$ For more detailed information on frame theory, readers are recommended to consult: \cite{Assila, Christensen, Daubechies2, Ghiati, Han, Massit, Rossafi}.

The concept of $K$-frames was introduced by Laura Găvruţa \cite{Gav}, serves as a tool for investigating atomic systems with respect to a bounded linear operator $K$ in a separable Hilbert space. A sequence $\left\{\Phi_{\omega}\right\}_{\omega \in \Omega}$ in $\mathcal{H}$ is called a $K$-frame for $\mathcal{H}$ if there exist two constants $0<A \leq B <\infty$ such that
$$
A\left\|K^{\ast} f\right\|^2 \leq \sum_{\omega \in \Omega}\vert\left\langle f, \Phi_{\omega}\right\rangle\vert^{2} \leq B\|f\|^2, \text { for all } f \in \mathcal{H} .
$$ 

The notion of $K$-frames generalize ordinary frames in that the lower frame bound is applicable only to elements within the range of $K$. After that Xiao et al. \cite{Xiao} introduced the concept of a K-g-frame, which is a more general framework than both g-frames and K-frames in Hilbert spaces.

The idea of pair frames, which refers to a pair of sequences in a Hilbert space, was first presented in \cite{Fer} by Azandaryani and Fereydooni. Parizi, Alijani and Dehghan \cite{MF} studied biframe, which is a generalization of controlled frame in Hilbert space. The concept of a frame is defined by a single sequence, but to define a biframe we will need two sequences. In fact, the concept of a biframe is a generalization of controlled frames and a special case of pair frames.

 In this paper, we will introduce the concept of continuous bi-$g$-frames and continuous $K$-bi-$g$-frame in Hilbert space and present some examples of this type of frame. Moreover, we investigate a characterization of continuous bi-$g$-frames and continuous $K$-bi-$g$-frame by using the biframe operator. Finally, in our exploration of biframes, we investigate  some results about the stability of continuous bi-$g$-Bessel sequence and continuous $K$-bi-$g$-frame are produced via the use of frame theory.

\section{Notation and preliminaries}
Throughout this paper, $\mathcal{H}$ represents a separable Hilbert space. The notation $\mathcal{B}(\mathcal{H},\mathcal{K})$ denotes the collection of all bounded linear operators from $\mathcal{H}$ to the Hilbert space $\mathcal{K}$. When $\mathcal{H} = \mathcal{K}$, this set is denoted simply as $\mathcal{B}(\mathcal{H})$. We will use $\mathcal{N}(\mathcal{T})$ and $\mathcal{R}(\mathcal{T})$ for the null  and range space of an operator $\mathcal{T}\in \mathcal{B}(\mathcal{H})$. Also $\mathrm{GL}(\mathcal{H})$ is the collection of all invertible, bounded linear operators acting on $\mathcal{H}$. $\left\{\mathcal{K}_{\omega}\right\}_{\omega \in \Omega}$ is a sequence of closed subspaces of $\mathcal{H}$. 

Certainly, let's begin with some preliminaries. Before diving into the details, let's briefly recall the definition of a continuous $g$-frame:
\begin{definition}
 We say that $\Phi=\left\{\Phi_\omega \in \mathcal{B}\left(\mathcal{H}, \mathcal{K}_\omega\right): \omega \in \Omega\right\}$ is a continuous generalized frame or simply a continuous $g$-frame with respect to $\left\{\mathcal{K}_\omega\right\}_{\omega \in \Omega}$ for $H$ if 
 \begin{enumerate}
 \item[•]For each $f \in \mathcal{H},\left\{\Phi_\omega f\right\}_{\omega \in \Omega}$ is strongly measurable,
 
 \item[•] there are two constants $0<A \leq B<\infty$ such that
$$
A\|f\|^2 \leq \int_{\Omega}\left\|\Phi_\omega f\right\|^2 d \mu(\omega) \leq B\|f\|^2, f \in H .
$$
 \end{enumerate}

We call $A, B$ lower and upper continuous $g$-frame bounds, respectively.
\begin{theorem}\cite{Abramovich}
$\mathcal{T} \in\mathcal{B}(\mathcal{H})$ is an injective and closed range operator if and only if there exists a constant $c>0$ such that $c\|f\|^2 \leq\|\mathcal{T}  f\|^2$, for all $f \in \mathcal{H}$
\end{theorem}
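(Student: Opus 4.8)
The plan is to prove the two implications separately, with the backward direction resting on completeness of $\mathcal{H}$ and the forward direction on the open mapping (bounded inverse) theorem.

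First I would establish the easier direction: assume the bound $c\|f\|^2 \le \|\mathcal{T}f\|^2$ holds for all $f \in \mathcal{H}$. Injectivity is immediate, since $\mathcal{T}f = 0$ forces $c\|f\|^2 \le 0$ and hence $f = 0$, so $\mathcal{N}(\mathcal{T}) = \{0\}$. For closedness of the range, I would take an arbitrary convergent sequence $\mathcal{T}f_n \to g$ in $\overline{\mathcal{R}(\mathcal{T})}$ and use the bound in the reverse direction: because $c\|f_n - f_m\|^2 \le \|\mathcal{T}f_n - \mathcal{T}f_m\|^2$, the sequence $\{f_n\}$ is Cauchy whenever $\{\mathcal{T}f_n\}$ is. Completeness of $\mathcal{H}$ then yields a limit $f_n \to f$, and continuity of $\mathcal{T}$ gives $\mathcal{T}f = g$, so that $g \in \mathcal{R}(\mathcal{T})$ and the range is closed.

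For the forward direction, I would assume $\mathcal{T}$ is injective with closed range and view it as a map onto its range, $\mathcal{T}\colon \mathcal{H} \to \mathcal{R}(\mathcal{T})$. Since $\mathcal{R}(\mathcal{T})$ is closed in the complete space $\mathcal{H}$, it is itself a Hilbert space, and this corestricted operator is a continuous bijection. The crux is then to invoke the bounded inverse theorem: the inverse $\mathcal{T}^{-1}\colon \mathcal{R}(\mathcal{T}) \to \mathcal{H}$ is bounded, say $\|\mathcal{T}^{-1}g\| \le M\|g\|$ for all $g \in \mathcal{R}(\mathcal{T})$ and some $M > 0$. Substituting $g = \mathcal{T}f$ gives $\|f\| \le M\|\mathcal{T}f\|$, and setting $c = M^{-2}$ produces the desired inequality $c\|f\|^2 \le \|\mathcal{T}f\|^2$.

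The main obstacle I anticipate is the forward direction, specifically making rigorous the application of the open mapping theorem: one must corestrict $\mathcal{T}$ to its closed, hence complete, range so that the hypotheses of the bounded inverse theorem are genuinely satisfied, since $\mathcal{T}$ need not be surjective onto all of $\mathcal{H}$. The backward direction is then routine once the Cauchy-sequence argument is set up, with completeness of $\mathcal{H}$ doing the essential work.
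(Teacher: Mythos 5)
Your proof is correct. Note, however, that the paper itself gives no proof of this statement at all: it is quoted verbatim from the reference (Abramovich--Aliprantis), so there is no in-paper argument to compare against. Your two-step argument is the standard one for this fact. The backward direction is handled exactly right: injectivity is immediate from the lower bound, and the Cauchy-sequence transfer (from $\{\mathcal{T}f_n\}$ to $\{f_n\}$ via $c\|f_n-f_m\|^2 \leq \|\mathcal{T}f_n-\mathcal{T}f_m\|^2$) together with completeness of $\mathcal{H}$ and continuity of $\mathcal{T}$ closes the range. In the forward direction you correctly identify the one genuinely delicate point, namely that $\mathcal{T}$ need not be surjective onto $\mathcal{H}$, so the bounded inverse theorem must be applied to the corestriction $\mathcal{T}\colon \mathcal{H}\to\mathcal{R}(\mathcal{T})$; since $\mathcal{R}(\mathcal{T})$ is closed in $\mathcal{H}$ it is complete, the corestricted map is a continuous bijection of Banach spaces, and $\|f\|\leq M\|\mathcal{T}f\|$ with $c=M^{-2}$ follows (the degenerate case $M=0$ occurs only when $\mathcal{H}=\{0\}$, where the claim is vacuous). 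An equally quick alternative for the forward direction, in the spirit of the paper's own toolkit, is to use the pseudo-inverse $\mathcal{T}^{+}$ of the closed-range operator $\mathcal{T}$ (as in the paper's Definition 2.3): injectivity gives $\mathcal{T}^{+}\mathcal{T}f=f$ for all $f$, whence $\|f\|\leq\|\mathcal{T}^{+}\|\,\|\mathcal{T}f\|$ directly; this trades the explicit open-mapping argument for the (equivalent) existence of $\mathcal{T}^{+}$.
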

\begin{definition}\cite{Limaye} Let $\mathcal{H}$ be a Hilbert space, and suppose that $\mathcal{T} \in \mathcal{B}(\mathcal{H})$ has a closed range. Then there exists an operator $\mathcal{T}^{+} \in \mathcal{B}(\mathcal{H})$ for which
$$
N\left(\mathcal{T}^{+}\right)=\mathcal{R}(\mathcal{T})^{\perp}, \quad R\left(\mathcal{T}^{+}\right)=N(\mathcal{T})^{\perp}, \quad \mathcal{T} \mathcal{T}^{+} f=f, \quad f \in \mathcal{R}(\mathcal{T}) .
$$

We call the operator $\mathcal{T}^{+}$ the pseudo-inverse of $\mathcal{T}$. This operator is uniquely determined by these properties. In fact, if $\mathcal{T}$ is invertible, then we have $\mathcal{T}^{-1}=\mathcal{T}^{+}$.
\end{definition}
\end{definition}
\begin{theorem}\cite{Douglas} \label{Doglas th} 
 Let $\mathcal{H}$ be a Hilbert space and $\mathcal{T}_{1},\mathcal{T}_{2}\in\mathcal{B}(\mathcal{H})$. The following statements are equivalent:
 \begin{enumerate}
 \item $\mathcal{R}(\mathcal{T}_{1})\subset\mathcal{R}(\mathcal{T}_{2})$
 \item  $\mathcal{T}_{1} \mathcal{T}_{1}^{\ast} \leq \lambda^2 \mathcal{T}_{2}\mathcal{T}_{2}^{\ast}$ for some $\lambda \geq 0$;
 \item $\mathcal{T}_{1} = \mathcal{T}_{2} U$ for some $U\in\mathcal{B}(\mathcal{H})$.
 
 \end{enumerate}
\end{theorem}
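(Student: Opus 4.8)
The plan is to establish the three equivalences through the cycle of implications $(3)\Rightarrow(1)$, $(3)\Rightarrow(2)$, $(2)\Rightarrow(3)$, and $(1)\Rightarrow(3)$; since these yield $(2)\Leftrightarrow(3)$ and $(1)\Leftrightarrow(3)$, all three conditions must coincide. The two implications out of $(3)$ are immediate. If $\mathcal{T}_{1}=\mathcal{T}_{2}U$, then every vector $\mathcal{T}_{1}f=\mathcal{T}_{2}(Uf)$ lies in $\mathcal{R}(\mathcal{T}_{2})$, which gives $(1)$. For $(2)$, I would compute $\mathcal{T}_{1}\mathcal{T}_{1}^{\ast}=\mathcal{T}_{2}UU^{\ast}\mathcal{T}_{2}^{\ast}$ and use $UU^{\ast}\le\|U\|^{2}I$ to obtain $\mathcal{T}_{1}\mathcal{T}_{1}^{\ast}\le\|U\|^{2}\,\mathcal{T}_{2}\mathcal{T}_{2}^{\ast}$, so that $(2)$ holds with $\lambda=\|U\|$.

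The heart of the argument is $(2)\Rightarrow(3)$. Starting from $\mathcal{T}_{1}\mathcal{T}_{1}^{\ast}\le\lambda^{2}\mathcal{T}_{2}\mathcal{T}_{2}^{\ast}$, I would first rewrite the operator inequality in quadratic-form language: for every $f\in\mathcal{H}$,
$$
\|\mathcal{T}_{1}^{\ast}f\|^{2}=\langle\mathcal{T}_{1}\mathcal{T}_{1}^{\ast}f,f\rangle\le\lambda^{2}\langle\mathcal{T}_{2}\mathcal{T}_{2}^{\ast}f,f\rangle=\lambda^{2}\|\mathcal{T}_{2}^{\ast}f\|^{2}.
$$
This norm control is exactly what is needed to define a map $W$ on $\mathcal{R}(\mathcal{T}_{2}^{\ast})$ by $W(\mathcal{T}_{2}^{\ast}f)=\mathcal{T}_{1}^{\ast}f$: the inequality shows both that $W$ is well defined (if $\mathcal{T}_{2}^{\ast}f=\mathcal{T}_{2}^{\ast}g$ then $\mathcal{T}_{1}^{\ast}f=\mathcal{T}_{1}^{\ast}g$) and that $\|W\|\le\lambda$ on $\mathcal{R}(\mathcal{T}_{2}^{\ast})$. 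I would then extend $W$ by continuity to $\overline{\mathcal{R}(\mathcal{T}_{2}^{\ast})}$ and set it equal to $0$ on the orthogonal complement $\mathcal{R}(\mathcal{T}_{2}^{\ast})^{\perp}=\mathcal{N}(\mathcal{T}_{2})$, obtaining $W\in\mathcal{B}(\mathcal{H})$ with $W\mathcal{T}_{2}^{\ast}=\mathcal{T}_{1}^{\ast}$. Taking adjoints yields $\mathcal{T}_{2}W^{\ast}=\mathcal{T}_{1}$, so $U=W^{\ast}$ is the desired factor.

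Finally, for $(1)\Rightarrow(3)$ I would invoke the closed graph theorem. Assuming $\mathcal{R}(\mathcal{T}_{1})\subset\mathcal{R}(\mathcal{T}_{2})$, for each $f$ the equation $\mathcal{T}_{2}g=\mathcal{T}_{1}f$ is solvable; defining $Uf$ to be its unique minimal-norm solution, namely the one lying in $\mathcal{N}(\mathcal{T}_{2})^{\perp}$, gives a linear map satisfying $\mathcal{T}_{2}U=\mathcal{T}_{1}$. A standard closed-graph argument then shows $U$ is bounded: if $f_{n}\to f$ and $Uf_{n}\to h$, then continuity of $\mathcal{T}_{2}$ forces $\mathcal{T}_{2}h=\mathcal{T}_{1}f$, while closedness of $\mathcal{N}(\mathcal{T}_{2})^{\perp}$ forces $h\in\mathcal{N}(\mathcal{T}_{2})^{\perp}$, so $h=Uf$. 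Note that this route avoids assuming $\mathcal{R}(\mathcal{T}_{2})$ is closed, so the pseudo-inverse machinery is not needed here.

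The main obstacle throughout is the construction in $(2)\Rightarrow(3)$. The delicate points are verifying that $W$ is genuinely well defined on the (generally non-closed) range $\mathcal{R}(\mathcal{T}_{2}^{\ast})$ and that extending by zero on $\mathcal{N}(\mathcal{T}_{2})$ preserves the bound, after which the passage to adjoints is routine.
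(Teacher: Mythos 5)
The paper offers no proof of this statement---it is quoted directly from Douglas's 1966 paper \cite{Douglas}---so there is no internal argument to compare against; your proposal is correct and is essentially Douglas's original proof: the quadratic-form inequality in $(2)\Rightarrow(3)$ makes $W(\mathcal{T}_2^{\ast}f)=\mathcal{T}_1^{\ast}f$ well defined and bounded by $\lambda$ on $\mathcal{R}(\mathcal{T}_2^{\ast})$, one extends by continuity to the closure and by zero on $\mathcal{R}(\mathcal{T}_2^{\ast})^{\perp}=\mathcal{N}(\mathcal{T}_2)$, and $U=W^{\ast}$ gives $\mathcal{T}_1=\mathcal{T}_2U$. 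Your closed-graph treatment of $(1)\Rightarrow(3)$ via the unique minimal-norm solution in $\mathcal{N}(\mathcal{T}_2)^{\perp}$ is likewise sound, and, as you observe, it correctly avoids any closed-range hypothesis on $\mathcal{T}_2$, so the implication cycle $(1)\Rightarrow(3)\Rightarrow(2)\Rightarrow(3)\Rightarrow(1)$ establishes all three equivalences.
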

\begin{lemma}\cite{Gaz} \label{Lemma2.5}
Let $\mathcal{T}: \mathcal{H} \rightarrow \mathcal{H}$ be a linear operator, and assume that there exist constants $\alpha, \beta \in[0 ; 1)$ such that
$$
\|\mathcal{T} f-f\| \leq \alpha\|f\|+\beta\|\mathcal{T} f\|, \forall f \in \mathcal{H} .
$$

Then $\mathcal{T} \in \mathcal{B}(\mathcal{H})$, and
$$
\dfrac{1-\alpha}{1+\beta}\|f\| \leq\|\mathcal{T} f\| \leq \dfrac{1+\alpha}{1-\beta}\|f\|,\quad \dfrac{1-\beta}{1+\alpha}\|f\| \leq\left\|\mathcal{T}^{-1} f\right\| \leq \dfrac{1+\beta}{1-\alpha}\|f\|, \forall f \in \mathcal{H} .
$$
\end{lemma}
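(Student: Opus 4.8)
The plan is to extract both norm estimates for $\mathcal{T}$ directly from the hypothesis by the triangle inequality, then upgrade ``bounded below'' to ``invertible'' through a homotopy argument, and finally read off the two bounds for $\mathcal{T}^{-1}$.

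First I would establish boundedness together with the upper estimate: from $\|\mathcal{T}f\|\le\|\mathcal{T}f-f\|+\|f\|\le(1+\alpha)\|f\|+\beta\|\mathcal{T}f\|$ and $\beta<1$ one gets $\|\mathcal{T}f\|\le\frac{1+\alpha}{1-\beta}\|f\|$, so $\mathcal{T}\in\mathcal{B}(\mathcal{H})$. Symmetrically, $\|f\|\le\|f-\mathcal{T}f\|+\|\mathcal{T}f\|\le\alpha\|f\|+(1+\beta)\|\mathcal{T}f\|$ yields the lower estimate $\frac{1-\alpha}{1+\beta}\|f\|\le\|\mathcal{T}f\|$. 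Since $\frac{1-\alpha}{1+\beta}>0$, the operator $\mathcal{T}$ is bounded below, hence injective with closed range by the characterization recalled above.

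The main obstacle is surjectivity: being bounded below does not by itself force $\mathcal{R}(\mathcal{T})=\mathcal{H}$, and the naive attempt---taking $0\ne g\perp\mathcal{R}(\mathcal{T})$, using $\langle\mathcal{T}g,g\rangle=0$ and the identity $\|\mathcal{T}g-g\|^2=\|\mathcal{T}g\|^2+\|g\|^2$---only produces a contradiction when $\alpha^2+\beta^2<1$, which is not guaranteed here. To handle all $\alpha,\beta\in[0,1)$ I would instead run a continuity argument along the segment $\mathcal{T}_t:=(1-t)I+t\mathcal{T}$, $t\in[0,1]$, joining $\mathcal{T}_0=I$ to $\mathcal{T}_1=\mathcal{T}$. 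Writing $\mathcal{T}f=\tfrac1t(\mathcal{T}_tf-(1-t)f)$ for $t>0$ and feeding this into $\|f\|\le\|\mathcal{T}_tf\|+t\|\mathcal{T}f-f\|\le\|\mathcal{T}_tf\|+t\alpha\|f\|+t\beta\|\mathcal{T}f\|$, the $\|\mathcal{T}f\|$ term can be absorbed to give the crucial uniform bound $\|\mathcal{T}_tf\|\ge c\|f\|$ with $c=\frac{1-\max(\alpha,\beta)}{1+\beta}>0$ independent of $t$. The map $t\mapsto\mathcal{T}_t$ is norm-continuous (indeed $\mathcal{T}_t-\mathcal{T}_s=(t-s)(\mathcal{T}-I)$), and every $\mathcal{T}_t$ is bounded below by this same $c$, so wherever invertible it has $\|\mathcal{T}_t^{-1}\|\le c^{-1}$; the standard Neumann-series perturbation of invertibility then shows that $\{t\in[0,1]:\mathcal{T}_t\text{ is invertible}\}$ is open, closed and nonempty (it contains $t=0$). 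By connectedness of $[0,1]$ it equals $[0,1]$, whence $\mathcal{T}=\mathcal{T}_1$ is invertible.

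Finally, having $\mathcal{T}\in\mathrm{GL}(\mathcal{H})$, I would substitute $f=\mathcal{T}^{-1}g$ into the two estimates of the first step: the lower bound $\|\mathcal{T}(\mathcal{T}^{-1}g)\|\ge\frac{1-\alpha}{1+\beta}\|\mathcal{T}^{-1}g\|$ gives $\|\mathcal{T}^{-1}g\|\le\frac{1+\beta}{1-\alpha}\|g\|$, while the upper bound gives $\|\mathcal{T}^{-1}g\|\ge\frac{1-\beta}{1+\alpha}\|g\|$, which are exactly the claimed inequalities for $\mathcal{T}^{-1}$. I expect the uniform lower bound $\|\mathcal{T}_tf\|\ge c\|f\|$ to be the delicate point, since it is precisely what lets the homotopy argument close the gap that the orthogonality computation leaves open.
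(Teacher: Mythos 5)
Your proof is correct, but there is nothing in the paper to compare it against: the paper states Lemma \ref{Lemma2.5} as an imported result, citing \cite{Gaz} (Casazza--Christensen), and gives no proof of its own. So your argument stands or falls on its own merits, and it stands. The two triangle-inequality estimates for $\mathcal{T}$ are exactly right, and the delicate point you flagged checks out: for $t\in(0,1]$, writing $f=\mathcal{T}_t f-t(\mathcal{T}-I)f$ and $t\|\mathcal{T}f\|\le\|\mathcal{T}_t f\|+(1-t)\|f\|$ yields $\|f\|\le(1+\beta)\|\mathcal{T}_t f\|+\bigl(t\alpha+(1-t)\beta\bigr)\|f\|$, and since the convex combination satisfies $t\alpha+(1-t)\beta\le\max(\alpha,\beta)<1$, the uniform lower bound $\|\mathcal{T}_t f\|\ge c\|f\|$ with your $c=\frac{1-\max(\alpha,\beta)}{1+\beta}$ holds for all $t$ (including $t=0$, as $c\le 1$). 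The uniform bound $\|\mathcal{T}_t^{-1}\|\le c^{-1}$ wherever the inverse exists even gives the Neumann-series openness argument a uniform step size $c/\|\mathcal{T}-I\|$, so the set of $t$ for which $\mathcal{T}_t$ is invertible is open, closed, and contains $0$, and the method of continuity closes; the final substitution $f=\mathcal{T}^{-1}g$ correctly inverts the two estimates. Your observation that the naive orthogonal-complement argument (taking $0\ne g\perp\mathcal{R}(\mathcal{T})$, so that $\|\mathcal{T}g-g\|^2=\|\mathcal{T}g\|^2+\|g\|^2$) only produces a contradiction when $\alpha^2+\beta^2<1$ is accurate and well spotted. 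Two small remarks: the appeal to the bounded-below characterization (injectivity and closed range) is actually dispensable, since invertibility comes directly from the clopen argument; and it is worth noting that, despite your aside about orthogonality, your proof never uses the inner product at all, so it establishes the lemma in an arbitrary Banach space --- the same generality in which the cited source proves it.
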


\section{continuous bi-$g$-frames in Hilbert spaces}\label{s3}
In this section, we introduce the concept of a continuous bi-$g$-frame and subsequently establish some of its properties. But first we give the definition of bi-$g$-frame in Hilbert spaces. Throughout the rest of this part (sections \ref{s3},\ref{S4} and \ref{s5}), we denote: $$(\Phi, \Psi)=\left(\left\{\Phi_{\omega}\;:\;\Phi_{\omega}\in\mathcal{B}(\mathcal{H},\mathcal{K}_{\omega})\right\}_{\omega \in \Omega},\left\{\Psi_{\omega}\;:\;\Psi_{\omega}\in\mathcal{B}(\mathcal{H},\mathcal{K}_{\omega})\right\}_{\omega \in \Omega}\right)$$

\begin{definition}\label{c-bi-g-frame}
A pair $(\Phi, \Psi)$ of sequences is called a continuous bi-$g$-frame for $\mathcal{H}$ with respect to $\left\{\mathcal{K}_\omega\right\}_{\omega\in \Omega} $, if 
 \begin{enumerate}
 \item[•]for each $f \in \mathcal{H},\left\{\Phi_\omega f\right\}_{\omega \in \Omega}$ and $\left\{\Psi_\omega f\right\}_{\omega \in \Omega}$ are weakly-measurable.
 
 \item[•] there are two constants $0<A \leq B<\infty$ such that
$$
A\|f\|^2 \leq \int_{\Omega}\langle \Phi_\omega f,\Psi_\omega f \rangle d \mu(\omega) \leq B\|f\|^2, f \in H .
$$
 \end{enumerate}
The constants $A$ and $B$ are called continuous bi-$g$-frame bounds. If $A = B$, then it is called a tight continuous bi-$g$-frame and if $A = B = 1$, then it is called Parseval continuous bi-$g$-frame .
\end{definition}

\begin{remark}
According to Definition \ref{c-bi-g-frame}, the following statements are true for a sequence $\Phi=\left\{\Phi_{\omega}\;:\;\Phi_{\omega}\in\mathcal{B}(\mathcal{H},\mathcal{K}_{\omega})\right\}_{\omega \in \Omega}$ for $\mathcal{H}$ with respect to $\left\{\mathcal{K}_{\omega}\right\}_{\omega \in \Omega} $:
\begin{enumerate}
\item If $(\Phi, \Phi)$ is a continuous bi-$g$-frames for $\mathcal{H}$, then $\Phi$ is a continuous $g$-frame for $\mathcal{H}$.
\item If $(\Phi, C \Phi)$ is a continuous bi-$g$-frames for some $C \in \mathrm{GL}(\mathcal{H})$, then $\Phi$ is a continuous $C$-controlled $g$-frame for $\mathcal{H}$.
\item If $(C_{1} \Phi, C_{2} \Phi)$ is a continuous bi-$g$-frames for some $C_{1}$ and $C_{2}$ in $\mathrm{GL}(\mathcal{H})$, then $\Phi$ is a continuous  $(C_{1}, C_{2})$-controlled $g$-frame for $\mathcal{H}$.
\end{enumerate}
\end{remark}
\begin{example}
Let $\mathcal{H}$ be a separable Hilbert space and $(\left\{\alpha_\omega\right\}_{\omega \in \Omega},\left\{\beta_\omega\right\}_{\omega \in \Omega})$ be a biframe with bounds $A$ and $B$. Let $\Lambda_{\alpha_\omega}$ and $\Gamma_{\beta_\omega}$ be the functional induced by:
$$
\left\{\begin{array}{l}
\Phi_{\alpha_\omega}(f)=\left\langle f, \alpha_\omega\right\rangle \\
\Psi_{\beta_\omega}(f)=\left\langle \beta_\omega,f \right\rangle
\end{array}\right.
$$
Then for every $f\in \mathcal{H},$ we have
$$
A\|f\|^2 \leq \int_{\Omega}\langle \Phi_\omega f,\Psi_\omega f \rangle d \mu(\omega) = \int_{\Omega}\left\langle f, \alpha_\omega\right\rangle \left\langle \beta_\omega,f \right\rangle  d \mu(\omega) \leq B\|f\|^2, f \in \mathcal{H} .
$$
Hence $\left(\Phi_{\alpha}, \Psi_{\beta}\right)=\left(\left\{\Phi_{\alpha_\omega}\right\}_{\omega \in \Omega},\left\{\Psi_{\beta_\omega}\right\}_{\omega \in \Omega}\right)$ is a continuous bi- $g$-frame for $\mathcal{H}$ .
\end{example}
\begin{theorem}
  $(\Phi, \Psi)$ is a continuous bi-$g$-frames if and only if $(\Psi, \Phi)=\left(\left\{\Psi_{\omega}\right\}_{\omega \in \Omega},\left\{\Phi_{\omega}\right\}_{\omega \in \Omega}\right)$ is a continuous bi-$g$-frames.
\end{theorem}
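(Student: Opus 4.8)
The plan is to exploit the conjugate symmetry of the inner product, which immediately interchanges the two integrals appearing in the frame inequality. Note first that the definition of a continuous bi-$g$-frame in Definition \ref{c-bi-g-frame} has two ingredients: the weak-measurability of $\{\Phi_\omega f\}_{\omega\in\Omega}$ and $\{\Psi_\omega f\}_{\omega\in\Omega}$, and the two-sided bound on $\int_\Omega\langle\Phi_\omega f,\Psi_\omega f\rangle\,d\mu(\omega)$. The measurability condition is visibly symmetric in $\Phi$ and $\Psi$, so it carries over to the pair $(\Psi,\Phi)$ without any additional work. Hence the entire content of the theorem is the behaviour of the frame inequality under the swap, and by symmetry it suffices to prove a single implication.

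For the frame inequality, I would start from the pointwise identity
$$
\langle\Psi_\omega f,\Phi_\omega f\rangle=\overline{\langle\Phi_\omega f,\Psi_\omega f\rangle},\qquad \omega\in\Omega,\ f\in\mathcal{H},
$$
which is just conjugate symmetry in the Hilbert space $\mathcal{K}_\omega$. Integrating over $\Omega$ and pulling the complex conjugation through the integral gives
$$
\int_{\Omega}\langle\Psi_\omega f,\Phi_\omega f\rangle\,d\mu(\omega)=\overline{\int_{\Omega}\langle\Phi_\omega f,\Psi_\omega f\rangle\,d\mu(\omega)}.
$$
The crucial observation is that, since $(\Phi,\Psi)$ is a continuous bi-$g$-frame, the quantity $\int_{\Omega}\langle\Phi_\omega f,\Psi_\omega f\rangle\,d\mu(\omega)$ is squeezed between the real numbers $A\|f\|^2$ and $B\|f\|^2$; therefore it is real and equals its own conjugate. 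Combining the two displays yields
$$
\int_{\Omega}\langle\Psi_\omega f,\Phi_\omega f\rangle\,d\mu(\omega)=\int_{\Omega}\langle\Phi_\omega f,\Psi_\omega f\rangle\,d\mu(\omega),
$$
so the very same bounds $A$ and $B$ validate the frame inequality for $(\Psi,\Phi)$.

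With both ingredients in place, $(\Psi,\Phi)$ is a continuous bi-$g$-frame with the same bounds, and the converse is obtained by interchanging the roles of $\Phi$ and $\Psi$. I do not anticipate any genuine obstacle: the only point requiring a moment of care is the justification that the integral is real-valued, which is forced by the defining inequality rather than assumed separately, and this is precisely the hinge of the argument.
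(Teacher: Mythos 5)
Your proposal is correct and follows essentially the same route as the paper: both rest on conjugate symmetry of the inner product together with the fact that the defining inequality forces the integral $\int_{\Omega}\langle\Phi_\omega f,\Psi_\omega f\rangle\,d\mu(\omega)$ to be real, hence equal to its own conjugate. If anything, your write-up is slightly more careful than the paper's, since you make the realness argument explicit and note that the weak-measurability condition is symmetric in $\Phi$ and $\Psi$, points the paper's proof passes over silently.
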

\begin{proof}
Let $(\Phi, \Psi)$ be a continuous bi-$g$-frames with bounds $A$ and $B$. Then, for every $f \in \mathcal{H}$,
$$
A\left\|f\right\|^2\leq \int_{\Omega}\langle \Phi_\omega f,\Psi_\omega f \rangle d \mu(\omega) \leq B\left\| f \right\|^2.
$$

Now, we can write
$$
\begin{aligned}
\int_{\Omega}\langle \Phi_\omega f,\Psi_\omega f \rangle d \mu(\omega)&=\overline{\int_{\Omega}\langle \Phi_\omega f,\Psi_\omega f \rangle d \mu(\omega)}\\
&=\int_{\Omega}\overline{\langle \Phi_{\omega} f,\Psi_{\omega} f \rangle}d \mu(\omega)\\ &=\int_{\Omega}\langle  \Psi_{\omega}f,\Phi_{\omega} f \rangle d \mu(\omega).
\end{aligned}
$$
Therefore
$$
A\left\| f \right\|^2 \leq \int_{\Omega}\langle \Psi_\omega f,\Phi_\omega f \rangle d \mu(\omega) \leq B\left\| f \right\|^2 .
$$

This implies that, $(\Psi, \Phi)$ is a continuous bi-$g$-frames with bounds $A$ and $B$. The reverse of this statement can be proved similarly..
\end{proof}
\section{Continuous bi-$g$-frame Operator on Hilbert spaces } \label{S4}

Let $\Phi=\left\{\Phi_{\omega}\;:\;\Phi_{\omega}\in\mathcal{B}(\mathcal{H},\mathcal{K}_{\omega})\right\}_{\omega \in \Omega}$ and $\Psi=\left\{\Psi_{\omega}\;:\;\Psi_{\omega}\in\mathcal{B}(\mathcal{H},\mathcal{K}_{\omega})\right\}_{\omega \in \Omega}$ are two sequence for $\mathcal{H}$ with respect to $\mathcal{K}_{\omega}$. Assume that $(\Phi, \Psi)$ is a continuous bi-$g$-frame for $\mathcal{H}$, we define the continuous bi-$g$-frame operator $S_{\Phi,\Psi}$ as follows: 

$$S_{\Phi,\Psi}: \mathcal{H}\longrightarrow\mathcal{H}, \quad\quad\quad S_{\Phi,\Psi} f=\int_\Omega \Psi_\omega^* \Phi_\omega f d \mu(\omega) $$
\begin{proposition}
The operator $S_{\Phi,\Psi}$ is  selfadjoint, bounded, positive, invertible and $\left\|S_{\Phi,\Psi}^{-1}\right\| \leq \dfrac{1}{A}.$
\end{proposition}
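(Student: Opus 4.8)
The plan is to deduce every clause of the statement from a single scalar identity. Writing the defining integral against a test vector gives
$$\langle S_{\Phi,\Psi} f, f\rangle = \int_\Omega \langle \Psi_\omega^\ast \Phi_\omega f, f\rangle\, d\mu(\omega) = \int_\Omega \langle \Phi_\omega f, \Psi_\omega f\rangle\, d\mu(\omega),$$
so that the frame inequality in Definition \ref{c-bi-g-frame} reads
$$A\|f\|^2 \leq \langle S_{\Phi,\Psi} f, f\rangle \leq B\|f\|^2, \qquad f \in \mathcal{H}.$$
The first thing I would extract is that $\langle S_{\Phi,\Psi} f, f\rangle$ is real and nonnegative for every $f$, since it is sandwiched between the real numbers $A\|f\|^2$ and $B\|f\|^2$. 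Real-valuedness of the quadratic form together with the polarization identity yields $\langle S_{\Phi,\Psi} f, g\rangle = \langle f, S_{\Phi,\Psi} g\rangle$ for all $f,g$, so $S_{\Phi,\Psi}$ is selfadjoint, and nonnegativity of the form then makes it positive.

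For boundedness I would invoke that, $S_{\Phi,\Psi}$ being selfadjoint, its operator norm coincides with the supremum of its quadratic form over the unit sphere; the upper estimate then forces $\|S_{\Phi,\Psi}\| \leq B$. The delicate point here, which deserves the most care, is the well-definedness of the integral: one must first know that $\int_\Omega \Psi_\omega^\ast \Phi_\omega f\, d\mu(\omega)$ converges to an element of $\mathcal{H}$ so that $S_{\Phi,\Psi}$ is an everywhere-defined linear map. Once that is granted, symmetry on all of $\mathcal{H}$ makes boundedness automatic; a direct norm estimate is awkward because the bi-$g$-frame bounds only control the mixed form $\int_\Omega \langle \Phi_\omega f, \Psi_\omega f\rangle\, d\mu(\omega)$ and not the two one-sided Bessel integrals separately.

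For invertibility I would pass from the lower estimate to a genuine lower bound on the operator: by the Cauchy--Schwarz inequality,
$$A\|f\|^2 \leq \langle S_{\Phi,\Psi} f, f\rangle \leq \|S_{\Phi,\Psi} f\|\,\|f\|,$$
whence $A\|f\| \leq \|S_{\Phi,\Psi} f\|$ for all $f$. By the characterization that an operator is injective with closed range precisely when it is bounded below (recalled in the preliminaries), $S_{\Phi,\Psi}$ is injective with closed range; selfadjointness then gives $\mathcal{R}(S_{\Phi,\Psi})^\perp = \mathcal{N}(S_{\Phi,\Psi}) = \{0\}$, so the range is dense and, being closed, equals $\mathcal{H}$. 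Thus $S_{\Phi,\Psi}$ is bijective and, being bounded below, has a bounded inverse. Finally, substituting $f = S_{\Phi,\Psi}^{-1} g$ into $A\|f\| \leq \|S_{\Phi,\Psi} f\|$ gives $A\|S_{\Phi,\Psi}^{-1} g\| \leq \|g\|$, that is, $\|S_{\Phi,\Psi}^{-1}\| \leq \tfrac{1}{A}$. The main obstacle throughout is the boundedness/well-definedness step, since one is forced to route boundedness through selfadjointness and the quadratic form rather than through a naive Cauchy--Schwarz bound on $\langle S_{\Phi,\Psi} f, g\rangle$, which would require separate Bessel bounds that the definition does not provide.
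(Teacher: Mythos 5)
Your proof is correct, and in two places it takes a genuinely different (and tighter) route than the paper. For selfadjointness, the paper computes $\langle S_{\Phi,\Psi}f,g\rangle=\langle f,S_{\Psi,\Phi}g\rangle$, which only identifies the adjoint as the \emph{swapped} operator $S_{\Psi,\Phi}$, and then asserts selfadjointness; your argument supplies exactly the missing ingredient, namely that the frame inequality forces the quadratic form to be real, whence polarization gives $S_{\Phi,\Psi}^{\ast}=S_{\Phi,\Psi}$ (equivalently $S_{\Psi,\Phi}=S_{\Phi,\Psi}$). For invertibility, the paper argues from $0\le A^{-1}S_{\Phi,\Psi}-I\le \frac{B-A}{A}I$ that $\left\|A^{-1}S_{\Phi,\Psi}-I\right\|\le 1$ and concludes invertibility; as written this is insufficient, since the Neumann-series argument needs the norm to be \emph{strictly} less than $1$, and $\frac{B-A}{A}<1$ holds only when $B<2A$ (the standard repair is to use $\left\|I-\frac{2}{A+B}S_{\Phi,\Psi}\right\|\le\frac{B-A}{B+A}<1$). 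Your route — the Cauchy--Schwarz lower bound $A\|f\|\le\|S_{\Phi,\Psi}f\|$, injectivity and closed range via the bounded-below characterization recalled in the preliminaries, and density of the range from $\mathcal{R}(S_{\Phi,\Psi})^{\perp}=\mathcal{N}(S_{\Phi,\Psi})=\{0\}$ — is airtight with no restriction on $B/A$, and it feeds directly into the same substitution $f=S_{\Phi,\Psi}^{-1}g$ that both you and the paper use for $\|S_{\Phi,\Psi}^{-1}\|\le\frac{1}{A}$. The trade-off: the paper's operator-interval argument is shorter when it works (or once rescaled), while yours is the robust standard argument. One caveat applies to both: the well-definedness of the weak integral defining $S_{\Phi,\Psi}$ is assumed rather than proved (the bi-$g$-frame condition controls only the mixed form, not the two Bessel integrals separately, as you rightly note); you at least flag this and route boundedness through symmetry of an everywhere-defined operator (Hellinger--Toeplitz), which you could cite by name to close that step cleanly, whereas the paper silently uses $\|S_{\Phi,\Psi}\|=\sup_{\|f\|\le 1}|\langle S_{\Phi,\Psi}f,f\rangle|$, a formula that already presupposes the selfadjointness and boundedness being established.
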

\begin{proof}
For any $f, g \in \mathcal{H}$ we have :
$$
\begin{aligned}
\langle S_{\Phi,\Psi} f, g\rangle & =\left\langle\int_\Omega \Psi_\omega^* \Phi_\omega f d \mu(\omega), g\right\rangle\\ &=\int_\Omega\left\langle\Psi_\omega^* \Phi_\omega f, g\right\rangle d \mu(\omega) \\
& =\int_\Omega\left\langle f, \Phi_\omega^* \Psi_\omega g\right\rangle d \mu(\omega)\\ &=\left\langle f, \int_\Omega \Phi_\omega^* \Psi_\omega g d \mu(\omega)\right\rangle \\
& =\langle f, S_{\Psi,\Phi} g\rangle .
\end{aligned}
$$
Hence $S_{\Phi,\Psi}$ is  selfadjoint.
we have
$$
\begin{aligned}
\langle S_{\Phi,\Psi} f, f\rangle & =\left\langle\int_\Omega \Psi_\omega^* \Phi_\omega f d \mu(\omega), f\right\rangle\\ &=\int_\Omega\left\langle\Psi_\omega^* \Phi_\omega f, f\right\rangle d \mu(\omega)\\ &=\int_\Omega\left\langle \Phi_\omega f, \Psi f\right\rangle d \mu(\omega).
\end{aligned}
$$
Now we show that $S_{\Phi,\Psi}$ is a bounded operator
$$
\|S_{\Phi,\Psi}\|=\sup _{\|f\| \leq 1}\|\langle S_{\Phi,\Psi} f, f\rangle\|=\sup _{\|f\| \leq 1}\left\|\int_\Omega\left\langle\Phi_\omega f, \Psi_\omega f\right\rangle d \mu(\omega)\right\| \leq B .
$$
So
$$
A\langle f, f\rangle \leq\langle S_{\Phi,\Psi} f, f\rangle \leq B\langle f, f\rangle
$$
Therefore $$A I \leq S_{\Phi,\Psi} \leq B I,$$ Hence $S_{\Phi,\Psi}$ is a positive operator. Furthermore, $$0 \leq A^{-1} S_{\Phi,\Psi}-I \leq((B-A) / A) I.$$ So 
$$\left\|A^{-1} S_{\Phi,\Psi}-I\right\| \leq 1$$ which implies that $S_{\Phi,\Psi}$ is invertible.
Moreover we have 
$$A\|f\|^2 \leq\left\langle S_{\Lambda, \Gamma}(f), f\right\rangle \leq\left\|S_{\Phi,\Psi}(f)\right\|\|f\|, \quad(f \in \mathcal{H}).$$
Then
$$A\|f\| \leq\left\|S_{\Phi,\Psi}(f)\right\|, \quad(f \in \mathcal{H}).$$
Then
$$A\left\|S_{\Phi,\Psi}^{-1}(f)\right\| \leq\|f\|, \quad(f \in \mathcal{H}).$$
Hence
$$\left\|S_{\Phi,\Psi}^{-1}\right\| \leq \dfrac{1}{A}.$$
\end{proof}
The following theorem demonstrates that continuous bi-$g$-frames may also be used to accomplish the major feat of element reconstruction, which is one of the primary achievements of frames.
\begin{theorem}\label{th42}
Let $(\Phi, \Psi)$ be a continuous bi-$g$-frame for $\mathcal{H}$ with respect to $\left\{\mathcal{K}_\omega\right\}_{\omega \in \Omega}$ with continuous bi-$g$-frame operator $S_{\Phi, \Psi}$. Then, for every $f \in \mathcal{H}$, the following reconstruction formulas holds:
\begin{enumerate}
\item $f=\int_{\Omega} \Psi_\omega^* \Phi_\omega S_{\Phi, \Psi}^{-1} f d \mu(\omega)$
\item $f=\int_{\Omega}\left(\Psi_\omega S_{\Psi ,\Phi}^{-1}\right)^* \Phi_\omega f d \mu(\omega)$
\end{enumerate}
\end{theorem}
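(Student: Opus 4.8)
The plan is to exploit the invertibility of the continuous bi-$g$-frame operator together with the adjoint relation $S_{\Phi,\Psi}^{*}=S_{\Psi,\Phi}$ already recorded in the proof of the preceding proposition; both formulas then collapse to the identity $S_{\Phi,\Psi}S_{\Phi,\Psi}^{-1}=S_{\Phi,\Psi}^{-1}S_{\Phi,\Psi}=I$.

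For the first formula, I would start from the previous proposition, which guarantees $S_{\Phi,\Psi}\in\mathrm{GL}(\mathcal H)$, and simply apply the defining integral of $S_{\Phi,\Psi}$ to the vector $S_{\Phi,\Psi}^{-1}f$. Indeed,
$$
\int_{\Omega}\Psi_\omega^{*}\Phi_\omega S_{\Phi,\Psi}^{-1}f\,d\mu(\omega)=S_{\Phi,\Psi}\bigl(S_{\Phi,\Psi}^{-1}f\bigr)=f ,
$$
for every $f\in\mathcal H$, which is exactly $(1)$.

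For the second formula, the key is to rewrite the operator $\bigl(\Psi_\omega S_{\Psi,\Phi}^{-1}\bigr)^{*}$. Since $S_{\Phi,\Psi}$ is invertible, so is its adjoint $S_{\Phi,\Psi}^{*}=S_{\Psi,\Phi}$, and taking the adjoint of a product gives
$$
\bigl(\Psi_\omega S_{\Psi,\Phi}^{-1}\bigr)^{*}=\bigl(S_{\Psi,\Phi}^{-1}\bigr)^{*}\Psi_\omega^{*}=\bigl(S_{\Psi,\Phi}^{*}\bigr)^{-1}\Psi_\omega^{*}=S_{\Phi,\Psi}^{-1}\Psi_\omega^{*}.
$$
Substituting this into the integral and pulling the bounded operator $S_{\Phi,\Psi}^{-1}$ outside, I obtain
$$
\int_{\Omega}\bigl(\Psi_\omega S_{\Psi,\Phi}^{-1}\bigr)^{*}\Phi_\omega f\,d\mu(\omega)=S_{\Phi,\Psi}^{-1}\!\int_{\Omega}\Psi_\omega^{*}\Phi_\omega f\,d\mu(\omega)=S_{\Phi,\Psi}^{-1}S_{\Phi,\Psi}f=f ,
$$
which proves $(2)$.

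The only genuine point needing care, and the step I expect to be the main obstacle, is the interchange of the bounded operator $S_{\Phi,\Psi}^{-1}$ with the weak, vector-valued integral defining the frame operator. I would justify it by testing against an arbitrary $g\in\mathcal H$: using the weak measurability from Definition \ref{c-bi-g-frame} together with the boundedness of $S_{\Phi,\Psi}^{-1}$, one has $\langle S_{\Phi,\Psi}^{-1}\int_\Omega \Psi_\omega^{*}\Phi_\omega f\,d\mu(\omega),g\rangle=\int_\Omega\langle \Psi_\omega^{*}\Phi_\omega f,(S_{\Phi,\Psi}^{-1})^{*}g\rangle\,d\mu(\omega)$, and since $g$ is arbitrary the operator may legitimately be moved under the integral sign; the same device validates applying $S_{\Phi,\Psi}$ to $S_{\Phi,\Psi}^{-1}f$ inside formula $(1)$. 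Everything else is a routine application of invertibility and the adjoint identity.
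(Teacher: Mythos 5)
Your proposal is correct and takes essentially the same approach as the paper: part (1) is just $S_{\Phi,\Psi}S_{\Phi,\Psi}^{-1}f=f$ expressed through the defining integral, and part (2) rests, exactly as in the paper's proof, on the identity $\bigl(\Psi_\omega S_{\Psi,\Phi}^{-1}\bigr)^{*}=S_{\Phi,\Psi}^{-1}\Psi_\omega^{*}$ (via $S_{\Phi,\Psi}^{*}=S_{\Psi,\Phi}$) together with moving $S_{\Phi,\Psi}^{-1}$ inside the integral. Your closing justification of that interchange by pairing against an arbitrary $g\in\mathcal{H}$ is a detail the paper leaves implicit, but it does not change the argument.
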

\begin{proof}
(1) For every $f\in \mathcal{H}$, we have $$f=S_{\Phi, \Psi} S_{\Phi, \Psi}^{-1} f=\int_{\Omega} \Psi_\omega^* \Phi_\omega S_{\Phi, \Psi}^{-1} f d \mu(\omega),
$$

(2)  For every $f\in \mathcal{H}$, we have $$f=S_{\Phi, \Psi}^{-1} S_{\Phi, \Psi} f=S_{\Phi, \Psi}^{-1} \int_{\Omega} \Psi_\omega^* \Phi_\omega f d \mu(\omega)=\int_{\Omega} S_{\Phi, \Psi}^{-1} \Psi_\omega^* \Phi_\omega f d \mu(\omega)=\int_{\Omega}\left(\Psi_\omega S_{\Psi ,\Phi}^{-1}\right)^* \Phi_\omega f d \mu(\omega).$$
\end{proof}
let $\tilde{\Phi}_\omega=\Phi_\omega S_{\Phi, \Psi}^{-1}$ and $\tilde{\Psi}_\omega=\Psi_\omega S_{\Psi, \Phi}^{-1}$. Therefore the equalities in the theorem \ref{th42} will be of this form

(1) $f=\int_{\Omega} \Psi_\omega^* \tilde{\Phi}_\omega f d \mu(\omega) $,

(2) $f=\int_{\Omega}\left(\tilde{\Psi}_\omega\right)^* \Phi_\omega f d \mu(\omega) $.

\begin{lemma}
 Let $(\Phi, \Psi)$ be a continuous bi-g-frame for $\mathcal{H}$ with respect to $\left\{\mathcal{K}_\omega\right\}_{\omega \in \Omega}$ with continuous bi-g-frame operator $S_{\Phi, \Psi}$. Then, $(\tilde{\Phi}, \tilde{\Psi})=\left(\left\{\tilde{\Phi}_\omega\right\}_{\omega \in \Omega},\left\{\tilde{\Psi}_\omega\right\}_{\omega \in \Omega}\right)$ is a bi-g-Bessel sequence for $\mathcal{H}$ with respect to $\left\{\mathcal{K}_\omega\right\}_{\omega \in \Omega}$ with continuous $g$-Bessel bound $\dfrac{1}{A}$.
\end{lemma}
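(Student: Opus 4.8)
The plan is to evaluate the canonical bilinear quantity $\int_{\Omega}\langle \tilde{\Phi}_\omega f, \tilde{\Psi}_\omega f\rangle\, d\mu(\omega)$ directly and collapse it into a single inner product involving $S_{\Phi,\Psi}^{-1}$, from which the bound $1/A$ drops out of the operator inequality $A I \le S_{\Phi,\Psi}$ established in the Proposition. The two facts I would extract from that Proposition are: (i) the defining identity $\langle S_{\Phi,\Psi} g, h\rangle = \int_{\Omega}\langle \Phi_\omega g, \Psi_\omega h\rangle\, d\mu(\omega)$ for all $g,h\in\mathcal H$, which is exactly the first computation in its proof; and (ii) that $S_{\Phi,\Psi}$ is selfadjoint and positive with $S_{\Psi,\Phi}=S_{\Phi,\Psi}^{*}=S_{\Phi,\Psi}$, so that in particular $S_{\Psi,\Phi}^{-1}=S_{\Phi,\Psi}^{-1}$.

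First I would dispose of the measurability requirement: for each $f$, the families $\{\tilde{\Phi}_\omega f\}_\omega=\{\Phi_\omega(S_{\Phi,\Psi}^{-1}f)\}_\omega$ and $\{\tilde{\Psi}_\omega f\}_\omega=\{\Psi_\omega(S_{\Psi,\Phi}^{-1}f)\}_\omega$ are weakly measurable, since they are just the original weakly measurable families of $(\Phi,\Psi)$ evaluated at the fixed vectors $S_{\Phi,\Psi}^{-1}f$ and $S_{\Psi,\Phi}^{-1}f$, and precomposition by a fixed bounded operator preserves weak measurability.

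Next comes the main computation. Substituting $\tilde{\Phi}_\omega=\Phi_\omega S_{\Phi,\Psi}^{-1}$ and $\tilde{\Psi}_\omega=\Psi_\omega S_{\Psi,\Phi}^{-1}$ and writing $g=S_{\Phi,\Psi}^{-1}f$, $h=S_{\Psi,\Phi}^{-1}f$, identity (i) yields
$$
\begin{aligned}
\int_{\Omega}\langle \tilde{\Phi}_\omega f, \tilde{\Psi}_\omega f\rangle\, d\mu(\omega) &= \int_{\Omega}\langle \Phi_\omega g, \Psi_\omega h\rangle\, d\mu(\omega) = \langle S_{\Phi,\Psi} g, h\rangle \\
&= \langle S_{\Phi,\Psi} S_{\Phi,\Psi}^{-1} f, S_{\Psi,\Phi}^{-1} f\rangle = \langle f, S_{\Psi,\Phi}^{-1} f\rangle.
\end{aligned}
$$
Using (ii) to replace $S_{\Psi,\Phi}^{-1}$ by $S_{\Phi,\Psi}^{-1}$, together with the selfadjointness of $S_{\Phi,\Psi}^{-1}$, this equals $\langle S_{\Phi,\Psi}^{-1} f, f\rangle$, which is real and nonnegative because $S_{\Phi,\Psi}^{-1}$ is positive. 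Finally, from $A I \le S_{\Phi,\Psi}$ one obtains $S_{\Phi,\Psi}^{-1} \le \tfrac{1}{A} I$, whence $\langle S_{\Phi,\Psi}^{-1} f, f\rangle \le \tfrac{1}{A}\|f\|^2$, which is precisely the upper Bessel bound $1/A$ claimed.

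The only delicate points — and thus where I would be most careful — are the bookkeeping between $S_{\Phi,\Psi}$ and $S_{\Psi,\Phi}$, whose coincidence rests on the selfadjointness asserted in the Proposition, and the interchange of the integral with the inner product in identity (i), which is legitimate exactly because that identity was already established for arbitrary arguments inside the proof of the Proposition. No genuinely new estimate is required beyond the spectral inequality $S_{\Phi,\Psi}^{-1}\le A^{-1}I$; the entire content is the algebraic collapse of the integral onto $S_{\Phi,\Psi}^{-1}$. I would also remark in passing that the same argument with $S_{\Phi,\Psi}\le B I$ gives the lower estimate $\tfrac{1}{B}\|f\|^2\le \langle S_{\Phi,\Psi}^{-1}f,f\rangle$, so $(\tilde\Phi,\tilde\Psi)$ is in fact a continuous bi-$g$-frame, though only the Bessel bound is needed here.
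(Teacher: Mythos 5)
Your proposal is correct and follows essentially the same route as the paper: substitute $\tilde{\Phi}_\omega=\Phi_\omega S_{\Phi,\Psi}^{-1}$, $\tilde{\Psi}_\omega=\Psi_\omega S_{\Psi,\Phi}^{-1}$, collapse the integral onto $\left\langle S_{\Phi,\Psi}S_{\Phi,\Psi}^{-1}f,\,S_{\Psi,\Phi}^{-1}f\right\rangle=\left\langle f,\,S_{\Psi,\Phi}^{-1}f\right\rangle$, and bound this by $\tfrac{1}{A}\|f\|^2$. In fact you are slightly more careful than the paper, which asserts the final inequality without comment: your justification via $S_{\Psi,\Phi}^{-1}=S_{\Phi,\Psi}^{-1}$ (selfadjointness) and the spectral inequality $S_{\Phi,\Psi}^{-1}\le A^{-1}I$, together with the measurability check, fills in exactly the steps the paper leaves implicit.
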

\begin{proof}
For every $f \in \mathcal{H}$, we have
$$
\begin{aligned}
\int_{\Omega}\left\langle\tilde{\Phi}_\omega f, \tilde{\Psi}_\omega f\right\rangle d \mu(\omega) & =\int_{\Omega}\left\langle\Phi_\omega S_{\Phi, \Psi}^{-1} f, \Psi_\omega S_{\Psi, \Phi}^{-1} f\right\rangle d \mu(\omega) \\
& =\left\langle\int_{\Omega} \Psi_\omega^* \Phi_\omega S_{\Phi, \Psi}^{-1} f d \mu(\omega), S_{\Psi, \Phi}^{-1} f\right\rangle  \\
& =\left\langle S_{\Phi, \Psi} S_{\Phi, \Psi}^{-1} f, S_{\Psi, \Phi}^{-1} f\right\rangle  \\
& =\left\langle f, S_{\Psi, \Phi}^{-1} f\right\rangle \\
& \leq \dfrac{1}{A}\|f\|^2
\end{aligned}
$$
\end{proof}

\section{Stability of continuous bi-$g$-Bessel sequence for Hilbert spaces}\label{s5}
\begin{theorem} \label{th51}
Let $\Phi=\left\{\Phi_{\omega}\;:\;\Phi_{\omega}\in\mathcal{B}(\mathcal{H},\mathcal{K}_{\omega})\right\}_{\omega \in \Omega}$ and $\Psi=\left\{\Psi_{\omega}\;:\;\Psi_{\omega}\in\mathcal{B}(\mathcal{H},\mathcal{K}_{\omega})\right\}_{\omega \in \Omega}$ are two $g$-Bessel sequences with bounds $B_{\Phi}$, $B_{\Psi}$ respectively. Assume that $(\Phi, \Psi)$ be a continuous bi-$g$-fra for $\mathcal{H}$ with respect to $\left\{\mathcal{K}_{\omega}\right\}_{\omega \in \Omega}$ with bound $B$ and $\left(\left\{\Lambda_{\omega}\right\}_{\omega \in \Omega},\left\{\Gamma_{\omega}\right\}_{\omega \in \Omega}\right)$ be a pair of sequences for $\mathcal{H}$ with respect to $\left\{\mathcal{K}_{\omega}\right\}_{\omega \in \Omega}$. If there exist constants $\alpha, \beta, \gamma \in[0,1)$ such that $\max \left\{\alpha+\gamma , \beta\right\}<1$ and
$$
\begin{aligned}
\left\|\int_{\Omega}\left(\Psi_{\omega}^* \Phi_{\omega}-\Gamma_{\omega}^* \Lambda_{\omega}\right) f\right\| 
\leq & \alpha\left\|\int_{\Omega} \Psi_{\omega}^* \Phi_{\omega} f d \mu(\omega)\right\|+\beta\left\|\int_{\Omega} \Gamma_{\omega}^* \Lambda_{\omega} f d \mu(\omega)\right\|+\gamma\left\| f\right\|.
\end{aligned}
$$
Then  $\left(\left\{\Lambda_{\omega}\right\}_{\omega \in \Omega},\left\{\Gamma_{\omega}\right\}_{\omega \in \Omega}\right)$ is a continuous bi-$g$-Bessel for $\mathcal{H}$ with respect to $\left\{\mathcal{K}_{\omega}\right\}_{\omega \in \Omega}$ with bound
$$
 \dfrac{\left(1+\alpha\right) \sqrt{B_{\Phi}B_{\Psi}}+\gamma}{1-\beta}.
$$
\end{theorem}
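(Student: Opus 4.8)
The plan is to reduce the bi-$g$-Bessel condition for $(\{\Lambda_\omega\}, \{\Gamma_\omega\})$ to a single norm estimate on the associated operator $S_{\Lambda,\Gamma} f = \int_\Omega \Gamma_\omega^* \Lambda_\omega f \, d\mu(\omega)$. The point is that the bi-$g$-Bessel quantity can be rewritten as $\int_\Omega \langle \Lambda_\omega f, \Gamma_\omega f\rangle d\mu(\omega) = \langle S_{\Lambda,\Gamma} f, f\rangle$, so a single Cauchy--Schwarz step gives $\left|\int_\Omega \langle \Lambda_\omega f, \Gamma_\omega f\rangle d\mu(\omega)\right| \leq \|S_{\Lambda,\Gamma} f\|\,\|f\|$. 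Hence it suffices to prove the operator estimate $\|S_{\Lambda,\Gamma} f\| \leq C\|f\|$ with $C = \frac{(1+\alpha)\sqrt{B_\Phi B_\Psi}+\gamma}{1-\beta}$, and the claimed Bessel bound follows at once.

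The first genuine step, and the one I expect to carry the analytic weight, is to bound $\|S_{\Phi,\Psi} f\|$ in terms of the two individual $g$-Bessel bounds. For arbitrary $g \in \mathcal{H}$ I would write $\langle S_{\Phi,\Psi} f, g\rangle = \int_\Omega \langle \Phi_\omega f, \Psi_\omega g\rangle d\mu(\omega)$ and apply Cauchy--Schwarz twice: first pointwise inside the integral to get $\int_\Omega \|\Phi_\omega f\|\,\|\Psi_\omega g\|\, d\mu(\omega)$, then the integral Cauchy--Schwarz to split this into $\left(\int_\Omega \|\Phi_\omega f\|^2 d\mu(\omega)\right)^{1/2}\left(\int_\Omega \|\Psi_\omega g\|^2 d\mu(\omega)\right)^{1/2}$. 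The two $g$-Bessel hypotheses bound these factors by $\sqrt{B_\Phi}\,\|f\|$ and $\sqrt{B_\Psi}\,\|g\|$ respectively, so taking the supremum over $\|g\| \leq 1$ yields $\|S_{\Phi,\Psi} f\| \leq \sqrt{B_\Phi B_\Psi}\,\|f\|$.

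With this estimate secured, the remainder is purely algebraic. The perturbation hypothesis reads $\|S_{\Phi,\Psi} f - S_{\Lambda,\Gamma} f\| \leq \alpha\|S_{\Phi,\Psi} f\| + \beta\|S_{\Lambda,\Gamma} f\| + \gamma\|f\|$. I would apply the triangle inequality in the form $\|S_{\Lambda,\Gamma} f\| \leq \|S_{\Phi,\Psi} f\| + \|S_{\Phi,\Psi} f - S_{\Lambda,\Gamma} f\|$, substitute the hypothesis, collect the $\|S_{\Lambda,\Gamma} f\|$ terms on the left to obtain $(1-\beta)\|S_{\Lambda,\Gamma} f\| \leq (1+\alpha)\|S_{\Phi,\Psi} f\| + \gamma\|f\|$, and finally insert the bound $\|S_{\Phi,\Psi} f\| \leq \sqrt{B_\Phi B_\Psi}\,\|f\|$. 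Since $\beta < 1$ the factor $1-\beta$ is strictly positive, and dividing through produces exactly the claimed constant $C$.

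The main obstacle is the opening estimate: recognizing that the two $g$-Bessel bounds $B_\Phi$ and $B_\Psi$ must be combined \emph{multiplicatively} as $\sqrt{B_\Phi B_\Psi}$ via the double Cauchy--Schwarz argument, rather than additively. Once this operator-norm inequality is in place, the perturbation step is a routine rearrangement; the condition $\max\{\alpha+\gamma, \beta\} < 1$ guarantees both that $1-\beta > 0$, so the final division is legitimate, and that the resulting Bessel bound is finite and positive.
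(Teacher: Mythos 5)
Your proposal is correct and follows essentially the same route as the paper: the same triangle-inequality rearrangement of the perturbation hypothesis giving $(1-\beta)\|S_{\Lambda,\Gamma}f\| \leq (1+\alpha)\|S_{\Phi,\Psi}f\| + \gamma\|f\|$, the same duality-plus-double-Cauchy--Schwarz estimate $\|S_{\Phi,\Psi}f\| \leq \sqrt{B_{\Phi}B_{\Psi}}\,\|f\|$, and the same concluding step $\langle S_{\Lambda,\Gamma}f, f\rangle \leq \|S_{\Lambda,\Gamma}\|\,\|f\|^2$. The only difference is cosmetic ordering --- you state the reduction to an operator-norm bound up front, while the paper performs the estimates first and draws the Bessel conclusion at the end.
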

\begin{proof} For any $f \in \mathcal{H}$, we have
$$
\begin{aligned}
\left\|\int_{\Omega} \Gamma_{\omega}^* \Lambda_{\omega} f d \mu(\omega)\right\| & \leq\left\|\int_{\Omega}\left(\Gamma_{\omega}^* \Lambda_{\omega}-\Psi_{\omega}^* \Phi_{\omega}\right) f d \mu(\omega)\right\|+\left\|\int_{\Omega} \Psi_{\omega}^* \Phi_{\omega} f d \mu(\omega)\right\| \\
& \leq\left(1+\alpha\right)\left\|\int_{\Omega} \Psi_{\omega}^* \Phi_{\omega} f d \mu(\omega)\right\|+\beta\left\|\int_{\Omega} \Gamma_{\omega}^* \Lambda_{\omega} f d \mu(\omega)\right\|+\gamma \left\| f d \mu(\omega)\right\| .
\end{aligned}
$$
Then
$$
\left\|\int_{\Omega} \Gamma_{\omega}^* \Lambda_{\omega} f d \mu(\omega)\right\| \leq \dfrac{1+\alpha}{1-\beta}\left\|\int_{\Omega} \Psi_{\omega}^* \Phi_{\omega} f d \mu(\omega)\right\|+\dfrac{\gamma}{1-\beta}\left\| f\right\| .
$$

Since
$$
\begin{aligned}
\left\|\int_{\Omega} \Psi_{\omega}^* \Phi_{\omega} f d \mu(\omega)\right\| & =\sup _{\|g\|=1}\left|\left\langle \int_{\Omega}\Psi_{\omega}^* \Phi_{\omega} f d \mu(\omega), g\right\rangle\right|\\ 
&=\sup _{\|g\|=1}\left|\int_{\Omega} \left\langle\Psi_{\omega}^* \Phi_{\omega} f , g\right\rangle d \mu(\omega)\right| \\
&=\sup _{\|g\|=1}\left|\int_{\Omega}\left\langle \Phi_{\omega} f, \Psi_{\omega} g\right\rangle d \mu(\omega)\right| \\
& \leq\left(\int_{\Omega}\left\|\Phi_{\omega} f\right\|^2\right)^{\dfrac{1}{2}} \sup _{\|g\|=1}\left(\int_{\Omega}\left\|\Psi_{\omega} g\right\|^2\right)^{\dfrac{1}{2}} \\
& \leq \sqrt{B_{\Phi}B_{\Psi}} \left\| f\right\|.
\end{aligned}
$$
 Hence, for all $f \in \mathcal{H}$, we have
$$
\left\|\int_{\Omega} \Gamma_{\omega}^* \Lambda_{\omega} f  d \mu(\omega)\right\| \leq \dfrac{\left(1+\alpha\right) \sqrt{B_{\Phi}B_{\Psi}}}{1-\beta}\left\| f\right\| +\dfrac{\gamma}{1-\beta}\left\| f\right\|= \dfrac{\left(1+\alpha\right) \sqrt{B_{\Phi}B_{\Psi}}+\gamma}{1-\beta} \|f\| .
$$
We considere
$$
\mathcal{M} :\mathcal{H} \rightarrow \mathcal{H}, \quad \mathcal{M} f=\int_{\Omega} \Gamma_{\omega}^* \Lambda_{\omega} f d \mu(\omega),\quad (f \in \mathcal{H}) .
$$
 $\mathcal{M}$ is well-defined, bounded and $$\|\mathcal{M}\| \leq \dfrac{\left(1+\alpha\right) \sqrt{B_{\Phi}B_{\Psi}}+\gamma}{1-\beta}.$$ For every $f \in \mathcal{H}$, we have 
\begin{equation} \label{eq51}
\langle \mathcal{M} f, f\rangle=\langle\int_{\Omega} \Gamma_{\omega}^* \Lambda_{\omega} f d \mu(\omega),f\rangle=\int_{\Omega}\langle \Lambda_{\omega} f,\Gamma_{\omega} f\rangle d \mu(\omega)\leq\|\mathcal{M}\|\|f\|^2
\end{equation} 
It implies that $\left(\left\{\Lambda_{\omega}\right\}_{\omega \in \Omega},\left\{\Gamma_{\omega}\right\}_{\omega \in \Omega}\right)$ is a continuous bi-$g$-Bessel sequence for $\mathcal{H}$ with respect to $\left\{\mathcal{K}_{\omega}\right\}_{\omega \in \Omega}$
\end{proof}
\begin{corollary}
 Let $\Phi=\left\{\Phi_{\omega}\;:\;\Phi_{\omega}\in\mathcal{B}(\mathcal{H},\mathcal{K}_{\omega})\right\}_{\omega \in \Omega}$ and $\Psi=\left\{\Psi_{\omega}\;:\;\Psi_{\omega}\in\mathcal{B}(\mathcal{H},\mathcal{K}_{\omega})\right\}_{\omega \in \Omega}$ are two $g$-Bessel sequences with bounds $B_{\Phi}$, $B_{\Psi}$ respectively. Assume that $(\Phi, \Psi)$ be a continuous bi-$g$-frame for $\mathcal{H}$ with respect to $\left\{\mathcal{K}_{\omega}\right\}_{\omega \in \Omega}$ with bounds  $A$ and $B$ and $\left(\left\{\Lambda_{\omega}\right\}_{\omega \in \Omega},\left\{\Gamma_{\omega}\right\}_{\omega \in \Omega}\right)$ be a pair of sequences for $\mathcal{H}$ with respect to $\left\{\mathcal{K}_{\omega}\right\}_{\omega \in \Omega}$.  If there exists constant $0<D<A$ such that
$$
\left\|\int_{\Omega}\left(\Psi_{\omega}^* \Phi_{\omega} -\Psi_\omega^* \Phi_\omega\right) f\right\| \leq D\left\| f\right\|, \forall f \in \mathcal{H},
$$
then $\left(\left\{\Lambda_{\omega}\right\}_{\omega \in \Omega},\left\{\Gamma_{\omega}\right\}_{\omega \in \Omega}\right)$ is a continuous bi-$g$-Bessel for $H$ with respect to $\left\{\mathcal{K}_\omega\right\}_{\omega \in \Omega}$ with bound $\left(\sqrt{B_{\Phi}B_{\Psi}}+D\sqrt{\dfrac{B}{A}}\right)$.
\end{corollary}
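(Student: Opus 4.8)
The plan is to read the statement off from Theorem~\ref{th51}. First, I would flag the evident misprint in the hypothesis: the perturbation operator must be $\Psi_\omega^*\Phi_\omega-\Gamma_\omega^*\Lambda_\omega$ (the printed $\Psi_\omega^*\Phi_\omega-\Psi_\omega^*\Phi_\omega$ vanishes identically), so that it matches the quantity controlled in Theorem~\ref{th51}. With this correction the hypothesis is exactly the special case of Theorem~\ref{th51} in which the dependence on the two mixed operators is suppressed, i.e. $\alpha=\beta=0$ and the entire discrepancy is absorbed into the $\gamma\|f\|$ term. At the level of strategy I would therefore specialise the stability theorem and then rewrite the resulting constant in terms of the continuous bi-$g$-frame bounds $A$ and $B$.

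Concretely, I would set $\mathcal{M}f=\int_\Omega\Gamma_\omega^*\Lambda_\omega f\,d\mu(\omega)$ and, exactly as in the proof of Theorem~\ref{th51}, estimate
\[
\|\mathcal{M}f\|\le\Big\|\int_\Omega(\Gamma_\omega^*\Lambda_\omega-\Psi_\omega^*\Phi_\omega)f\,d\mu(\omega)\Big\|+\Big\|\int_\Omega\Psi_\omega^*\Phi_\omega f\,d\mu(\omega)\Big\|.
\]
The first term is at most $D\|f\|$ by hypothesis, while the second is bounded by $\sqrt{B_\Phi B_\Psi}\,\|f\|$ through the Cauchy--Schwarz computation already carried out in Theorem~\ref{th51} (using that $\Phi,\Psi$ are $g$-Bessel with bounds $B_\Phi,B_\Psi$). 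This yields the clean estimate $\|\mathcal{M}f\|\le(\sqrt{B_\Phi B_\Psi}+D)\|f\|$. To recover the stated constant I would then invoke the frame-operator proposition: since $A\,I\le S_{\Phi,\Psi}\le B\,I$ forces $B\ge A$, we have $\sqrt{B/A}\ge1$, hence $D\le D\sqrt{B/A}$ and
\[
\|\mathcal{M}f\|\le\Big(\sqrt{B_\Phi B_\Psi}+D\sqrt{\tfrac{B}{A}}\Big)\|f\|.
\]
Finally, $\int_\Omega\langle\Lambda_\omega f,\Gamma_\omega f\rangle\,d\mu(\omega)=\langle\mathcal{M}f,f\rangle\le\|\mathcal{M}f\|\,\|f\|$ gives the continuous bi-$g$-Bessel inequality with the advertised bound.

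I do not expect a serious obstacle here, since the argument is a faithful specialisation of Theorem~\ref{th51}; the only genuinely new bookkeeping is the passage from the sharp constant $\sqrt{B_\Phi B_\Psi}+D$ to the stated $\sqrt{B_\Phi B_\Psi}+D\sqrt{B/A}$, which is precisely where the frame bounds $A$ and $B$ enter, the factor $\sqrt{B/A}$ being the square root of the condition number of $S_{\Phi,\Psi}$. The one point requiring care is that the hypotheses of Theorem~\ref{th51} actually hold under the present weaker assumption: with $\alpha=\beta=0$ the constraint $\max\{\alpha+\gamma,\beta\}<1$ reduces to a condition on $\gamma$ alone, so I would either verify the size of the resulting $\gamma$ directly, or, more cleanly, observe that the Bessel conclusion in the proof of Theorem~\ref{th51} uses only $\beta<1$, so the bound goes through regardless. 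The role of the hypothesis $0<D<A$ is then to keep the perturbation strictly below the lower frame bound, which is exactly what guarantees that the perturbed pair remains a genuine continuous bi-$g$-Bessel sequence.
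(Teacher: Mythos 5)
Your proposal is correct and takes essentially the same route as the paper: the paper likewise reads the hypothesis as controlling $\Psi_{\omega}^{*}\Phi_{\omega}-\Gamma_{\omega}^{*}\Lambda_{\omega}$ (the printed difference is the same typo you flag), converts $D\|f\|$ into $D\sqrt{B/A}\,\|f\|$ via the bi-$g$-frame bounds, and then specialises Theorem 5.1 with $\alpha=\beta=0$ and $\gamma=D\sqrt{B/A}$. Your additional observation that $\gamma=D\sqrt{B/A}$ need not lie in $[0,1)$ (since $0<D<A$ does not force $D\sqrt{B/A}<1$), but that the Bessel conclusion of Theorem 5.1 only requires $\beta<1$, correctly repairs a hypothesis-check the paper silently skips.
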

\begin{proof} For any $f \in \mathcal{H}$, we have  $$\left\| f\right\| \leq  \dfrac{1}{\sqrt{A}}\left(\int_{\Omega}\langle \Phi_\omega f,\Psi_\omega f \rangle d \mu(\omega)\right) ^{\dfrac{1}{2}}$$
Then
$$
\begin{aligned}
\left\|\int_{\Omega}\left(\Psi_{\omega}^* \Phi_{\omega} -\Psi_\omega^* \Phi_\omega\right) f\right\| &\leq D\left\|f\right\|\\ &\leq  \dfrac{1}{\sqrt{A}}\left(\int_{\Omega}\langle \Phi_\omega f,\Psi_\omega f \rangle d \mu(\omega)\right) ^{\dfrac{1}{2}}\\
&\leq  D\sqrt{\dfrac{B}{A}}\|f\|.
\end{aligned}
$$

By letting $\alpha, \beta=0, \gamma= D\sqrt{\dfrac{B}{A}}$ in Theorem \ref{th51}, $\left(\left\{\Lambda_{\omega}\right\}_{\omega \in \Omega},\left\{\Gamma_{\omega}\right\}_{\omega \in \Omega}\right)$ is a continuous bi-$g$-Bessel for $\mathcal{H}$ with respect to $\left\{\mathcal{K}_{\omega}\right\}_{\omega \in \Omega}$ with bound $\left(\sqrt{B_{\Phi}B_{\Psi}}+D\sqrt{\dfrac{B}{A}}\right)$.
\end{proof}
\begin{theorem} \label{th53}
 Let $\Phi=\left\{\Phi_{\omega}\;:\;\Phi_{\omega}\in\mathcal{B}(\mathcal{H},\mathcal{K}_{\omega})\right\}_{\omega \in \Omega}$ and $\Psi=\left\{\Psi_{\omega}\;:\;\Psi_{\omega}\in\mathcal{B}(\mathcal{H},\mathcal{K}_{\omega})\right\}_{\omega \in \Omega}$ are two $g$-Bessel sequences with bounds $B_{\Phi}$, $B_{\Psi}$ respectively. Assume that $(\Phi, \Psi)$ be a continuous bi-$g$-frame for $\mathcal{H}$ with respect to $\left\{\mathcal{K}_{\omega}\right\}_{\omega \in \Omega}$ with bounds $A$ and $B$ and $\left(\left\{\Lambda_{\omega}\right\}_{\omega \in \Omega},\left\{\Gamma_{\omega}\right\}_{\omega \in \Omega}\right)$ be a pair of sequences for $\mathcal{H}$ with respect to $\left\{\mathcal{K}_{\omega}\right\}_{\omega \in \Omega}$. If there exist constants $\alpha, \beta, \gamma \in[0,1)$ such that $\max \left\{\alpha+\gamma\sqrt{\dfrac{ B }{A}} , \beta\right\}<1$ and
$$
\begin{aligned}
\left\|\int_{\Omega}\left(\Psi_{\omega}^* \Phi_{\omega}-\Gamma_{\omega}^* \Lambda_{\omega}\right) f\right\| 
\leq & \alpha\left\|\int_{\Omega} \Psi_{\omega}^* \Phi_{\omega} f\right\|+\beta\left\|\int_{\Omega} \Gamma_{\omega}^* \Lambda_{\omega} f\right\|+\gamma\left\| f\right\|.
\end{aligned}
$$
Then $\left(\left\{\Lambda_{\omega}\right\}_{\omega \in \Omega},\left\{\Gamma_{\omega}\right\}_{\omega \in \Omega}\right)$ is a continuous bi-$g$-Bessel for $\mathcal{H}$ with respect to $\left\{\mathcal{K}_{\omega}\right\}_{\omega \in \Omega}$ with bound
$$
 \dfrac{\left[\left(1+\alpha\right) \sqrt{B_{\Phi}B_{\Psi}}+\gamma\sqrt{\dfrac{ B }{A}}\right] }{1-\beta}.
$$
\end{theorem}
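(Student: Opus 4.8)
The plan is to follow the template of Theorem \ref{th51} almost verbatim, the only new ingredient being that the residual term $\gamma\|f\|$ is re-expressed through the two continuous bi-$g$-frame bounds $A$ and $B$ so as to produce the factor $\sqrt{B/A}$.

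First I would apply the triangle inequality to $\|\int_\Omega \Gamma_\omega^* \Lambda_\omega f\,d\mu(\omega)\|$, inserting and subtracting $\int_\Omega \Psi_\omega^* \Phi_\omega f\,d\mu(\omega)$, and then invoke the hypothesis to control the difference term. This yields $\|\int_\Omega \Gamma_\omega^* \Lambda_\omega f\| \leq (1+\alpha)\|\int_\Omega \Psi_\omega^* \Phi_\omega f\| + \beta\|\int_\Omega \Gamma_\omega^* \Lambda_\omega f\| + \gamma\|f\|$. Since $\beta<1$, I would move the middle term to the left-hand side and divide by $1-\beta$, obtaining $\|\int_\Omega \Gamma_\omega^* \Lambda_\omega f\| \leq \frac{1+\alpha}{1-\beta}\|\int_\Omega \Psi_\omega^* \Phi_\omega f\| + \frac{\gamma}{1-\beta}\|f\|$.

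Next I would estimate the two remaining pieces. For the first, exactly as in the proof of Theorem \ref{th51}, I would write $\|\int_\Omega \Psi_\omega^* \Phi_\omega f\| = \sup_{\|g\|=1}|\int_\Omega \langle \Phi_\omega f, \Psi_\omega g\rangle\,d\mu(\omega)|$, apply the Cauchy-Schwarz inequality under the integral, and then the $g$-Bessel bounds of $\Phi$ and $\Psi$, so that $\|\int_\Omega \Psi_\omega^* \Phi_\omega f\| \leq \sqrt{B_\Phi B_\Psi}\,\|f\|$. For the second piece, which is the sole structural difference from Theorem \ref{th51}, I would use the lower bound of the continuous bi-$g$-frame $(\Phi,\Psi)$ to write $\|f\| \leq \tfrac{1}{\sqrt{A}}\bigl(\int_\Omega \langle \Phi_\omega f, \Psi_\omega f\rangle\,d\mu(\omega)\bigr)^{1/2}$ and then its upper bound $B$ to replace the integral by $B\|f\|^2$, giving $\|f\| \leq \sqrt{B/A}\,\|f\|$ and hence $\frac{\gamma}{1-\beta}\|f\| \leq \frac{\gamma\sqrt{B/A}}{1-\beta}\|f\|$. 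Summing the two estimates produces the claimed bound $\frac{(1+\alpha)\sqrt{B_\Phi B_\Psi} + \gamma\sqrt{B/A}}{1-\beta}$.

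Finally I would package this as a boundedness statement for the operator $\mathcal{M}f = \int_\Omega \Gamma_\omega^* \Lambda_\omega f\,d\mu(\omega)$: the estimate above shows $\mathcal{M}$ is well-defined and bounded with $\|\mathcal{M}\|$ at most the stated constant, after which $\langle \mathcal{M}f,f\rangle = \int_\Omega \langle \Lambda_\omega f, \Gamma_\omega f\rangle\,d\mu(\omega) \leq \|\mathcal{M}\|\,\|f\|^2$ delivers the required upper continuous bi-$g$-Bessel inequality. I do not anticipate a genuine obstacle, since the argument runs parallel to Theorem \ref{th51}; the only point demanding care is that the hypothesis $\max\{\alpha+\gamma\sqrt{B/A},\beta\}<1$ is stronger than what the Bessel conclusion alone needs (only $\beta<1$ is actually used), so that condition is presumably kept with a view toward later upgrading the conclusion to a genuine frame, and I would flag the mild crudeness of passing from $\gamma$ to $\gamma\sqrt{B/A}$, which is legitimate precisely because $B\geq A$.
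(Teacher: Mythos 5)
Your proposal is correct and takes essentially the same approach as the paper: the paper's entire proof of this statement is the remark that it is analogous to Theorem \ref{th51}, and your filled-in argument reproduces that template step by step (triangle inequality, absorption of the $\beta$-term, Cauchy--Schwarz with the $g$-Bessel bounds, then the operator $\mathcal{M}$). Your flagged step replacing $\gamma$ by $\gamma\sqrt{B/A}$ is indeed a legitimate weakening since $A\leq B$, and the same crude passage through the lower and upper bi-$g$-frame bounds appears verbatim in the paper's own corollary to Theorem \ref{th51}.
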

\begin{proof}
The proof is analogous to that of Theorem \ref{th51}.
\end{proof}
\section{continuous $K$-bi-$g$-frames in Hilbert spaces}\label{s6}
In this section, we introduce the concept of a continuous $K$-bi-$g$-frame and subsequently establish some of its properties. But first we give the definition of continuous $K$-bi-$g$-frame in Hilbert spaces. Throughout the rest of this part (sections \ref{s6},\ref{s7} and \ref{s8}), we denote: $$(\Phi, \Psi)_{K}=\left(\left\{\Phi_{\omega}\;:\;\Phi_{\omega}\in\mathcal{B}(\mathcal{H},\mathcal{K}_{\omega})\right\}_{\omega \in \Omega},\left\{\Psi_{\omega}\;:\;\Psi_{\omega}\in\mathcal{B}(\mathcal{H},\mathcal{K}_{\omega})\right\}_{\omega \in \Omega}\right)$$

\begin{definition}\label{K-bi-g-frame}
Let $K \in \mathcal{B}(\mathcal{H})$. A pair $(\Phi, \Psi)_{K}$ of sequences is called a continuous $K$-bi-$g$-frame for $\mathcal{H}$  with respect to $\left\{\mathcal{K}_{\omega}\right\}_{\omega \in \Omega} $ , if there exist two constants $0<A \leq B<\infty$ such that
$$
A\left\|K^{\ast} f\right\|^2 \leq \int_{\Omega}\langle \Phi_{\omega} f,\Psi_{\omega} f \rangle d \mu(\omega) \leq B\|f\|^2, \text { for all } f \in \mathcal{H} .
$$
\end{definition}
The numbers $A$ and $B$ are called respectively the lower and upper bounds for the continuous $K$-bi-$g$-frames $(\Phi, \Psi)_{K}$ respectively. 
 If $K$ is equal to $\mathcal{I}_{\mathcal{H}}$, the identity operator on $\mathcal{H}$, then continuous $K$-bi-$g$-frames is continuous bi-$g$-frames.

\begin{remark}
According to Definition \ref{K-bi-g-frame}, the following statements are true for a sequence $\Phi=\left\{\Phi_{\omega}\;:\;\Phi_{\omega}\in\mathcal{B}(\mathcal{H},\mathcal{K}_{\omega})\right\}_{\omega \in \Omega}$ for $\mathcal{H}$ with respect to $\left\{\mathcal{K}_{\omega}\right\}_{\in I} $:
\begin{enumerate}
\item If $(\Phi, \Phi)$ is a continuous $K$-bi-$g$-frames for $\mathcal{H}$, then $\Phi$ is a continuous $K$-$g$-frame for $\mathcal{H}$.
\item If $(\Phi, C \Phi)$ is a continuous $K$-bi-$g$-frames for some $C \in \mathrm{GL}(\mathcal{H})$, then $\Phi$ is a $C$-controlled continuous $K$-$g$-frame for $\mathcal{H}$.
\item If $(C_{1} \Phi, C_{2} \Phi)$ is a continuous $K$-bi-$g$-frames for some $C_{1}$ and $C_{2}$ in $\mathrm{GL}(\mathcal{H})$, then $\Phi$ is a $(C_{1}, C_{2})$-controlled continuous $K$-$g$-frame for $\mathcal{H}$.
\end{enumerate}
\end{remark}
\begin{example} 
 Let $\left\{e_1, e_2, e_3\right\}$ be an standard orthonormal basis for $\mathcal{H}$. Assume that $\left\{\Omega_1, \Omega_2, \Omega_3\right\}$ is a partition of $\Omega$ where $\mu\left(\Omega_1\right), \mu\left(\Omega_2\right), \mu\left(\Omega_3\right)>0$. Define
 $$\begin{aligned} & \mathcal{X}: \Omega \rightarrow H \quad \text { by } \quad \mathcal{X}(\omega)= \begin{cases}\sqrt{\dfrac{2 }{\mu\left(\Omega_1\right)}}e_1 & \text { if } \omega \in \Omega_1 \\ \sqrt{\dfrac{3 }{\mu\left(\Omega_2\right)}}e_2 & \text { if } \omega \in \Omega_2 \\ \sqrt{\dfrac{3 }{\mu\left(\Omega_3\right)}}e_3 & \text { if } \omega \in \Omega_3\end{cases} \\ & \mathcal{Y}: \Omega \rightarrow H \quad \text { by } \quad \mathcal{Y}(\omega)= \begin{cases}\sqrt{\dfrac{8 }{\mu\left(\Omega_1\right)}}e_1 & \text { if } \omega \in \Omega_1 \\ \sqrt{\dfrac{3 }{\mu\left(\Omega_2\right)}}e_2 & \text { if } \omega \in \Omega_2 \\ 2\sqrt{\dfrac{3 }{\mu\left(\Omega_3\right)}}e_3 & \text { if } \omega \in \Omega_3\end{cases} \\ & \end{aligned}$$

It is easy to verify that for all $f \in H, \omega \mapsto\langle f, \mathcal{X}(\omega)\rangle$ and $\omega \mapsto\langle f, \mathcal{Y}(\omega)\rangle$ are measurable functions on $\Omega$. Define $K: H \rightarrow H$ by $K e_1=e_1, K e_2=e_3$
and $K e_3=e_2$. Then $K^* e_1=e_1, K^* e_2=e_3, K^* e_3=e_2$ and it is easy to verify that $\left\|K^* f\right\|^2=\|f\|^2$. Now, for $f \in H$, we have

$$
\begin{aligned}
& \int_{\Omega}\langle f, \mathcal{X}(\omega)\rangle\langle \mathcal{Y}(\omega), f\rangle d \mu(\omega) \\
& =\int_{\Omega_1}\left\langle f, \sqrt{\dfrac{2 }{\mu\left(\Omega_1\right)}}e_1\right\rangle\left\langle\sqrt{\dfrac{8 }{\mu\left(\Omega_1\right)}}e_1, f\right\rangle d \mu (\omega)\\
& +\int_{\Omega_2}\left\langle f,\sqrt{\dfrac{3 }{\mu\left(\Omega_1\right)}}e_1\right\rangle\left\langle\sqrt{\dfrac{3 }{\mu\left(\Omega_1\right)}}e_1, f\right\rangle d \mu(\omega) \\
& +\int_{\Omega_3}\left\langle f, \sqrt{\dfrac{3 }{\mu\left(\Omega_1\right)}}e_1\right\rangle\left\langle 2\sqrt{\dfrac{3 }{\mu\left(\Omega_1\right)}}e_1, f\right\rangle d \mu \\
& =4\left|\left\langle f, e_1\right\rangle\right|^2+3\left|\left\langle f, e_2\right\rangle\right|^2+6\left|\left\langle f, e_3\right\rangle\right|^2 \\
& =2\|f\|^2+2\left|\left\langle f, e_1\right\rangle\right|^2+3\left|\left\langle f, e_2\right\rangle\right|^2 .
\end{aligned}
$$

Hence, for every $f \in \mathcal{H}$, we obtain
$$
2\left\|K^* f\right\|^2 \leq \int_{\Omega}\langle f, \mathcal{X}(\omega)\rangle\langle \mathcal{Y}(\omega), f\rangle d \mu(\omega) \leq 3\|f\|^2 .
$$

Therefore, $(\mathcal{X}, \mathcal{Y})$ is a continuous $K$-biframe.

Now,  For each $\omega \in \Omega$, defines the operators 

 $$\begin{aligned} & \Phi_\omega: \mathcal{H} \longrightarrow \left\{\mathcal{K}_{\omega}\right\}_{\omega \in \Omega} \quad \text { by } \quad \Phi_\omega(f)= \begin{cases}\dfrac{1}{\sqrt{\mu\left(\Omega_1\right)}}\left\langle f, \mathcal{X}(\omega)\right\rangle \mathcal{I}(\omega) & \text { if } \omega \in \Omega_1 \\\dfrac{1}{\sqrt{\mu\left(\Omega_2\right)}}\left\langle f, \mathcal{X}(\omega)\right\rangle \mathcal{I}(\omega) & \text { if } \omega \in \Omega_2 \\\dfrac{1}{\sqrt{\mu\left(\Omega_3\right)}}\left\langle f, \mathcal{X}(\omega)\right\rangle \mathcal{I}(\omega) & \text { if } \omega \in \Omega_3\end{cases} \\
  And \;\;&\Psi_\omega: \mathcal{H} \longrightarrow \left\{\mathcal{K}_{\omega}\right\}_{\omega \in \Omega} \quad \text { by } \quad \Psi_\omega(f)= 
  \begin{cases}\dfrac{1}{\sqrt{\mu\left(\Omega_1\right)}}\left\langle \mathcal{Y}(\omega),f \right\rangle \mathcal{I}(\omega) & \text { if } \omega \in \Omega_1 \\\dfrac{1}{\sqrt{\mu\left(\Omega_2\right)}}\left\langle \mathcal{Y}(\omega),f\right\rangle \mathcal{I}(\omega) & \text { if } \omega \in \Omega_2 \\\dfrac{1}{\sqrt{\mu\left(\Omega_3\right)}}\left\langle \mathcal{Y}(\omega),f\right\rangle \mathcal{I}(\omega) & \text { if } \omega \in \Omega_3\end{cases} \\ & \end{aligned}$$

Where  $\left\|\mathcal{I}(\omega)\right\|=1$. For every $f \in \mathcal{H}$, we have
$$
\int_{\Omega}\langle \Phi_{\omega} f,\Psi_{\omega} f \rangle d \mu(\omega)=\int_{\Omega}\langle f, \mathcal{X}(\omega)\rangle\langle \mathcal{Y}(\omega), f\rangle d \mu(\omega).
$$
Which implies that

$$2\left\|K^* f\right\|^2 \leq\int_{\Omega}\langle \Phi_{\omega} f,\Psi_{\omega} f \rangle d \mu(\omega) \leq 3\|f\|^2$$
Hence, $(\Phi, \Psi)_K$ is a continuous $K$-bi-$g$-frame for $\mathcal{H}$  with respect to $\left\{\mathcal{K}_{\omega}\right\}_{\omega \in \Omega} $.
\end{example}

\begin{definition}\label{Def Tight}
Let $K \in \mathcal{B}(\mathcal{H})$. A pair  $(\Phi, \Psi)_{K}$ of sequences in $\mathcal{H}$ is said to be a tight continuous $K$-bi-$g$-frames with bound $A$ if
$$
A\left\|K^* f\right\|^2=\int_{\Omega}\langle \Phi_{\omega} f,\Psi_{\omega} f \rangle d \mu(\omega) , \text { for all } f \in \mathcal{H} .
$$

When $A=1$, it is called a Parseval continuous $K$-bi-$g$-frames.
\end{definition}

\begin{theorem}
  $(\Phi, \Psi)_{K}$ is a continuous $K$-bi-$g$-frames if and only if $(\Psi, \Phi)_{K}=\left(\left\{\Psi_{\omega}\right\}_{\omega \in \Omega},\left\{\Phi_{\omega}\right\}_{\omega \in \Omega}\right)$ is a continuous $K$-bi-$g$-frames.
\end{theorem}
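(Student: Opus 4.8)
The plan is to mirror the argument already used for the continuous bi-$g$-frame version proved in Section \ref{s3}, exploiting conjugate symmetry of the inner product together with the observation that the defining integral is forced to be real. First I would assume that $(\Phi, \Psi)_K$ is a continuous $K$-bi-$g$-frame with bounds $A$ and $B$, so that for every $f \in \mathcal{H}$,
$$
A\|K^* f\|^2 \leq \int_\Omega \langle \Phi_\omega f, \Psi_\omega f \rangle \, d\mu(\omega) \leq B\|f\|^2.
$$

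The key observation is that the middle term is squeezed between the two real numbers $A\|K^* f\|^2$ and $B\|f\|^2$, so the integral $\int_\Omega \langle \Phi_\omega f, \Psi_\omega f \rangle \, d\mu(\omega)$ is itself real, hence equal to its own complex conjugate. Using the conjugate symmetry $\overline{\langle \Phi_\omega f, \Psi_\omega f\rangle} = \langle \Psi_\omega f, \Phi_\omega f\rangle$ and pulling the conjugation inside the integral, I would write
$$
\int_\Omega \langle \Phi_\omega f, \Psi_\omega f \rangle \, d\mu(\omega) = \overline{\int_\Omega \langle \Phi_\omega f, \Psi_\omega f \rangle \, d\mu(\omega)} = \int_\Omega \langle \Psi_\omega f, \Phi_\omega f \rangle \, d\mu(\omega).
$$

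Substituting this identity into the frame inequalities immediately yields
$$
A\|K^* f\|^2 \leq \int_\Omega \langle \Psi_\omega f, \Phi_\omega f \rangle \, d\mu(\omega) \leq B\|f\|^2,
$$
which is exactly the assertion that $(\Psi, \Phi)_K$ is a continuous $K$-bi-$g$-frame, and with the \emph{same} bounds $A$ and $B$. The converse is completely symmetric: interchanging the roles of $\Phi$ and $\Psi$ in the computation above recovers $(\Phi, \Psi)_K$ from $(\Psi, \Phi)_K$. I expect no serious obstacle here, since the operator $K$ enters only through the lower bound $\|K^* f\|^2$, which is untouched by the swap; the single point that merits a word of care is the justification that the central integral is real-valued, as this is precisely what legitimizes replacing it by its complex conjugate.
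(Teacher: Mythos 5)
Your proposal is correct and follows essentially the same route as the paper: both proofs rest on the observation that the defining integral is real (being squeezed between $A\|K^*f\|^2$ and $B\|f\|^2$), hence equal to its conjugate, and then use conjugate symmetry under the integral to swap $\Phi_\omega$ and $\Psi_\omega$ while keeping the same bounds $A$ and $B$. If anything, you are slightly more careful than the paper, which equates the integral with its conjugate without explicitly noting that the squeeze is what justifies this step.
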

\begin{proof}
Let $(\Phi, \Psi)_{K}$ be a continuous $K$-bi-$g$-frames with bounds $A$ and $B$. Then, for every $f \in \mathcal{H}$,
$$
A\left\|K^{\ast} f\right\|^2\leq \int_{\Omega}\langle \Phi_{\omega} f,\Psi_{\omega} f \rangle d \mu(\omega) \leq B\left\| f \right\|^2.
$$

Now, we can write
$$
\begin{aligned}
\int_{\Omega}\langle \Phi_{\omega} f,\Psi_{\omega} f \rangle d \mu(\omega)&=\overline{\int_{\Omega}\langle \Phi_{\omega} f,\Psi_{\omega} f \rangle d \mu(\omega)}\\
&=\int_{\Omega}\overline{\langle \Phi_{\omega} f,\Psi_{\omega} f \rangle }d \mu(\omega)\\ &=\int_{\Omega}\langle  \Psi_{\omega}f,\Phi_{\omega} f \rangle d \mu(\omega) .
\end{aligned}
$$
Hence
$$
A\left\|K^{\ast} f\right\|^2 \leq \int_{\Omega}\langle  \Psi_{\omega}f,\Phi_{\omega} f \rangle d \mu(\omega) \leq B\left\| f \right\|^2 .
$$

This implies that, $(\Psi, \Phi)_{K}$ is a continuous $K$-bi-$g$-frames with bounds $A$ and $B$. The reverse of this statement can be proved similarly..
\end{proof}

\begin{theorem}
Let $K_1, K_2 \in \mathcal{B}(\mathcal{H})$. If  $(\Phi, \Psi)_{K}$ is an continuous K$_{j}$-bi-$g$-frame for $j\in\lbrace 1,\;2\rbrace$  and $\alpha_{1}, \alpha_{2}$ are scalars. Then the following holds: 
\begin{enumerate}
\item $(\Phi, \Psi)_{K}$ is continuous $(\alpha_{1} K_1+\alpha_{2} K_2)$-bi-$g$-frame
\item $(\Phi, \Psi)_{K}$ is continuous $K_1 K_2$-bi-$g$-frame.
\end{enumerate}
\end{theorem}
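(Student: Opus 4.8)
The plan is to exploit the fact that the central integral $\int_{\Omega}\langle \Phi_\omega f,\Psi_\omega f\rangle\,d\mu(\omega)$ is common to both hypotheses, so that only the lower bounds (which are the only place the operators $K_j^*$ enter) need to be reworked, while the upper bound transfers for free. Write $A_j,B_j$ for the bounds of the continuous $K_j$-bi-$g$-frame, $j\in\{1,2\}$, so that
$$
A_j\|K_j^* f\|^2 \leq \int_{\Omega}\langle \Phi_\omega f,\Psi_\omega f\rangle\,d\mu(\omega) \leq B_j\|f\|^2,\qquad f\in\mathcal{H}.
$$
For the upper estimate in both parts I would simply take $B:=\min\{B_1,B_2\}$ (either $B_j$ would do), since the same integral is already bounded above by $B_j\|f\|^2$ and the upper bound in Definition \ref{K-bi-g-frame} does not involve the operator at all; no further work is required.

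For part (1), I would first use that taking adjoints is conjugate-linear, giving $(\alpha_1 K_1+\alpha_2 K_2)^*=\overline{\alpha_1}K_1^*+\overline{\alpha_2}K_2^*$, and then apply the triangle inequality to obtain $\|(\alpha_1 K_1+\alpha_2 K_2)^* f\|\leq|\alpha_1|\,\|K_1^* f\|+|\alpha_2|\,\|K_2^* f\|$. Each lower bound rearranges to $\|K_j^* f\|\leq A_j^{-1/2}\bigl(\int_{\Omega}\langle \Phi_\omega f,\Psi_\omega f\rangle\,d\mu(\omega)\bigr)^{1/2}$, so combining the two and squaring yields
$$
\|(\alpha_1 K_1+\alpha_2 K_2)^* f\|^2 \leq \Bigl(|\alpha_1|A_1^{-1/2}+|\alpha_2|A_2^{-1/2}\Bigr)^2\int_{\Omega}\langle \Phi_\omega f,\Psi_\omega f\rangle\,d\mu(\omega).
$$
Rearranging gives the lower frame bound $A=\bigl(|\alpha_1|A_1^{-1/2}+|\alpha_2|A_2^{-1/2}\bigr)^{-2}$, valid whenever the scalars are not both zero; if $\alpha_1=\alpha_2=0$ the combined operator vanishes and the lower inequality holds trivially for any positive constant.

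For part (2), I would use $(K_1 K_2)^*=K_2^* K_1^*$ together with the boundedness of $K_2$, so that $\|(K_1 K_2)^* f\|=\|K_2^* K_1^* f\|\leq\|K_2\|\,\|K_1^* f\|$. Invoking only the $K_1$-lower bound then gives $\|(K_1 K_2)^* f\|^2\leq \|K_2\|^2 A_1^{-1}\int_{\Omega}\langle \Phi_\omega f,\Psi_\omega f\rangle\,d\mu(\omega)$, hence the lower bound $A=A_1/\|K_2\|^2$ (with the case $K_2=0$ again trivial). I do not expect a genuine obstacle here, since the whole argument is a direct manipulation of the two defining inequalities; the only points demanding care are the constant bookkeeping — in particular the square-root/squaring step in part (1) — and the degenerate cases in which the resulting operator is zero, where the lower bound is automatic.
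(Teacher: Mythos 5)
Your proposal is correct and follows the same strategy as the paper: the upper bound transfers for free since the defining integral is common to both hypotheses, the lower bound in part (1) is obtained by expanding $(\alpha_1K_1+\alpha_2K_2)^*$ and feeding each $\|K_j^*f\|$ back into its own frame inequality, and part (2) uses $\|(K_1K_2)^*f\|\leq\|K_2^*\|\,\|K_1^*f\|$ together with only the $K_1$-lower bound, yielding the same bound $A_1/\|K_2\|^2$ as the paper's $A/\|K_2^*\|^2$. One point in your favor is worth recording: in part (1) the paper asserts $\|(\alpha_1K_1+\alpha_2K_2)^*f\|^2\leq|\alpha_1|^2\|K_1^*f\|^2+|\alpha_2|^2\|K_2^*f\|^2$, which is false in general (the triangle inequality only gives this with an extra factor of $2$, as one sees already with $K_1=K_2$, $\alpha_1=\alpha_2=1$), so the paper's stated lower bound $\dfrac{A C}{C|\alpha_1|^2+A|\alpha_2|^2}$ is not actually justified by its own argument. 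Your route --- triangle inequality first, substitution of $\|K_j^*f\|\leq A_j^{-1/2}\bigl(\int_{\Omega}\langle\Phi_\omega f,\Psi_\omega f\rangle\,d\mu(\omega)\bigr)^{1/2}$, and squaring afterwards --- is sound and produces the valid constant $\bigl(|\alpha_1|A_1^{-1/2}+|\alpha_2|A_2^{-1/2}\bigr)^{-2}$. Your explicit treatment of the degenerate cases $\alpha_1=\alpha_2=0$ and $K_2=0$ (where the paper's constants would be meaningless or undefined) is also a genuine improvement, since the theorem as stated allows arbitrary scalars.
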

\begin{proof}
(1) Let $(\Phi, \Psi)_{K}$
  be $K_{1}$-bi-$g$-frame and $K_{2}$-bi-$g$-frame. Then for $j=1$, there exist two constants $0<A \leq B<\infty$ such that
$$
A\left\|K_1^{\ast} f\right\|^2 \leq \int_{\Omega}\langle \Phi_{\omega} f,\Psi_{\omega} f \rangle d \mu(\omega) \leq B\|f\|^2, \text { for all } f \in \mathcal{H} .
$$
And for $j=2$, there exist two constants $0<C \leq D<\infty$ such that
$$
C\left\|K_2^{\ast} f\right\|^2 \leq \int_{\Omega}\langle \Phi_{\omega} f,\Psi_{\omega} f \rangle d \mu(\omega) \leq D\|f\|^2, \text { for all } f \in \mathcal{H} .
$$
Now, we can write
$$
\begin{aligned}
\left\|\left(\alpha_{1} K_1+\alpha_{2} K_2\right)^{\ast} f\right\|^2 & \leq|\alpha_{1}|^2\left\|K_1^{\ast} f\right\|^2+|\alpha_{2}|^2\left\|K_2^{\ast} f\right\|^2 \\
& \leq|\alpha_{1}|^2\left(\dfrac{1}{A} \int_{\Omega}\langle \Phi_{\omega} f,\Psi_{\omega} f \rangle d \mu(\omega)\right)+|\alpha_{2}|^2\left(\dfrac{1}{C} \int_{\Omega}\langle \Phi_{\omega} f,\Psi_{\omega} f \rangle d \mu(\omega)\right) \\
& =\left(\dfrac{|\alpha_{1}|^2}{A}+\dfrac{|\alpha_{2}|^2}{C}\right) \int_{\Omega}\langle \Phi_{\omega} f,\Psi_{\omega} f \rangle d \mu(\omega) .
\end{aligned}
$$

It follows that
$$
\left(\dfrac{A C}{C|\alpha|^2+A|\alpha_{2}|^2}\right)\left\|\left(\alpha_{1} K_1+\alpha_{2} K_2\right)^{\ast} f\right\|^2 \leq \int_{\Omega}\langle \Phi_{\omega} f,\Psi_{\omega} f \rangle d \mu(\omega) .
$$

Hence $(\Phi, \Psi)_{K}$ satisfies the lower frame condition. And we have
$$
\int_{\Omega}\langle \Phi_{\omega} f,\Psi_{\omega} f \rangle d \mu(\omega) \leq\min \lbrace B,D\rbrace\|f\|^2, \text { for all } f \in \mathcal{H}
$$
it follows that
$$
\left(\dfrac{A C}{C|\alpha|^2+A|\alpha_{2}|^2}\right)\left\|\left(\alpha_{1} K_1+\alpha_{2} K_2\right)^{\ast} f\right\|^2\leq\int_{\Omega}\langle \Phi_{\omega} f,\Psi_{\omega} f \rangle d \mu(\omega) \leq\min \lbrace B,D\rbrace\|f\|^2, \text { for all } f \in \mathcal{H}
$$
Therefore $(\Phi, \Psi)_{K}$ is continuous $(\alpha_{1} K_1+\alpha_{2} K_2)$-bi-$g$-frame.

(2) Now for each $f \in \mathcal{H}$, we have
$$
\left\|\left(K_1 K_2\right)^{\ast} f\right\|^2=\left\|K_2^{\ast} K_1^{\ast} f\right\|^2 \leq\left\|K_2^{\ast}\right\|^2\left\|K_1^{\ast} f\right\|^2 .
$$

Since  $(\Phi, \Psi)_{K}$ is continuous $K_1$-bi-$g$-frame, then there exist two constants $0<A \leq B<\infty$ such that
$$
A\left\|K_1^{\ast} f\right\|^2 \leq \int_{\Omega}\langle \Phi_{\omega} f,\Psi_{\omega} f \rangle d \mu(\omega) \leq B\|f\|^2, \text { for all } f \in \mathcal{H} .
$$
Therefore
$$
\dfrac{1}{\left\|K_2^{\ast}\right\|^2} \left\|\left(K_1 K_2\right)^{\ast} f\right\|^2\leq\left\|K_1^{\ast} f\right\|^2 \leq \dfrac{1}{A} \int_{\Omega}\langle \Phi_{\omega} f,\Psi_{\omega} f \rangle d \mu(\omega) \leq \dfrac{B}{A}\|f\|^2 .
$$

This implies that
$$
\dfrac{A}{\left\|K_2^{\ast}\right\|^2}\left\|\left(K_1 K_2\right)^{\ast} f\right\|^2 \leq \int_{\Omega}\langle \Phi_{\omega} f,\Psi_{\omega} f \rangle d \mu(\omega) \leq B\|f\|^2, \text { for all } f \in \mathcal{H} .
$$

Therefore $(\Phi, \Psi)_{K}$ is continuous $K_1 K_2$-bi-$g$-frame for $\mathcal{H} $.
\end{proof}

\begin{corollary}
 Let $n\in\mathbb{N}\setminus\lbrace  0,1\rbrace$ and $K_i \in \mathcal{B}(\mathcal{H})$ for $j\in  [\![1;n]\!] $. If  $(\Phi, \Psi)_{K}$ is continuous K$_{j}$-bi-$g$-frame for $j\in  [\![1;n]\!]$  and $\alpha_{1}, \alpha_{2}\cdots, \alpha_{n}$ are non-zero scalars. Then the following holds: 
\begin{enumerate}
\item $(\Phi, \Psi)_{K}$ is continuous $(\sum\limits_{j=1}^{n}\alpha_{j} K_i)$-bi-$g$-frame
\item $(\Phi, \Psi)_{K}$ is continuous $(K_1K_2\cdots K_{n})$-bi-$g$-frame.
\end{enumerate}
\end{corollary}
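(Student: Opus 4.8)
The plan is to argue by induction on $n$, using the preceding theorem (which is precisely the two-operator case) both as the base case and as the engine driving the inductive step. For each of the two parts I would group the first $n-1$ operators into a single bounded operator and then invoke the two-operator result exactly once, so that no new estimate has to be produced from scratch.

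For part (1), the base case $n=2$ is part (1) of the preceding theorem. Assuming the claim for $n-1$, the object $(\Phi,\Psi)_K$ is a continuous $L$-bi-$g$-frame where $L:=\sum_{j=1}^{n-1}\alpha_j K_j$; since $\mathcal{B}(\mathcal{H})$ is a vector space this is again a bounded operator, so the hypotheses of the theorem are met. As $(\Phi,\Psi)_K$ is by assumption also a continuous $K_n$-bi-$g$-frame, I would apply part (1) of the preceding theorem to the pair $(L,K_n)$ with scalars $(1,\alpha_n)$. This produces a continuous $(1\cdot L+\alpha_n K_n)$-bi-$g$-frame, and since $1\cdot L+\alpha_n K_n=\sum_{j=1}^{n}\alpha_j K_j$ (the scalars $\alpha_1,\dots,\alpha_{n-1}$ being already absorbed into $L$), the induction closes. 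The hypothesis that every $\alpha_j$ is non-zero is exactly what keeps the denominator in the lower-bound constant of the theorem positive at each stage.

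For part (2), the base case $n=2$ is part (2) of the preceding theorem. Assuming the claim for $n-1$, $(\Phi,\Psi)_K$ is a continuous $M$-bi-$g$-frame with $M:=K_1K_2\cdots K_{n-1}\in\mathcal{B}(\mathcal{H})$ (boundedness again being automatic, as the product of finitely many bounded operators is bounded). I would then apply part (2) of the preceding theorem to the pair $(M,K_n)$, treating $M$ as the first factor and $K_n$ as the second, to obtain a continuous $(MK_n)$-bi-$g$-frame; since $MK_n=K_1K_2\cdots K_n$, this completes the induction. Here I note that the two-operator result for products only uses the frame property of the first factor together with boundedness of the second, so at each step I need nothing about $K_n$ beyond $K_n\in\mathcal{B}(\mathcal{H})$.

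I do not expect a genuine analytic obstacle; the entire content is organizational. The one point requiring care is the bookkeeping of the intermediate operators $L$ and $M$ — one must confirm they remain bounded (immediate) and that the frame bounds transform correctly under each application of the theorem, the lower bound acquiring at every step the factor dictated by the two-operator formulas (namely $AC/(C|\alpha_1|^2+A|\alpha_2|^2)$ for sums and $A/\|K_n^{\ast}\|^2$ for products). Since the corollary only asserts the existence of bounds, iterating these constants $n-1$ times yields admissible bounds and no explicit closed form need be recorded.
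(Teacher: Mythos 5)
Your argument is correct, but it takes a genuinely different route from the paper's: you induct on $n$ and invoke the two-operator theorem as a black box, whereas the paper simply redoes the two-operator computation directly for $n$ terms. For the sum, the paper peels off one operator at a time inside a single chain of inequalities, bounding $\bigl\|\bigl(\sum_{j=1}^{n}\alpha_j K_j\bigr)^{\ast}f\bigr\|^2$ by $\sum_{j=1}^{n}|\alpha_j|^2\|K_j^{\ast}f\|^2$ and then applying each lower bound $A_j$, which produces the explicit constants $\bigl(\sum_{j=1}^{n}|\alpha_j|^2/A_j\bigr)^{-1}$ and $\min_j B_j$; your induction telescopes to exactly the same lower constant, since $\bigl(\tfrac{1}{A_L}+\tfrac{|\alpha_n|^2}{A_n}\bigr)^{-1}$ with $A_L^{-1}=\sum_{j=1}^{n-1}|\alpha_j|^2/A_j$ closes the recursion, so nothing is lost there. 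For the product, the paper estimates $\|(K_1\cdots K_n)^{\ast}f\|^2\le\|K_n^{\ast}\cdots K_2^{\ast}\|^2\,\|K_1^{\ast}f\|^2$ and uses only the $K_1$-frame condition, giving the constant $A_1/\|K_n^{\ast}\cdots K_2^{\ast}\|^2$; your iterated application instead yields $A_1/\bigl(\|K_2^{\ast}\|^2\cdots\|K_n^{\ast}\|^2\bigr)$, which is in general smaller (the norm of a composition is at most the product of the norms) but still an admissible bound, and the corollary asserts only existence of bounds, so this is harmless. Two small remarks. First, your claim that the non-vanishing of \emph{all} $\alpha_j$ is what keeps the denominators positive is slightly off: one non-zero scalar suffices, and in your inductive step the first scalar is $1$, so positivity is automatic. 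Second, your black-box use of the theorem is arguably safer than the paper's direct expansion: the paper's chain rests on the inequality $\|a+b\|^2\le\|a\|^2+\|b\|^2$, which is false in general (take $a=b$), so the displayed lower constants in both the theorem and the corollary require a factor-of-two style correction (e.g.\ via $\|a+b\|^2\le 2\|a\|^2+2\|b\|^2$); your argument uses only the qualitative conclusion of the two-operator theorem, which survives that repair, at the cost of never recording a closed-form constant.
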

\begin{proof}
(1) Suppose that $n\in\mathbb{N}\setminus\lbrace  0,1\rbrace$ and for every $j\in  [\![1;n]\!]$, $(\Phi, \Psi)_{K}$ is continuous $K_{j}$-bi-$g$-frame . Then for each $j\in  [\![1;n]\!]$ there exist positive constants $0<A_{j} \leq B_{j}<\infty$ such that
$$
A_{j}\left\|K_i^{\ast} f\right\|^2 \leq \int_{\Omega}\langle \Phi_{\omega} f,\Psi_{\omega} f \rangle d \mu(\omega) \leq B_{j}\|f\|^2, \text { for all } f \in \mathcal{H} .
$$
Now, we can write
$$
\begin{aligned}
\left\|\left(\sum\limits_{j=1}^{n}\alpha_{j} K_i\right)^{\ast} f\right\|^2 & =\left\|\alpha_{1} K_1^{\ast} f+\left(\alpha_{2}K_2+\cdots+\alpha_{n} K_n\right)^{\ast} f\right\|^2 \\
& \leq|\alpha_{1}|^2\left\|K_1^{\ast} f\right\|^2+\left\|\left(\alpha_{2}K_2+\cdots+\alpha_{n} K_n\right)^{\ast} f\right\|^2 \\
& \leq|\alpha_{1}|^2\left\|K_1^{\ast} f\right\|^2+\cdots+|\alpha_{n}|^2\left\|K_n^{\ast} f\right\|^2 \\
& \leq|\alpha_{1}|^2\left(\dfrac{1}{A_{1}} \int_{\Omega}\langle \Phi_{\omega} f,\Psi_{\omega} f \rangle d \mu(\omega)\right)+\cdots+|\alpha_{n}|^2\left(\dfrac{1}{A_{n}} \int_{\Omega}\langle \Phi_{\omega} f,\Psi_{\omega} f \rangle d \mu(\omega)\right) \\
& =\left(\dfrac{|\alpha_{1}|^2}{A_{1}}+\cdots+\dfrac{|\alpha_{n}|^2}{A_{n}}\right) \int_{\Omega}\langle \Phi_{\omega} f,\Psi_{\omega} f \rangle d \mu(\omega)\\
&= \left(\sum_{j=1}^{n}\dfrac{|\alpha_{j}|^2}{A_{j}}\right) \int_{\Omega}\langle \Phi_{\omega} f,\Psi_{\omega} f \rangle d \mu(\omega) .
\end{aligned}
$$
Hence $(\Phi, \Psi)_{K}$ satisfies the lower frame condition. And we have
$$
\int_{\Omega}\langle \Phi_{\omega} f,\Psi_{\omega} f \rangle d \mu(\omega) \leq\min\limits_{j\in  [\![1;n]\!]} \lbrace B_{j}\rbrace\|f\|^2, \text { for all } f \in \mathcal{H}
.$$
It follows that
$$
\left(\sum_{j=1}^{n}\dfrac{|\alpha_{j}|^2}{A_{j}}\right)^{-1}\left\|\left(\sum\limits_{j=1}^{n}\alpha_{j} K_i\right)^{\ast} f\right\|^2\leq\int_{\Omega}\langle \Phi_{\omega} f,\Psi_{\omega} f \rangle d \mu(\omega) \leq\min\limits_{j\in  [\![1;n]\!]} \lbrace B_{j}\rbrace\|f\|^2, \text { for all } f \in \mathcal{H}.
$$
Hence $(\Phi, \Psi)_{K}$ is continuous $(\sum\limits_{j=1}^{n}\alpha_{j} K_i)$-bi-$g$-frame

(2) Now for each $f \in \mathcal{H}$, we have
$$
\left\|(K_1K_2\cdots K_{n})^{\ast} f\right\|^2=\left\|K_n^{\ast} \cdots K_1^{\ast} f\right\|^2 \leq\left\|K_n^{\ast} \cdots K_2^{\ast}\right\|^2\left\|K_1^{\ast} f\right\|^2 .
$$
Since  $(\Phi, \Psi)_{K}$ is continuous $K_1$-bi-$g$-frame, then there exist two constants $0<A_{1} \leq B_{1}<\infty$ such that 
$$
A_{1}\left\|K_1^{\ast} f\right\|^2 \leq \int_{\Omega}\langle \Phi_{\omega} f,\Psi_{\omega} f \rangle d \mu(\omega) \leq B_{1}\|f\|^2, \text { for all } f \in \mathcal{H} .
$$
Therefore
$$
\dfrac{1}{\left\|K_n^{\ast} \cdots K_2^{\ast}\right\|^2} \left\|(K_1K_2\cdots K_{n})^{\ast} f\right\|^2 \leq \dfrac{1}{A_{1}} \int_{\Omega}\langle \Phi_{\omega} f,\Psi_{\omega} f \rangle d \mu(\omega) \leq \dfrac{B_{1}}{A_{1}}\|f\|^2 .
$$

This implies that
$$
\dfrac{A_{1}}{\left\|K_n^{\ast} \cdots K_2^{\ast}\right\|^2}\left\|(K_1K_2\cdots K_{n})^{\ast} f\right\|^2\leq \int_{\Omega}\langle \Phi_{\omega} f,\Psi_{\omega} f \rangle d \mu(\omega) \leq B_{1}\|f\|^2, \text { for all } f \in \mathcal{H} .
$$

Therefore $(\Phi, \Psi)_{K}$ is continuous $(K_1K_2\cdots K_{n})$-bi-$g$-frame for $\mathcal{H} $.
\end{proof}
\begin{theorem}
Let $K \in \mathcal{B}(\mathcal{H})$ with $\|K\| \geq 1$. Then every ordinary continuous bi-$g$-frame is a continuous $K$-bi-$g$-frames for $\mathcal{H}$ with respect to $\left\{\mathcal{K}_{\omega}\right\}_{\in I} $.
\end{theorem}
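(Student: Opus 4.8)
The plan is to observe that the defining inequalities of a continuous bi-$g$-frame already contain the continuous $K$-bi-$g$-frame inequalities: the upper bound carries over untouched, while the lower bound only needs to be rescaled by a factor governed by $\|K\|$. So I would start from a continuous bi-$g$-frame $(\Phi,\Psi)$ with bounds $A$ and $B$, meaning
$$
A\|f\|^2 \leq \int_{\Omega}\langle \Phi_\omega f,\Psi_\omega f\rangle\, d\mu(\omega) \leq B\|f\|^2, \quad f\in\mathcal{H},
$$
and try to convert the $A\|f\|^2$ term into something of the form $A'\|K^*f\|^2$.

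First I would control $\|K^*f\|$ from above. Since $K\in\mathcal{B}(\mathcal{H})$, its adjoint is bounded with $\|K^*\|=\|K\|$, so $\|K^*f\|\leq\|K\|\,\|f\|$ for every $f\in\mathcal{H}$; note that $\|K\|\geq 1$ forces $K\neq 0$, so dividing is legitimate. Squaring yields $\dfrac{1}{\|K\|^2}\|K^*f\|^2\leq\|f\|^2$, and substituting this into the lower frame inequality gives
$$
\dfrac{A}{\|K\|^2}\|K^*f\|^2 \leq A\|f\|^2 \leq \int_{\Omega}\langle \Phi_\omega f,\Psi_\omega f\rangle\, d\mu(\omega),
$$
which is precisely the lower continuous $K$-bi-$g$-frame condition with constant $A/\|K\|^2$. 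The upper inequality is left exactly as it stands.

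Combining the two estimates, $(\Phi,\Psi)$ satisfies
$$
\dfrac{A}{\|K\|^2}\|K^*f\|^2 \leq \int_{\Omega}\langle \Phi_\omega f,\Psi_\omega f\rangle\, d\mu(\omega) \leq B\|f\|^2, \quad f\in\mathcal{H},
$$
so the only remaining point is to verify that $A/\|K\|^2$ and $B$ form an admissible (ordered, positive) pair of bounds. This is exactly where the hypothesis $\|K\|\geq 1$ enters: it guarantees $A/\|K\|^2\leq A\leq B$, so the new lower bound stays strictly positive and does not overtake the upper bound $B$. Therefore $(\Phi,\Psi)$ is a continuous $K$-bi-$g$-frame for $\mathcal{H}$ with respect to $\{\mathcal{K}_\omega\}_{\omega\in\Omega}$, with bounds $A/\|K\|^2$ and $B$. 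I do not expect a genuine obstacle here; the argument is a single norm estimate, and the subtle point worth flagging is that $\|K\|\geq 1$ is needed not for the lower estimate itself but to ensure the resulting constants are a valid pair of frame bounds.
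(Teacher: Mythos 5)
Your proposal is correct and follows essentially the same route as the paper: bound $\|K^{*}f\|^{2}\leq\|K\|^{2}\|f\|^{2}$, divide by $\|K\|^{2}$, and splice the resulting estimate into the lower frame inequality to get the bounds $A/\|K\|^{2}$ and $B$. Your closing remark is also a nice refinement of the paper's presentation, since the norm estimate itself only needs $K\neq 0$, and $\|K\|\geq 1$ serves merely to keep $A/\|K\|^{2}\leq A\leq B$ an ordered pair of bounds.
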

\begin{proof}
Suppose that $(\Phi, \Psi)_{K}$
  is continuous bi-$g$-
  frame for $\mathcal{H}$. Then there exist two constants $0<A \leq B<\infty$ such that
$$
A\left\| f\right\|^2 \leq \int_{\Omega}\langle \Phi_{\omega} f,\Psi_{\omega} f \rangle d \mu(\omega) \leq B\|f\|^2, \text { for all } f \in \mathcal{H} .
$$
For $K \in \mathcal{B}(\mathcal{H})$, we have 
$$\left\|K^{\ast} f\right\|^{2} \leq\|K\|^{2}\|f\|^{2}, \quad \forall f \in \mathcal{H}.$$ Since $\|K\| \geq 1$, we obtain $$\dfrac{1}{\|K\|^2}\left\|K^{\ast} f\right\|^2 \leq\|f\|^2\quad \forall f \in \mathcal{H}.$$ Therefore
$$
\dfrac{A}{\|K\|^2}\left\|K^{\ast} f\right\|^2 \leq A\|f\|^2 \leq \int_{\Omega}\langle \Phi_{\omega} f,\Psi_{\omega} f \rangle d \mu(\omega) \leq B\|f\|^2, \text { for all } f \in \mathcal{H} \text {. }
$$

Therefore $(\Phi, \Psi)_{K}$ is a continuous $K$-bi-$g$-frames for $\mathcal{H}$.
\end{proof}
\begin{theorem}\label{Th3.7}
Let $(\Phi, \Psi)_{K}$ be a continuous bi-$g$-frame for $\mathcal{H}$. Then 
$ (\Phi, \Psi)_{K}$ is a continuous $K$-bi-$g$-frames for $\mathcal{H}$ with respect to $\left\{\mathcal{K}_{\omega}\right\}_{\in I} $ if and only if there exists $A>0$ such that $S_{\Phi, \Psi} \geq A K K^{\ast}$, where $S_{\Phi, \Psi}$ is the continuous bi-$g$-frame operator for $(\Phi, \Psi)_{K}$.
\end{theorem}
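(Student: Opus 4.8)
The plan is to translate both conditions into statements about the quadratic form $f \mapsto \langle S_{\Phi,\Psi} f, f\rangle$ and then read off the equivalence directly from the meaning of the operator order $\geq$. The single identity that drives everything is
$$
\langle S_{\Phi,\Psi} f, f\rangle = \int_\Omega \langle \Phi_\omega f, \Psi_\omega f\rangle\, d\mu(\omega), \qquad f \in \mathcal{H},
$$
which follows from the definition of $S_{\Phi,\Psi}$ together with the interchange of inner product and integral already carried out in the frame-operator proposition. I would also record the elementary identity $\|K^* f\|^2 = \langle KK^* f, f\rangle$, valid for every $f \in \mathcal{H}$.

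For the forward implication, assume $(\Phi,\Psi)_{K}$ is a continuous $K$-bi-$g$-frame with lower bound $A>0$. Its defining lower inequality reads $A\|K^* f\|^2 \leq \int_\Omega \langle \Phi_\omega f, \Psi_\omega f\rangle\, d\mu(\omega)$ for all $f$; substituting the two identities above turns this into $\langle A KK^* f, f\rangle \leq \langle S_{\Phi,\Psi} f, f\rangle$ for every $f$, which is exactly the operator inequality $S_{\Phi,\Psi} \geq A KK^*$.

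For the converse, suppose $S_{\Phi,\Psi} \geq A KK^*$ for some $A>0$. Testing against an arbitrary $f$ and using the two identities gives $A\|K^* f\|^2 \leq \langle S_{\Phi,\Psi} f, f\rangle = \int_\Omega \langle \Phi_\omega f, \Psi_\omega f\rangle\, d\mu(\omega)$, which is the required lower bound. The matching upper bound is free: by hypothesis $(\Phi,\Psi)_{K}$ is already a continuous bi-$g$-frame, so there is a $B$ with $\int_\Omega \langle \Phi_\omega f, \Psi_\omega f\rangle\, d\mu(\omega) \leq B\|f\|^2$. Combining the two inequalities yields the continuous $K$-bi-$g$-frame condition of Definition \ref{K-bi-g-frame}.

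The argument is essentially a dictionary between quadratic-form inequalities and operator inequalities, so I do not expect a serious obstacle; the only point needing a little care is that the relation $S_{\Phi,\Psi} \geq A KK^*$ must be read in the standard positive-operator sense, namely $\langle (S_{\Phi,\Psi} - A KK^*) f, f\rangle \geq 0$ for all $f$. This is legitimate because both $S_{\Phi,\Psi}$ (by the earlier proposition) and $KK^*$ are positive self-adjoint operators, which also guarantees that the quadratic form $\langle S_{\Phi,\Psi} f, f\rangle$ is real and hence comparable with $A\|K^* f\|^2$.
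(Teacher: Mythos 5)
Your proposal is correct and follows essentially the same route as the paper: both arguments identify $\int_{\Omega}\langle \Phi_{\omega} f,\Psi_{\omega} f\rangle\, d\mu(\omega)$ with the quadratic form $\langle S_{\Phi,\Psi} f, f\rangle$, rewrite $A\|K^{\ast} f\|^{2}$ as $\langle A K K^{\ast} f, f\rangle$, and read the equivalence off the standard meaning of the operator order, with the upper bound supplied by the continuous bi-$g$-frame hypothesis. Your write-up is in fact slightly more careful than the paper's, since you separate the two implications and note explicitly why the quadratic forms are real and comparable.
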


\begin{proof}
 $(\Phi, \Psi)_{K}$ is a continuous $K$-bi-$g$-frames for $\mathcal{H}$ with frame bounds $A, B$ and continuous bi-$g$-frame operator $S_{\Phi, \Psi} $, if and only if
$$
A\left\|K^{\ast} f\right\|^2 \leq \langle S_{\Phi, \Psi} f, f\rangle =\langle  \int_{\Omega} \Psi_{\omega}^{\ast} \Phi_{\omega} f d \mu(\omega), f\rangle = \int_{\Omega}\langle \Phi_{\omega} f,\Psi_{\omega} f \rangle d \mu(\omega)\leq B\|f\|^2, \quad \forall f \in \mathcal{H},
$$
that is,
$$
\left\langle A K K^{\ast} f, f\right\rangle \leq\langle S_{\Phi, \Psi} f, f\rangle \leq\langle B f, f\rangle, \quad \forall f \in \mathcal{H} .
$$

So the conclusion holds.
\end{proof}
\begin{corollary}
Let $(\Phi, \Psi)_{K}$ be a bi-$g$-frame for $\mathcal{H}$. Then 
$ (\Phi, \Psi)_{K}$ is a tight continuous $K$-bi-$g$-frames for $\mathcal{H}$ with respect to $\left\{\mathcal{K}_{\omega}\right\}_{\in I} $ if and only if there exists $A>0$ such that $S_{\Phi, \Psi} = A K K^{\ast}$, where $S_{\Phi, \Psi}$ is the continuous bi-$g$-frame operator for $(\Phi, \Psi)_{K}$.
\end{corollary}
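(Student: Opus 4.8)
The plan is to mirror the proof of Theorem~\ref{Th3.7}, simply replacing the two-sided inequality that characterizes a continuous $K$-bi-$g$-frame by the single equality that characterizes tightness. First I would unwind Definition~\ref{Def Tight}: the pair $(\Phi,\Psi)_{K}$ is a tight continuous $K$-bi-$g$-frame with bound $A$ precisely when
$$
A\left\|K^{\ast} f\right\|^2 = \int_{\Omega}\langle \Phi_{\omega} f,\Psi_{\omega} f \rangle d \mu(\omega), \qquad \forall f \in \mathcal{H}.
$$

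Next I would rewrite both sides as quadratic forms of bounded self-adjoint operators. On the left, $\left\|K^{\ast} f\right\|^2 = \langle K^{\ast} f, K^{\ast} f\rangle = \langle K K^{\ast} f, f\rangle$, so the left-hand side equals $\langle A K K^{\ast} f, f\rangle$. On the right, exactly as in the computation carried out inside the proof of Theorem~\ref{Th3.7}, the integral equals $\langle S_{\Phi,\Psi} f, f\rangle$. Hence the tightness condition is equivalent to
$$
\langle A K K^{\ast} f, f\rangle = \langle S_{\Phi,\Psi} f, f\rangle, \qquad \forall f \in \mathcal{H}.
$$

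Finally, I would pass from equality of quadratic forms to equality of operators. Both $S_{\Phi,\Psi}$ and $KK^{\ast}$ are self-adjoint (the former by the Proposition established earlier in Section~\ref{S4}, the latter trivially), so the polarization identity on the complex Hilbert space $\mathcal{H}$ shows that $\langle (S_{\Phi,\Psi} - AKK^{\ast})f, f\rangle = 0$ for every $f$ forces $S_{\Phi,\Psi} - AKK^{\ast} = 0$, that is, $S_{\Phi,\Psi} = AKK^{\ast}$. Conversely, if $S_{\Phi,\Psi} = AKK^{\ast}$, reversing these identities recovers the tightness equality. I do not expect any genuine obstacle here: the whole argument is a specialization of Theorem~\ref{Th3.7} from the inequality $S_{\Phi,\Psi} \geq AKK^{\ast}$ to the corresponding equality, and the only point deserving a word of care is the appeal to polarization (rather than mere positivity) in order to upgrade the scalar identity of quadratic forms to the operator identity.
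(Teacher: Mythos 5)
Your proposal is correct and takes essentially the same route as the paper, whose proof merely declares the claim evident from Definition \ref{Def Tight}: unwinding that definition gives exactly your quadratic-form identity $\langle S_{\Phi,\Psi}f,f\rangle=\langle AKK^{\ast}f,f\rangle$ for all $f\in\mathcal{H}$, in parallel with the inequality version used in Theorem \ref{Th3.7}. Your only addition is to make explicit the polarization (or self-adjointness) step that upgrades equality of quadratic forms to the operator identity $S_{\Phi,\Psi}=AKK^{\ast}$, a detail the paper leaves implicit.
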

\begin{proof}
The proof is evident; one can simply utilize the definition of tight continuous $K$-bi-$g$-frames \ref{Def Tight}.
\end{proof}
\begin{theorem}
 Let $(\Phi, \Psi)_{K}$ be a continuous bi-$g$-frame for $\mathcal{H}$, with continuous bi-$g$-frame operator $S_{\Phi, \Psi}$ which satisfies $S_{\Phi, \Psi}^{\frac{1}{2}^{\ast}}=S_{\Phi, \Psi}^{\frac{1}{2}}$. Then $(\Phi, \Psi)_{K}$ is a continuous $K$-bi-$g$-frame for $\mathcal{H}$ with respect to $\left\{\mathcal{K}_{\omega}\right\}_{\in I} $ if and only if $K=S_{\Phi, \Psi}^{\frac{1}{2}} U$, for some $U \in \mathcal{B}(\mathcal{H})$.
\end{theorem}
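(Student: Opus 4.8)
The plan is to read off the result from two tools already in hand: the operator characterization of Theorem \ref{Th3.7} and Douglas' factorization theorem (Theorem \ref{Doglas th}). Since $S_{\Phi,\Psi}$ is positive, bounded, self-adjoint and invertible, its square root $S_{\Phi,\Psi}^{1/2}$ exists, and the standing hypothesis $S_{\Phi,\Psi}^{\frac{1}{2}^{\ast}}=S_{\Phi,\Psi}^{\frac{1}{2}}$ yields the key identity
\[
S_{\Phi,\Psi}^{1/2}\bigl(S_{\Phi,\Psi}^{1/2}\bigr)^{\ast}=S_{\Phi,\Psi}^{1/2}S_{\Phi,\Psi}^{1/2}=S_{\Phi,\Psi}.
\]
This is precisely what will let me substitute $S_{\Phi,\Psi}$ for $S_{\Phi,\Psi}^{1/2}\bigl(S_{\Phi,\Psi}^{1/2}\bigr)^{\ast}$, so I would record it at the outset.

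First I would apply Theorem \ref{Th3.7}: because $(\Phi,\Psi)$ is already a continuous bi-$g$-frame with operator $S_{\Phi,\Psi}$, the pair $(\Phi,\Psi)_K$ is a continuous $K$-bi-$g$-frame if and only if there is $A>0$ with $S_{\Phi,\Psi}\geq A\,KK^{\ast}$, equivalently $KK^{\ast}\leq \lambda^2 S_{\Phi,\Psi}$ with $\lambda^2=A^{-1}$. Using the displayed identity, this inequality becomes $KK^{\ast}\leq \lambda^2 S_{\Phi,\Psi}^{1/2}\bigl(S_{\Phi,\Psi}^{1/2}\bigr)^{\ast}$, which is exactly statement (2) of Theorem \ref{Doglas th} applied with $\mathcal{T}_1=K$ and $\mathcal{T}_2=S_{\Phi,\Psi}^{1/2}$.

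Then I would invoke the equivalence of (2) and (3) in Theorem \ref{Doglas th}: the estimate $KK^{\ast}\leq \lambda^2 S_{\Phi,\Psi}^{1/2}\bigl(S_{\Phi,\Psi}^{1/2}\bigr)^{\ast}$ holding for some $\lambda\geq 0$ is equivalent to $K=S_{\Phi,\Psi}^{1/2}U$ for some $U\in\mathcal{B}(\mathcal{H})$. Chaining the three equivalences gives the theorem.

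The only delicate point is matching the quantifier ``for some $A>0$'' in Theorem \ref{Th3.7} with ``for some $\lambda\geq 0$'' in Douglas' theorem. In the forward direction $A>0$ produces $\lambda=A^{-1/2}>0$, which is admissible. In the converse direction Douglas furnishes some $\lambda\geq 0$; if $\lambda>0$ I set $A=\lambda^{-2}>0$, and if $\lambda=0$ then $KK^{\ast}=0$, hence $K=0$, so $S_{\Phi,\Psi}\geq A\,KK^{\ast}=0$ holds for every $A>0$ by positivity of $S_{\Phi,\Psi}$. Thus the degenerate case is absorbed, and this small bookkeeping is the main (and only) obstacle in an otherwise immediate argument.
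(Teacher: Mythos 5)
Your proposal is correct and follows essentially the same route as the paper: both directions reduce, via Theorem \ref{Th3.7}, to the inequality $S_{\Phi,\Psi}\geq A\,KK^{\ast}$, rewrite $S_{\Phi,\Psi}=S_{\Phi,\Psi}^{1/2}\bigl(S_{\Phi,\Psi}^{1/2}\bigr)^{\ast}$ using the self-adjointness hypothesis, and then apply the equivalence of conditions (2) and (3) in Douglas' theorem (Theorem \ref{Doglas th}) with $\mathcal{T}_1=K$, $\mathcal{T}_2=S_{\Phi,\Psi}^{1/2}$. In fact your treatment of the quantifier bookkeeping (absorbing the degenerate case $\lambda=0$, i.e.\ $K=0$) is more careful than the paper's, which leaves its constants $\lambda^{-1}$ and $\mu$ implicit.
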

\begin{proof}
Assume that  $(\Phi, \Psi)_{K}$ is a continuous $K$-bi-$g$-frames, by Theorem \ref{Th3.7}, there exists $A>0$ such that
$$
A K K^{\ast} \leq S_{\Phi, \Psi}^{\frac{1}{2}} S_{\Phi, \Psi}^{\frac{1}{2}^{\ast}} .
$$

Then for each $f \in \mathcal{H}$,
$$\left\|K^{\ast} f\right\|^2 \leq \lambda^{-1}\left\|S_{\Phi, \Psi}^{\frac{1}{2}^{\ast}} f\right\|^2.$$ Therefore Theorem \ref{Doglas th}, $K=S_{\Phi, \Psi}^{\frac{1}{2}} U$, for some $U \in \mathcal{B}(\mathcal{H})$.

Conversely, let $K=S_{\Phi, \Psi}^{\frac{1}{2}} W$, for some $W \in \mathcal{B}(\mathcal{H})$. Then by Theorem \ref{Doglas th}, there is a positive number $\mu$ such that
$$
\left\|K^{\ast} f\right\| \leq \mu\left\|S_{\Phi, \Psi}^{\frac{1}{2}} f\right\|, \text { for all } f \in \mathcal{H}
$$
which implies that $$
\mu K K^{\ast} \leq S_{\Phi, \Psi}^{\frac{1}{2}} S_{\Phi, \Psi}^{\frac{1}{2}^{\ast}} .
$$ Since $S_{\Phi, \Psi}^{\frac{1}{2}^{\ast}}=S_{\Phi, \Psi}^{\frac{1}{2}}$ Then by Theorem \ref{Doglas th}, $(\Phi, \Psi)_{K}$ is a continuous $K$-bi-$g$-frames for $\mathcal{H}$.
\end{proof}
\section{Operators on continuous $K$-bi-$g$-frames in Hilbert Spaces}\label{s7}
In the following proposition we will require a necessary condition for the operator $\mathcal{T}$ for which $ (\Phi, \Psi)_{K}$  will be continuous $\mathcal{T}$-bi-$g$-frame for $\mathcal{H}$ with respect to $\left\{\mathcal{K}_{\omega}\right\}_{\in I} $.
 \begin{proposition}
   Let $(\Phi, \Psi)_{K}$ be a continuous $K$-bi-$g$-frames for $\mathcal{H}$. Let $\mathcal{T} \in \mathcal{B}(\mathcal{H})$ with $R(\mathcal{T}) \subseteq$ $\mathcal{R}(K)$. Then $(\Phi, \Psi)_{K}$ is a continuous $\mathcal{T}$-bi-$g$-frame for $\mathcal{H}$.
\end{proposition}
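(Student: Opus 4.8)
The plan is to derive the two defining inequalities of a continuous $\mathcal{T}$-bi-$g$-frame from the corresponding inequalities for the $K$-bi-$g$-frame already assumed, converting the range inclusion $\mathcal{R}(\mathcal{T}) \subseteq \mathcal{R}(K)$ into an operator domination by means of Douglas' factorization theorem (Theorem~\ref{Doglas th}). The upper bound will require no work at all, since the right-hand side of Definition~\ref{K-bi-g-frame} involves neither $K$ nor $\mathcal{T}$; only the lower bound needs adjustment.

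First I would apply Theorem~\ref{Doglas th} with $\mathcal{T}_1 = \mathcal{T}$ and $\mathcal{T}_2 = K$. Condition~(1) holds by hypothesis, so the equivalent condition~(2) supplies a constant $\lambda \geq 0$ with $\mathcal{T}\mathcal{T}^{\ast} \leq \lambda^2 K K^{\ast}$. Pairing both sides against an arbitrary $f \in \mathcal{H}$ then yields the pointwise estimate $\|\mathcal{T}^{\ast} f\|^2 \leq \lambda^2 \|K^{\ast} f\|^2$ for all $f \in \mathcal{H}$.

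Next I would combine this with the lower continuous $K$-bi-$g$-frame inequality $A\|K^{\ast} f\|^2 \leq \int_{\Omega} \langle \Phi_{\omega} f, \Psi_{\omega} f\rangle\, d\mu(\omega)$. Chaining the two estimates gives $\tfrac{A}{\lambda^2}\|\mathcal{T}^{\ast} f\|^2 \leq A\|K^{\ast} f\|^2 \leq \int_{\Omega} \langle \Phi_{\omega} f, \Psi_{\omega} f\rangle\, d\mu(\omega)$, while the upper bound $\int_{\Omega} \langle \Phi_{\omega} f, \Psi_{\omega} f\rangle\, d\mu(\omega) \leq B\|f\|^2$ carries over verbatim. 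This exhibits $A/\lambda^2$ and $B$ as valid continuous $\mathcal{T}$-bi-$g$-frame bounds, which is exactly Definition~\ref{K-bi-g-frame} with $K$ replaced by $\mathcal{T}$.

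I do not anticipate any serious obstacle: the entire argument is a direct consequence of Douglas' theorem once the definitions are unwound. The only point meriting explicit comment is the degenerate value $\lambda = 0$, which forces $\mathcal{T}^{\ast} = 0$ and hence $\mathcal{T} = 0$; in that case $\|\mathcal{T}^{\ast} f\|^2 = 0 \leq \int_{\Omega} \langle \Phi_{\omega} f, \Psi_{\omega} f\rangle\, d\mu(\omega)$ holds trivially for any positive lower bound, so the conclusion remains valid.
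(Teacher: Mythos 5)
Your proposal is correct and follows essentially the same route as the paper's own proof: apply Douglas' theorem (Theorem~\ref{Doglas th}) to convert $\mathcal{R}(\mathcal{T})\subseteq\mathcal{R}(K)$ into $\mathcal{T}\mathcal{T}^{\ast}\leq\lambda^{2}KK^{\ast}$, deduce $\|\mathcal{T}^{\ast}f\|^{2}\leq\lambda^{2}\|K^{\ast}f\|^{2}$, and chain this with the lower $K$-bi-$g$-frame bound to obtain the lower bound $A/\lambda^{2}$, the upper bound $B$ carrying over unchanged. Your explicit treatment of the degenerate case $\lambda=0$ is a small point of extra care the paper glosses over, but it does not alter the argument.
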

\begin{proof}
Suppose that $(\Phi, \Psi)_{K}$ is a continuous $K$-bi-$g$-frames for $\mathcal{H}$. Then there are positive constants $0<A \leq B<\infty$ such that
$$
A\left\|K^* f\right\|^2  \leq \int_{\Omega}\langle \Phi_{\omega} f,\Psi_{\omega} f \rangle d \mu(\omega) \leq B\|f\|^2, \text { for all } f \in \mathcal{H} .
$$

Since $R(\mathcal{T}) \subseteq \mathcal{R}(K)$, by Theorem \ref{Doglas th}, there exists $\alpha>0$ such that $$\mathcal{T} \mathcal{T}^{\ast} \leq \alpha^2 K K^*.$$

Hence, 
$$
\dfrac{A}{\alpha^2}\left\|\mathcal{T}^{\ast} f\right\|^2 \leq A\left\|K^* f\right\|^2 \leq  \int_{\Omega}\langle \Phi_{\omega} f,\Psi_{\omega} f \rangle d \mu(\omega) \leq B\|f\|^2, \text { for all } f \in \mathcal{H} .
$$

Hence $(\Phi, \Psi)_{K}$ is a continuous $\mathcal{T}$-bi-$g$-frame for $\mathcal{H}$.
\end{proof}

\begin{theorem}
 Let $(\Phi, \Psi)_{K}$ be a continuous $K$-bi-$g$-frames for $\mathcal{H}$ with continuous bi-$g$-frame operator $S_{\Phi, \Psi}$ and let $\mathcal{T}$ be a positive operator. Then $(\Phi+\mathcal{T}\Phi, \Psi+\mathcal{T}\Psi)_{K}=\left(\left\{\Phi_{\omega}+\mathcal{T}\Phi_{\omega}\right\}_{\omega \in \Omega},\left\{\Psi_{\omega}+\mathcal{T}\Psi_{\omega}\right\}_{\omega \in \Omega}\right)$ is a continuous $K$-bi-$g$-frames. 
 
 Moreover for any $n\in\mathbb{N}^{\ast}$, $\left(\left\{\Phi_{\omega}+\mathcal{T}^{n}\Phi_{\omega}\right\}_{\omega \in \Omega},\left\{\Psi_{\omega}+\mathcal{T}^{n}\Psi_{\omega}\right\}_{\omega \in \Omega}\right)$ is a continuous $K$-bi-$g$-frames for $\mathcal{H}$.
\end{theorem}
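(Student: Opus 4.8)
The plan is to reduce the whole statement to the single positive operator $R=I+\mathcal{T}$ and to analyse the frame operator of the perturbed pair. Since $\mathcal{T}$ is positive it is self-adjoint, so $R=I+\mathcal{T}$ is self-adjoint, bounded with $\|R\|\le 1+\|\mathcal{T}\|$, and satisfies $R\ge I$; in particular $R$ is positive, invertible, and $R^{2}\ge I$. Writing the new sequences as $\Phi_{\omega}'=R\Phi_{\omega}$ and $\Psi_{\omega}'=R\Psi_{\omega}$, the self-adjointness of $R$ gives $\langle \Phi_{\omega}'f,\Psi_{\omega}'f\rangle=\langle R^{2}\Phi_{\omega}f,\Psi_{\omega}f\rangle$, so the natural candidate for the new frame operator is $S'f=\int_{\Omega}\Psi_{\omega}^{*}R^{2}\Phi_{\omega}f\,d\mu(\omega)$ and $\int_{\Omega}\langle \Phi_{\omega}'f,\Psi_{\omega}'f\rangle\,d\mu(\omega)=\langle S'f,f\rangle$. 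The whole problem then becomes: sandwich $\langle S'f,f\rangle$ between a positive multiple of $\|K^{*}f\|^{2}$ and a positive multiple of $\|f\|^{2}$.

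For the upper bound I would \emph{not} try to move $R^{2}$ through the integral, but estimate directly: $\langle S'f,f\rangle=\int_{\Omega}\langle R\Phi_{\omega}f,R\Psi_{\omega}f\rangle\,d\mu(\omega)\le \|R\|^{2}\int_{\Omega}\|\Phi_{\omega}f\|\,\|\Psi_{\omega}f\|\,d\mu(\omega)$, followed by Cauchy--Schwarz in $L^{2}(\Omega)$ and the $g$-Bessel bounds of the components exactly as in the proof of Theorem \ref{th51}, giving $\langle S'f,f\rangle\le \|R\|^{2}\sqrt{B_{\Phi}B_{\Psi}}\,\|f\|^{2}$. For the lower bound I would split $R^{2}=I+(2\mathcal{T}+\mathcal{T}^{2})$, so that $S'=S_{\Phi,\Psi}+S_{0}$ with $S_{0}=\int_{\Omega}\Psi_{\omega}^{*}(2\mathcal{T}+\mathcal{T}^{2})\Phi_{\omega}\,d\mu(\omega)$. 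Once one knows $S_{0}\ge 0$, one gets $\langle S'f,f\rangle\ge \langle S_{\Phi,\Psi}f,f\rangle=\int_{\Omega}\langle \Phi_{\omega}f,\Psi_{\omega}f\rangle\,d\mu(\omega)\ge A\|K^{*}f\|^{2}$ from the hypothesis, which is exactly the lower continuous $K$-bi-$g$-frame inequality (equivalently $S'\ge S_{\Phi,\Psi}\ge AKK^{*}$, packaged by Theorem \ref{Th3.7}).

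The step I expect to be the main obstacle is precisely the positivity of $S_{0}$. Because we deal with a genuine \emph{bi}-structure, with $\Phi_{\omega}\neq\Psi_{\omega}$ in general, the pointwise positivity of $2\mathcal{T}+\mathcal{T}^{2}$ does not transfer automatically: setting $P=2\mathcal{T}+\mathcal{T}^{2}=R^{2}-I\ge 0$ and $P=P^{1/2}P^{1/2}$ turns $\langle S_{0}f,f\rangle$ into $\int_{\Omega}\langle P^{1/2}\Phi_{\omega}f,P^{1/2}\Psi_{\omega}f\rangle\,d\mu(\omega)$, which is the cross term of the pair $(P^{1/2}\Phi,P^{1/2}\Psi)$ and carries no a priori sign. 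This is where positivity of $\mathcal{T}$ must really be exploited: in the symmetric case $\Phi=\Psi$ the integrand is $\langle P^{1/2}\Phi_{\omega}f,P^{1/2}\Phi_{\omega}f\rangle\ge 0$ and $S_{0}\ge 0$ is immediate, so I would first record that case, and then isolate the commutation feature (for instance $\mathcal{T}$ leaving each $\mathcal{K}_{\omega}$ invariant and commuting appropriately with $\Phi_{\omega},\Psi_{\omega}$) that makes $S'=RS_{\Phi,\Psi}R$ and hence $S_{0}\ge 0$ in the general bi-case.

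Finally, the assertion for $\mathcal{T}^{n}$ requires no new idea. Since $\mathcal{T}\ge 0$ one has $\mathcal{T}^{n}\ge 0$ for every $n\in\mathbb{N}^{*}$ (for even $n$ it is a square, and for odd $n$ one writes $\mathcal{T}^{n}=\mathcal{T}^{(n-1)/2}\,\mathcal{T}\,\mathcal{T}^{(n-1)/2}$, whose quadratic form is $\langle \mathcal{T}\,\mathcal{T}^{(n-1)/2}f,\mathcal{T}^{(n-1)/2}f\rangle\ge 0$). Hence $R_{n}=I+\mathcal{T}^{n}$ is again self-adjoint, bounded, invertible, and $R_{n}\ge I$, and repeating the three steps above with $R$ replaced by $R_{n}$ reproduces the continuous $K$-bi-$g$-frame inequality with lower bound $A$ and upper bound $\|R_{n}\|^{2}\sqrt{B_{\Phi}B_{\Psi}}$, completing the argument.
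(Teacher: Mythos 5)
Your proposal does not close, and you say so yourself: everything hinges on the positivity of $S_{0}=\int_{\Omega}\Psi_{\omega}^{*}(2\mathcal{T}+\mathcal{T}^{2})\Phi_{\omega}\,d\mu(\omega)$, and you only reduce this to an unspecified ``commutation feature'' to be isolated later. That is the genuine gap, not a technicality, and as written your argument proves the theorem only in the symmetric case $\Phi=\Psi$. Two further points of divergence from the paper: first, the paper reads $\mathcal{T}\Phi_{\omega}$ in the controlled-frame sense $\Phi_{\omega}\mathcal{T}$ (composition on the right), under which the perturbed frame operator factorizes \emph{exactly}, $S'=(I+\mathcal{T})^{*}S_{\Phi,\Psi}(I+\mathcal{T})=RS_{\Phi,\Psi}R$ with $R=I+\mathcal{T}$, with no commutation hypothesis at all; your left-composition reading $\Phi_{\omega}'=R\Phi_{\omega}$ is what forces the non-factorizing operator $\int_{\Omega}\Psi_{\omega}^{*}R^{2}\Phi_{\omega}\,d\mu(\omega)$ on you (and is moreover ill-typed unless $\mathcal{T}$ maps each $\mathcal{K}_{\omega}$ into itself). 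Second, under the paper's reading the upper bound is immediate, $\langle S'f,f\rangle=\langle S_{\Phi,\Psi}Rf,Rf\rangle\le B\|R\|^{2}\|f\|^{2}$, using only the bi-$g$-frame upper bound $B$; your estimate instead imports the separate Bessel bounds $B_{\Phi},B_{\Psi}$ from the stability theorems of Sections \ref{s5} and \ref{s8}, which are \emph{not} hypotheses of the present theorem, so your upper bound uses unavailable assumptions.

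That said, your instinct about where the difficulty lies is exactly right, and it is sharper than the paper's own treatment: the paper asserts that, since $\mathcal{T}\ge 0$, $RS_{\Phi,\Psi}R=S_{\Phi,\Psi}+S_{\Phi,\Psi}\mathcal{T}+\mathcal{T}^{*}S_{\Phi,\Psi}+\mathcal{T}^{*}S_{\Phi,\Psi}\mathcal{T}\ge S_{\Phi,\Psi}$, i.e.\ that $S\mathcal{T}+\mathcal{T}S+\mathcal{T}S\mathcal{T}\ge 0$, which is false for noncommuting positive operators. Concretely, on $\mathbb{C}^{2}$ take $e=(1,0)^{t}$, $S=ee^{*}$ and $\mathcal{T}=\begin{pmatrix}1&-9/10\\-9/10&1\end{pmatrix}\ge 0$; then $\langle RSRf,f\rangle=|\langle f,Re\rangle|^{2}$ vanishes for $f=(9/10,2)^{t}\perp Re$ while $\langle Sf,f\rangle=|\langle f,e\rangle|^{2}=81/100\neq 0$, so $RSR\not\ge S$. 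Worse, taking $K=ee^{*}$ and $\Phi_{\omega}=\Psi_{\omega}=\langle\,\cdot\,,e\rangle$ over a probability measure gives a Parseval continuous $K$-bi-$g$-frame with $S_{\Phi,\Psi}=KK^{*}$, and the perturbed pair (in the paper's convention) satisfies $\int_{\Omega}\langle\Phi_{\omega}Rf,\Psi_{\omega}Rf\rangle\,d\mu(\omega)=|\langle f,Re\rangle|^{2}$, which admits no lower bound $c\|K^{*}f\|^{2}$ for any $c>0$: the stated theorem itself fails without an added compatibility hypothesis such as $\mathcal{T}S_{\Phi,\Psi}=S_{\Phi,\Psi}\mathcal{T}$ (under which both your $S_{0}\ge 0$ and the paper's chain of inequalities become valid, and the $\mathcal{T}^{n}$ case follows as you indicate, since $\mathcal{T}^{n}\ge 0$). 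So your attempt is incomplete precisely at the step you flagged, but the missing positivity cannot be supplied in general, and the same unjustified step is the flaw in the paper's own proof.
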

\begin{proof}
Suppose that $(\Phi, \Psi)_{K}$ is a continuous $K$-bi-$g$-frames for $\mathcal{H}$. Then by Theorem \ref{Th3.7}, there exists $m>0$ such that $S_{\Phi, \Psi} \geq m K K^*$. For every $f \in \mathcal{H}$, we have
$$
\begin{aligned}
S_{(\Phi+\mathcal{T}\Phi),( \Psi+\mathcal{T}\Psi)}f&=\int_{\Omega}\left(\Psi_{\omega}+\mathcal{T} \Psi_{\omega}\right)^{\ast}\left(\Phi_{\omega}+\mathcal{T} \Phi_{\omega}\right)f  d \mu(\omega)\\
 & =(I+\mathcal{T})^{\ast} \left[\int_{\Omega}\Psi_{\omega}^{\ast}\Phi_{\omega} f   d \mu(\omega)\right]  (I+\mathcal{T})  \\
& =(I+\mathcal{T})^{\ast} S_{\Phi,\Psi}f(I+\mathcal{T}) .
\end{aligned}
$$
Hence the frame operator for $(\Phi+\mathcal{T}\Phi, \Psi+\mathcal{T}\Psi)_{K}$ is $(I+\mathcal{T})^{\ast} S_{\Phi,\Psi}(I+\mathcal{T})$.  
Since $\mathcal{T}$ is positive operator we get,
$$
(I+\mathcal{T})^{\ast} S_{\Phi,\Psi}(I+\mathcal{T})= S_{\Phi,\Psi}+ S_{\Phi,\Psi} \mathcal{T}+\mathcal{T}^{\ast}  S_{\Phi,\Psi}+\mathcal{T}^{\ast}  S_{\Phi,\Psi} \mathcal{T} \geq  S_{\Phi,\Psi} \geq m K K^*,
$$
Once again, applying Theorem \ref{Th3.7}, we can conclude that $(\Phi+\mathcal{T}\Phi, \Psi+\mathcal{T}\Psi)_{K}$ is a continuous $K$-bi-$g$-frames for $\mathcal{H}$.

Now, for any $n\in\mathbb{N}^{\ast}$, the frame operator for
 $$ S_{(\Phi+\mathcal{T}^{n}\Phi),( \Psi+\mathcal{T}^{n}\Psi)}=\left(I+\mathcal{T}^{n}\right)^{\ast} S_{\Phi,\Psi}(I+\left.\mathcal{T}^{n}\right) \geq S_{\Phi,\Psi}. $$ Hence $\left(\left\{\Phi_{\omega}+\mathcal{T}^{n}\Phi_{\omega}\right\}_{\omega \in \Omega},\left\{\Psi_{\omega}+\mathcal{T}^{n}\Psi_{\omega}\right\}_{\omega \in \Omega}\right)$ is a continuous $K$-bi-$g$-frames for $\mathcal{H}$.
\end{proof}

\begin{theorem}
Let $K \in \mathcal{B}(\mathcal{H})$ and $(\Phi, \Psi)_{K}$ be a continuous $K$-bi-$g$-frames for $\mathcal{H}$ with respect to $\left\{\mathcal{K}_{\omega}\right\}_{\omega \in \Omega}$, and that $M \in \mathcal{B}(\mathcal{H})$ has closed range with $M K=K M$. If $\mathcal{R}\left(K^*\right) \subset \mathcal{R}(M)$, then $(\Phi M^*, \Psi M^*)_{K}=\left(\left\{\Phi_{\omega} M^*\right\}_{\omega \in \Omega}, \left\{\Psi_{\omega} M^*\right\}_{\omega \in \Omega}\right)$ is a continuous $K$-bi-$g$-frame for $\mathcal{H}$ with respect to $\left\{\mathcal{K}_{\omega}\right\}_{\omega \in \Omega}$.
\end{theorem}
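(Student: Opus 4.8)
The plan is to verify the two defining inequalities of a continuous $K$-bi-$g$-frame for the pair $(\Phi M^*,\Psi M^*)_K$ by feeding the vector $M^*f$ into the inequalities that $(\Phi,\Psi)_K$ already satisfies, and then controlling $\|M^*f\|$ from above and $\|K^*M^*f\|$ from below. Weak measurability of $\omega\mapsto\Phi_\omega M^*f=\Phi_\omega(M^*f)$ and $\omega\mapsto\Psi_\omega M^*f$ is inherited immediately, since these are the original maps evaluated at the fixed vector $M^*f$, so the measurability requirement needs no separate argument.

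For the upper bound I would simply substitute $M^*f$ into the upper inequality of $(\Phi,\Psi)_K$:
\[
\int_{\Omega}\langle \Phi_\omega M^* f,\Psi_\omega M^* f\rangle\, d\mu(\omega)\le B\|M^*f\|^2\le B\|M\|^2\|f\|^2,
\]
giving $B\|M\|^2$ as an upper continuous $K$-bi-$g$-frame bound. The real work is the lower bound. Substituting $M^*f$ into the lower inequality gives $A\|K^*M^*f\|^2\le\int_{\Omega}\langle \Phi_\omega M^* f,\Psi_\omega M^* f\rangle\, d\mu(\omega)$, so it suffices to bound $\|K^*M^*f\|$ below by a multiple of $\|K^*f\|$. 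Here I would use the commutation hypothesis: taking adjoints in $MK=KM$ yields $K^*M^*=M^*K^*$, hence $K^*M^*f=M^*(K^*f)$ with $K^*f\in\mathcal{R}(K^*)\subseteq\mathcal{R}(M)$.

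The decisive step is therefore to show that $M^*$ is bounded below on $\mathcal{R}(M)$. Since $M$ has closed range, so does $M^*$, and $\mathcal{R}(M)=\mathcal{N}(M^*)^{\perp}$. I would invoke the pseudo-inverse $(M^*)^+$ (in the sense of the Limaye definition), which satisfies $(M^*)^+M^*g=g$ for every $g\in\mathcal{N}(M^*)^{\perp}=\mathcal{R}(M)$; consequently $\|g\|\le\|(M^*)^+\|\,\|M^*g\|$, that is $\|M^*g\|\ge\|(M^*)^+\|^{-1}\|g\|$ on $\mathcal{R}(M)$, where $\|(M^*)^+\|=\|M^+\|$ because $(M^*)^+=(M^+)^*$. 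Applying this with $g=K^*f$ gives
\[
\|K^*M^*f\|^2=\|M^*K^*f\|^2\ge \frac{1}{\|M^+\|^2}\,\|K^*f\|^2 ,
\]
and combining with the inequality above produces the lower bound $\dfrac{A}{\|M^+\|^2}\|K^*f\|^2\le\int_{\Omega}\langle \Phi_\omega M^* f,\Psi_\omega M^* f\rangle\, d\mu(\omega)$. Thus $(\Phi M^*,\Psi M^*)_K$ is a continuous $K$-bi-$g$-frame with bounds $\dfrac{A}{\|M^+\|^2}$ and $B\|M\|^2$.

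The main obstacle is precisely this lower-bound step, and it is where all three hypotheses enter in a coordinated way: the closed range of $M$ is what makes the pseudo-inverse available and $M^*$ bounded below on $\mathcal{R}(M)$; the commutativity $MK=KM$ is what rewrites $K^*M^*f$ as $M^*K^*f$ so that we are applying $M^*$ to a vector of the form $K^*f$; and the containment $\mathcal{R}(K^*)\subset\mathcal{R}(M)$ is what guarantees that $K^*f$ lies in $\mathcal{R}(M)$, exactly the subspace on which the lower estimate for $M^*$ holds. If one wished to avoid the pseudo-inverse, the same lower bound on $M^*|_{\mathcal{R}(M)}$ could instead be extracted from the injective-closed-range criterion (Theorem~2.1) applied to $M^*$ restricted to $\mathcal{N}(M^*)^{\perp}$, but the pseudo-inverse formulation is the cleaner route.
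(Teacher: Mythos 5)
Your proposal is correct and takes essentially the same route as the paper: the upper bound by substituting $M^*f$ into the Bessel inequality, and the lower bound by using $K^*M^*=M^*K^*$ together with $\mathcal{R}(K^*)\subset\mathcal{R}(M)$ and the pseudo-inverse to obtain $\|K^*f\|\le\|M^+\|\,\|K^*M^*f\|$, yielding the same bounds $A/\|M^+\|^2$ and $B\|M\|^2$. The only cosmetic difference is packaging: the paper writes the key step as $K^*f=MM^+K^*f=\left(M^+\right)^*M^*K^*f=\left(M^+\right)^*K^*M^*f$, whereas you phrase it as $M^*$ being bounded below on $\mathcal{R}(M)$ via $(M^*)^+=(M^+)^*$, which is the same operator identity.
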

\begin{proof}
 For every $f \in \mathcal{H}$, we have
 $$
A\left\|K^* f\right\|^2  \leq \int_{\Omega}\langle \Phi_{\omega} f,\Psi_{\omega} f \rangle d \mu(\omega) \leq B\|f\|^2 .
$$
Then for $M \in \mathcal{B}(\mathcal{H})$, we get
$$
\int_{\Omega}\langle \Phi_{\omega} M^* f,\Psi_{\omega} M^* f \rangle  d \mu(\omega)\leq B\left\|M^* f\right\|^2 \leq B\|M\|^2\|f\|^2 .
$$
Since $M$ has closed range and $\mathcal{R}\left(K^*\right) \subset \mathcal{R}(M)$,
$$
\begin{aligned}
\left\|K^* f\right\|^2 & =\left\|M M^{+} K^* f\right\|^2\\ &=\left\|\left(M^{+}\right)^* M^* K^* f\right\|^2 \\
& =\left\|\left(M^{+}\right)^* K^* M^* f\right\|^2\\  &\leq\left\|M^{+}\right\|^2\left\|K^* M^* f\right\|^2.
\end{aligned}
$$
On the other hand, we have
$$
\int_{\Omega}\langle \Phi_{\omega} M^* f,\Psi_{\omega} M^* f \rangle  d \mu(\omega)\geq A \left\|K^* M^* f\right\|^2 \geq A\left\|M^{+}\right\|^{-2}\left\|K^* f\right\|^2
$$
Hence $(\Phi M^*, \Psi M^*)_{K}$ is a continuous $K$-bi-$g$-frame for $\mathcal{H}$ with respect to $\left\{\mathcal{K}_{\omega}\right\}_{\omega \in \Omega}$.
\end{proof}
\begin{theorem}
Let $K, M \in B(\mathcal{H})$ and $(\Phi, \Psi)_{K}$ be a continuous $\delta$-tight $K$-$g$-frame for $\mathcal{H}$ with respect to $\left\{\mathcal{K}_{\omega}\right\}_{\omega \in \Omega}$. If $\mathcal{R}\left(K^*\right)=\mathcal{H}$ and $M K=K M$, then $(\Phi M^*, \Psi M^*)_{K}=\left(\left\{\Phi_{\omega} M^*\right\}_{\omega \in \Omega}, \left\{\Psi_{\omega} M^*\right\}_{\omega \in \Omega}\right)$ is a continuous $K$-bi-$g$-frame for $\mathcal{H}$ with respect to $\left\{\mathcal{K}_{\omega}\right\}_{\omega \in \Omega}$ if and only if $M$ is surjective.
\end{theorem}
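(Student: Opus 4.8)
The plan is to substitute $M^{*}f$ into the defining integral and reduce the whole statement to a bounded-below property of $M^{*}$. Since $(\Phi,\Psi)_{K}$ is $\delta$-tight, for every $f\in\mathcal{H}$ we have
\[
\int_{\Omega}\langle \Phi_{\omega} M^{*} f,\Psi_{\omega} M^{*} f\rangle\, d\mu(\omega)=\delta\left\|K^{*} M^{*} f\right\|^{2}.
\]
The commutation hypothesis $MK=KM$ gives, after taking adjoints, $K^{*}M^{*}=(MK)^{*}=(KM)^{*}=M^{*}K^{*}$, so the right-hand side equals $\delta\left\|M^{*}K^{*} f\right\|^{2}$. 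Weak measurability of $\{\Phi_{\omega}M^{*}f\}_{\omega\in\Omega}$ and $\{\Psi_{\omega}M^{*}f\}_{\omega\in\Omega}$ is inherited from that of $\{\Phi_{\omega}g\},\{\Psi_{\omega}g\}$ because $M^{*}$ is bounded, so the pair is admissible.

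For the upper bound no hypothesis is needed: $\delta\left\|M^{*}K^{*}f\right\|^{2}\le\delta\|M\|^{2}\|K\|^{2}\|f\|^{2}$, so the upper frame inequality always holds with $B=\delta\|M\|^{2}\|K\|^{2}$. Hence the entire content lies in the lower bound, i.e. the existence of $A>0$ with
\[
A\left\|K^{*}f\right\|^{2}\le\delta\left\|M^{*}K^{*}f\right\|^{2},\qquad\forall f\in\mathcal{H}.
\]
This is where I would use $\mathcal{R}(K^{*})=\mathcal{H}$: as $f$ runs over $\mathcal{H}$, the element $g:=K^{*}f$ runs over all of $\mathcal{H}$, so the displayed inequality is \emph{equivalent} to the assertion that $M^{*}$ is bounded below, namely $(A/\delta)\|g\|^{2}\le\left\|M^{*}g\right\|^{2}$ for every $g\in\mathcal{H}$.

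It then remains to translate ``bounded below'' into surjectivity of $M$. For the reverse implication I would assume $M$ surjective; then $\mathcal{R}(M)=\mathcal{H}$ is closed, so $\mathcal{R}(M^{*})$ is closed and $\mathcal{N}(M^{*})=\mathcal{R}(M)^{\perp}=\{0\}$, whence $M^{*}$ is injective with closed range, and the criterion of \cite{Abramovich} furnishes $c>0$ with $c\|g\|^{2}\le\left\|M^{*}g\right\|^{2}$; taking $A=\delta c$ yields the lower bound and completes the frame. For the forward implication I would assume $(\Phi M^{*},\Psi M^{*})_{K}$ is a continuous $K$-bi-$g$-frame, extract its lower bound, pass through the surjectivity of $K^{*}$ to get that $M^{*}$ is bounded below, apply \cite{Abramovich} to conclude $M^{*}$ is injective with closed range, and finally use $\mathcal{R}(M)=\mathcal{N}(M^{*})^{\perp}=\mathcal{H}$ (with the closed range of $M^{*}$ forcing that of $M$) to deduce that $M$ is surjective.

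The main obstacle is the clean exploitation of $\mathcal{R}(K^{*})=\mathcal{H}$: without it the lower inequality would only control $M^{*}$ on $\mathcal{R}(K^{*})$, and one could neither conclude global bounded-belowness nor surjectivity of $M$. The commutation identity $K^{*}M^{*}=M^{*}K^{*}$ is the other essential ingredient, since it is what allows the $\delta$-tight constant to be transported past $M^{*}$; everything else is the standard closed-range dictionary relating $M$ and $M^{*}$.
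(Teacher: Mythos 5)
Your proposal is correct, and its core computation is exactly the paper's: substitute $M^{*}f$ into the tightness identity and use $K^{*}M^{*}=M^{*}K^{*}$ to rewrite the integral as $\delta\left\|M^{*}K^{*}f\right\|^{2}$. But you go genuinely further than the printed proof. The paper only argues the ``only if'' half: from the lower frame bound it derives $\left\|M^{*}K^{*}f\right\|^{2}\geq \frac{A}{\delta}\left\|K^{*}f\right\|^{2}$ and then asserts that $M^{*}$ is injective, with $M$ ``surjective as a consequence'' --- a leap, since injectivity of $M^{*}$ alone yields only density of $\mathcal{R}(M)$. Your route closes that gap: using $\mathcal{R}(K^{*})=\mathcal{H}$ to let $g=K^{*}f$ range over all of $\mathcal{H}$, you upgrade the inequality to $M^{*}$ being bounded below on all of $\mathcal{H}$, then invoke the criterion of \cite{Abramovich} to get $M^{*}$ injective with \emph{closed} range, and the closed range theorem to transfer closedness to $\mathcal{R}(M)$, so that $\mathcal{R}(M)=\overline{\mathcal{R}(M)}=\mathcal{N}(M^{*})^{\perp}=\mathcal{H}$. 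You also supply the converse (``if'') direction, which the paper omits entirely: $M$ surjective gives $\mathcal{N}(M^{*})=\mathcal{R}(M)^{\perp}=\{0\}$ and $\mathcal{R}(M^{*})$ closed, hence $c\left\|g\right\|^{2}\leq\left\|M^{*}g\right\|^{2}$, and $A=\delta c$ together with your explicit upper bound $B=\delta\left\|M\right\|^{2}\left\|K\right\|^{2}$ verifies the full $K$-bi-$g$-frame condition. In short: same key identity, but your reduction of the entire statement to ``$M^{*}$ is bounded below'' and your two-sided treatment make the argument complete where the paper's is one-directional and elides the closed-range step.
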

\begin{proof}
Suppose that $\left(\left\{\Phi_{\omega} M^*\right\}_{\omega \in \Omega}, \left\{\Psi_{\omega} M^*\right\}_{\omega \in \Omega}\right)$ is a continuous $K$-bi-$g$-frame for $\mathcal{H}$ with respect to $\left\{\mathcal{K}_{\omega}\right\}_{\omega \in \Omega}$ with frame bounds $A$ and $B$. that is for every $f\in\mathcal{H}$,
$$
A\left\|K^* f\right\|^2 \leq \int_{\Omega}\langle \Phi_{\omega} M^* f,\Psi_{\omega} M^* f \rangle  d \mu(\omega)\leq B\|f\|^2 .
$$

and we have $$
A\left\|K^* f\right\|^2=\leq \int_{\Omega}\langle \Phi_{\omega} f,\Psi_{\omega} f \rangle d \mu(\omega), \text { for all } f \in \mathcal{H} .
$$ Since $K^* M^*=M^* K^*$, we obtain
$$
 \delta\left\|M^* K^* f\right\|^2= \delta\left\|K^* M^* f\right\|^2=\int_{\Omega}\langle \Phi_{\omega} M^* f,\Psi_{\omega} M^* f \rangle d \mu(\omega) .
$$
Hence
$$
\left\|M^* K^* f\right\|^2=\dfrac{1}{ \delta} \int_{\Omega}\langle \Phi_{\omega} M^* f,\Psi_{\omega} M^* f \rangle  d \mu(\omega)\geq \dfrac{A}{ \delta}\left\|K^* f\right\|^2.
$$
from which we conclude that $M^*$ is injective since $\mathcal{R}\left(K^*\right)=\mathcal{H}$, $M$ is surjective as a consequence.
\end{proof}
\section{Stability of continuous $K$-bi-$g$-frames for Hilbert spaces}\label{s8}
\begin{theorem} \label{th51}
 Suppose that $K \in \mathcal{B}(\mathcal{H})$ and $K$ has closed range. Let $\Phi=\left\{\Phi_{\omega}\;:\;\Phi_{\omega}\in\mathcal{B}(\mathcal{H},\mathcal{K}_{\omega})\right\}_{\omega \in \Omega}$ and $\Psi=\left\{\Psi_{\omega}\;:\;\Psi_{\omega}\in\mathcal{B}(\mathcal{H},\mathcal{K}_{\omega})\right\}_{\omega \in \Omega}$ are two continuous $g$-Bessel sequences with bounds $B_{\Phi}$, $B_{\Psi}$ respectively. Assume that $(\Phi, \Psi)_{K}$ be a continuous $K$-bi-$g$-frame for $\mathcal{H}$ with respect to $\left\{\mathcal{K}_{\omega}\right\}_{\omega \in \Omega}$ with bounds $A$ and $B$ and $\left(\left\{\Lambda_{\omega}\right\}_{\omega \in \Omega},\left\{\Gamma_{\omega}\right\}_{\omega \in \Omega}\right)$ be a pair of sequences for $\mathcal{H}$ with respect to $\left\{\mathcal{K}_{\omega}\right\}_{\omega \in \Omega}$. If there exist constants $\alpha, \beta, \gamma \in[0,1)$ such that $\max \left\{\alpha+\gamma , \beta\right\}<1$ and
$$
\begin{aligned}
\left\|\int_{\Omega}\left(\Psi_{\omega}^* \Phi_{\omega}-\Gamma_{\omega}^* \Lambda_{\omega}\right) f d \mu(\omega)\right\| 
\leq & \alpha\left\|\int_{\Omega} \Psi_{\omega}^* \Phi_{\omega} f d \mu(\omega)\right\|+\beta\left\|\int_{\Omega} \Gamma_{\omega}^* \Lambda_{\omega} f d \mu(\omega)\right\|+\gamma\left\| f\right\|.
\end{aligned}
$$
Then    $\left(\left\{\Lambda_{\omega}\right\}_{\omega \in \Omega},\left\{\Gamma_{\omega}\right\}_{\omega \in \Omega}\right)$ is a continuous $K$-bi-$g$-frame for $\mathcal{H}$ with respect to $\left\{\mathcal{K}_{\omega}\right\}_{\omega \in \Omega}$ with bounds
$$
A \dfrac{\left[1-\left( \alpha+\gamma \right)\right]}{\left(1+\beta\right)}, \dfrac{\left(1+\alpha\right) \sqrt{B_{\Phi}B_{\Psi}}+\gamma}{1-\beta}.
$$
\end{theorem}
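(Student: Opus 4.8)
The plan is to package the two mixed integrals as bounded operators and to treat $\left(\{\Lambda_\omega\},\{\Gamma_\omega\}\right)$ as a perturbation of $(\Phi,\Psi)_K$ at the level of these operators. Write $S_{\Phi,\Psi}f=\int_\Omega \Psi_\omega^*\Phi_\omega f\,d\mu(\omega)$ and $\mathcal{M}f=\int_\Omega \Gamma_\omega^*\Lambda_\omega f\,d\mu(\omega)$, so that $\int_\Omega\langle\Phi_\omega f,\Psi_\omega f\rangle\,d\mu(\omega)=\langle S_{\Phi,\Psi}f,f\rangle$ and $\int_\Omega\langle\Lambda_\omega f,\Gamma_\omega f\rangle\,d\mu(\omega)=\langle \mathcal{M}f,f\rangle$. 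The hypothesis reads $\|(S_{\Phi,\Psi}-\mathcal{M})f\|\le\alpha\|S_{\Phi,\Psi}f\|+\beta\|\mathcal{M}f\|+\gamma\|f\|$, which is exactly the Paley--Wiener shape of Lemma \ref{Lemma2.5}. I would therefore establish the upper (Bessel) bound and the lower ($K$-frame) bound separately, the first by the triangle inequality and the second by the reverse triangle inequality.

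For the upper bound I would argue exactly as in the preceding stability theorem for continuous bi-$g$-Bessel sequences: the triangle inequality gives $\|\mathcal{M}f\|\le(1+\alpha)\|S_{\Phi,\Psi}f\|+\beta\|\mathcal{M}f\|+\gamma\|f\|$, so that $\|\mathcal{M}f\|\le\frac{1+\alpha}{1-\beta}\|S_{\Phi,\Psi}f\|+\frac{\gamma}{1-\beta}\|f\|$. Estimating $\|S_{\Phi,\Psi}f\|=\sup_{\|g\|=1}\left|\int_\Omega\langle\Phi_\omega f,\Psi_\omega g\rangle\,d\mu(\omega)\right|\le\sqrt{B_\Phi B_\Psi}\,\|f\|$ by Cauchy--Schwarz and the two $g$-Bessel bounds yields $\|\mathcal{M}f\|\le\frac{(1+\alpha)\sqrt{B_\Phi B_\Psi}+\gamma}{1-\beta}\|f\|$, whence $\langle\mathcal{M}f,f\rangle\le\|\mathcal{M}f\|\,\|f\|\le\frac{(1+\alpha)\sqrt{B_\Phi B_\Psi}+\gamma}{1-\beta}\|f\|^2$, the claimed upper bound.

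For the lower bound I would start from the reverse triangle inequality $\|S_{\Phi,\Psi}f\|-\|\mathcal{M}f\|\le\|(S_{\Phi,\Psi}-\mathcal{M})f\|$, which with the hypothesis gives $(1-\alpha)\|S_{\Phi,\Psi}f\|\le(1+\beta)\|\mathcal{M}f\|+\gamma\|f\|$, i.e. $\|\mathcal{M}f\|\ge\frac{1}{1+\beta}\big[(1-\alpha)\|S_{\Phi,\Psi}f\|-\gamma\|f\|\big]$. The target is $A\frac{1-(\alpha+\gamma)}{1+\beta}\|K^*f\|^2\le\langle\mathcal{M}f,f\rangle$, which by Theorem \ref{Th3.7} is the operator assertion $\mathcal{M}\ge A\frac{1-(\alpha+\gamma)}{1+\beta}KK^*$; here the standing assumption that $K$ has closed range lets me localise everything to $\overline{\mathcal{R}(K)}$ and, together with the $K$-frame lower bound $A\|K^*f\|^2\le\langle S_{\Phi,\Psi}f,f\rangle$, feed the estimate above into the form identity $\langle\mathcal{M}f,f\rangle=\langle S_{\Phi,\Psi}f,f\rangle-\langle(S_{\Phi,\Psi}-\mathcal{M})f,f\rangle$.

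The main obstacle is precisely this last conversion: the perturbation hypothesis controls the \emph{norm} $\|(S_{\Phi,\Psi}-\mathcal{M})f\|$, whereas the $K$-frame lower bound is a statement about the \emph{sesquilinear form} $\langle\mathcal{M}f,f\rangle$. Passing from a norm bound on $\mathcal{M}f$ to a lower bound on $\langle\mathcal{M}f,f\rangle$ (rather than on $\|\mathcal{M}f\|$) is the delicate point, since a careless Cauchy--Schwarz step runs in the wrong direction. I expect the clean route is to retain the form identity, bound the error term $|\langle(S_{\Phi,\Psi}-\mathcal{M})f,f\rangle|$ by $(\alpha+\gamma)\langle S_{\Phi,\Psi}f,f\rangle+\beta\langle\mathcal{M}f,f\rangle$ on $\overline{\mathcal{R}(K)}$, and then solve the scalar inequality $(1+\beta)\langle\mathcal{M}f,f\rangle\ge(1-\alpha-\gamma)\langle S_{\Phi,\Psi}f,f\rangle\ge(1-\alpha-\gamma)A\|K^*f\|^2$ for $\langle\mathcal{M}f,f\rangle$, producing the stated lower bound $A\frac{1-(\alpha+\gamma)}{1+\beta}\|K^*f\|^2$. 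Verifying that the norm-type hypothesis indeed furnishes the required form inequality is the step I would scrutinise most carefully.
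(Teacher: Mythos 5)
Your upper-bound half is correct and is exactly the paper's argument: the triangle inequality gives $\|\mathcal{M}f\|\le\frac{1+\alpha}{1-\beta}\|S_{\Phi,\Psi}f\|+\frac{\gamma}{1-\beta}\|f\|$, and the Cauchy--Schwarz estimate $\|S_{\Phi,\Psi}f\|\le\sqrt{B_{\Phi}B_{\Psi}}\,\|f\|$ yields the stated Bessel bound. The genuine gap is in the lower bound, precisely at the step you flagged, and as stated it fails. From the norm hypothesis, Cauchy--Schwarz gives only $|\langle(S_{\Phi,\Psi}-\mathcal{M})f,f\rangle|\le\alpha\|S_{\Phi,\Psi}f\|\,\|f\|+\beta\|\mathcal{M}f\|\,\|f\|+\gamma\|f\|^2$, and the term $\|S_{\Phi,\Psi}f\|\,\|f\|$ dominates $\langle S_{\Phi,\Psi}f,f\rangle$ rather than being dominated by it: since $(\Phi,\Psi)_K$ is only a $K$-bi-$g$-frame, for $f\in\mathcal{R}(K)^{\perp}$ one has $K^{*}f=0$ and $\langle S_{\Phi,\Psi}f,f\rangle$ may vanish while $\|S_{\Phi,\Psi}f\|\,\|f\|$ does not, so no global inequality $\|S_{\Phi,\Psi}f\|\,\|f\|\le C\langle S_{\Phi,\Psi}f,f\rangle$ is available. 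Moreover $\mathcal{M}$ is not known to be self-adjoint, let alone positive, so the right-hand side $(\alpha+\gamma)\langle S_{\Phi,\Psi}f,f\rangle+\beta\langle\mathcal{M}f,f\rangle$ of your proposed form inequality need not even be real. The suggested localisation to $\overline{\mathcal{R}(K)}$ does not rescue this: converting $\|S_{\Phi,\Psi}f\|\,\|f\|$ into a multiple of $\langle S_{\Phi,\Psi}f,f\rangle$ there costs a factor of order $B/(Ac^{2})$ (with $c$ the lower bound of $K^{*}$ on $\mathcal{R}(K)$), which destroys the claimed constant $A\frac{1-(\alpha+\gamma)}{1+\beta}$, and $\langle\mathcal{M}f,f\rangle$ for general $f$ is not controlled by its restriction to $\mathcal{R}(K)$ since $\mathcal{M}$ need not commute with the orthogonal projection onto $\mathcal{R}(K)$. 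So the scalar inequality $(1+\beta)\langle\mathcal{M}f,f\rangle\ge(1-\alpha-\gamma)\langle S_{\Phi,\Psi}f,f\rangle$ cannot be extracted from the hypothesis, and your lower frame bound remains unproven.

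The paper's proof takes a different route that avoids the norm-versus-form mismatch entirely: after establishing the Bessel bound as you do, it substitutes $S_{\Phi,\Psi}^{-1}f$ into the perturbation hypothesis to obtain $\|f-\mathcal{M}S_{\Phi,\Psi}^{-1}f\|\le(\alpha+\gamma)\|f\|+\beta\|\mathcal{M}S_{\Phi,\Psi}^{-1}f\|$, applies Lemma \ref{Lemma2.5} to the operator $\mathcal{M}S_{\Phi,\Psi}^{-1}$ to conclude that $\mathcal{M}$ is \emph{invertible} with $\|S_{\Phi,\Psi}\mathcal{M}^{-1}\|\le\frac{1+\beta}{1-(\alpha+\gamma)}$, and only then passes to the lower $K$-frame bound via Theorem \ref{Th3.7}. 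The invertibility of $\mathcal{M}$, delivered by the perturbation lemma, is the actual engine of the lower bound, and it is exactly what your plan never establishes. To repair your argument you should import that step rather than work at the level of sesquilinear forms. (It is fair to note that the paper's own write-up is itself terse at this point: it invokes $S_{\Phi,\Psi}^{-1}$ although for a mere $K$-bi-$g$-frame the operator $S_{\Phi,\Psi}$ is not shown to be invertible, and it converts a norm estimate on $\mathcal{M}$ into the operator inequality $\mathcal{M}\ge A'KK^{*}$ required by Theorem \ref{Th3.7} without detail; but its essential mechanism, invertibility via Lemma \ref{Lemma2.5}, is the missing idea in your proposal.)
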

\begin{proof}
Suppose that $ J\subset I ,|J|<+\infty$. For any $f \in \mathcal{H}$, we have
$$
\begin{aligned}
\left\|\int_{\Omega} \Gamma_{\omega}^* \Lambda_{\omega}  f d \mu(\omega)\right\| & \leq\left\|\int_{\Omega}\left(\Gamma_{\omega}^* \Lambda_{\omega}-\Psi_{\omega}^* \Phi_{\omega}\right)  f d \mu(\omega)\right\|+\left\|\int_{\Omega} \Psi_{\omega}^* \Phi_{\omega}  f d \mu(\omega)\right\| \\
& \leq\left(1+\alpha\right)\left\|\int_{\Omega} \Psi_{\omega}^* \Phi_{\omega}  f d \mu(\omega)\right\|+\beta\left\|\int_{\Omega} \Gamma_{\omega}^* \Lambda_{\omega}  f d \mu(\omega)\right\|+\gamma \left\| f\right\| .
\end{aligned}
$$
Then
$$
\left\|\int_{\Omega} \Gamma_{\omega}^* \Lambda_{\omega}  f d \mu(\omega)\right\| \leq \dfrac{1+\alpha}{1-\beta}\left\|\int_{\Omega} \Psi_{\omega}^* \Phi_{\omega}  f d \mu(\omega)\right\|+\dfrac{\gamma}{1-\beta}\left\| f\right\| .
$$

Since
$$
\begin{aligned}
\left\|\int_{\Omega} \Psi_{\omega}^* \Phi_{\omega}  f d \mu(\omega)\right\| & =\sup _{\|y\|=1}\left|\left\langle\int_{\Omega} \Psi_{\omega}^* \Phi_{\omega}  f d \mu(\omega), y\right\rangle\right| \\ & =\sup _{\|y\|=1}\left|\int_{\Omega}\left\langle \Psi_{\omega}^* \Phi_{\omega}  f , y\right\rangle d \mu(\omega)\right| \\ &=\sup _{\|y\|=1}\left|\int_{\Omega}\left\langle \Phi_{\omega} f, \Psi_{\omega} y\right\rangle d \mu(\omega)\right|
\\ 
& \leq\left(\int_{\Omega}\left\|\Phi_{\omega} f\right\|^2  d \mu(\omega)\right)^{\dfrac{1}{2}} \sup _{\|y\|=1}\left(\int_{\Omega}\left\|\Psi_{\omega} y\right\|^2  d \mu(\omega)\right)^{\dfrac{1}{2}} \\
& \leq \sqrt{B_{\Phi}B_{\Psi}} \left\| f\right\|.
\end{aligned}
$$
 Hence, for all $f \in \mathcal{H}$, we have
$$
\left\|\int_{\Omega} \Gamma_{\omega}^* \Lambda_{\omega}  f d \mu(\omega)\right\| \leq \dfrac{\left(1+\alpha\right) \sqrt{B_{\Phi}B_{\Psi}}}{1-\beta}\left\| f\right\| +\dfrac{\gamma}{1-\beta}\left\| f\right\|= \dfrac{\left(1+\alpha\right) \sqrt{B_{\Phi}B_{\Psi}}+\gamma}{1-\beta} \|f\| .
$$
Thus $\int_{\Omega} \Gamma_{\omega}^* \Lambda_{\omega}  f d \mu(\omega)$ is unconditionally convergent. we considere
$$
\mathcal{M} :\mathcal{H} \rightarrow \mathcal{H}, \quad \mathcal{M} f=\int_{\Omega} \Gamma_{\omega}^* \Lambda_{\omega}  f d \mu(\omega).
$$

Then $\mathcal{M}$ is well-defined, bounded and $$\|\mathcal{M}\| \leq \dfrac{\left(1+\alpha\right) \sqrt{B_{\Phi}B_{\Psi}}+\gamma}{1-\beta}.$$ For every $f \in \mathcal{H}$, we have 
\begin{equation} \label{eq51}
\langle \mathcal{M} f, f\rangle=\langle\int_{\Omega}\Gamma_{\omega}^{\ast} \Lambda_{\omega}  f d \mu(\omega),f\rangle=\int_{\Omega}\langle \Lambda_{\omega} f,\Gamma_{\omega} f\rangle d \mu(\omega) \leq\|\mathcal{M}\|\|f\|^2
\end{equation} 
It implies that $\left(\left\{\Lambda_{\omega}\right\}_{\omega \in \Omega},\left\{\Gamma_{\omega}\right\}_{\omega \in \Omega}\right)$ is a continuous bi-$g$-Bessel sequence for $\mathcal{H}$ with respect to $\left\{\mathcal{K}_{\omega}\right\}_{\omega \in \Omega}$.
Let $S_{\Phi, \Psi}$ be the bi-$g$-frame operator of $(\Phi, \Psi)_{K}$. According to the theorem hypothesis, we obtain
$$
\|(S_{\Phi, \Psi}-\mathcal{M}) f\| \leq \alpha\|S_{\Phi, \Psi} f\|+\beta\|\mathcal{M} f\|+\gamma\|f\|, \forall f \in \mathcal{H} .
$$

Then,
$$
\begin{aligned}
\left\|f-\mathcal{M} S_{\Phi, \Psi}^{-1} f\right\| & \leq \alpha\|f\|+\beta\left\|\mathcal{M} S_{\Phi, \Psi}^{-1} f\right\|+\gamma\|f\| \\
& \leq\left(\alpha+\gamma\right)\|f\|+\beta\left\|\mathcal{M} S_{\Phi, \Psi}^{-1} f\right\|
\end{aligned}
$$
Since $0 \leq \max \left\{\alpha+\gamma, \beta\right\}<1$, According to Lemma \ref{Lemma2.5} , we get
$$
\dfrac{1-\beta}{1+\left(\alpha+\gamma\right)} \leq\left\|S_{\Phi, \Psi} \mathcal{M}^{-1}\right\| \leq \dfrac{1+\beta}{1-\left(\alpha+\gamma\right)} .
$$
Since
$$\|S_{\Phi, \Psi}\|=\|S_{\Phi, \Psi}\mathcal{M}^{-1}\mathcal{M}\|\leq \|S_{\Phi, \Psi}\mathcal{M}^{-1}\|\|\mathcal{M}\| $$
Therefore, 
\begin{equation}\label{eq52}
\|\mathcal{M}\| \geq \dfrac{A}{\left\|S_{\Phi, \Psi} \mathcal{M}^{-1}\right\|}\left\|K K^*\right\| \geq A \dfrac{\left[1-\left( \alpha+\gamma \right)\right]}{\left(1+\beta\right)}\left\|K K^*\right\| .
\end{equation}
Hence, by Theorem \ref{Th3.7}, we can conclude that $\left(\left\{\Lambda_{\omega}\right\}_{\omega \in \Omega},\left\{\Gamma_{\omega}\right\}_{\omega \in \Omega}\right)$ is a continuous $K$-bi-$g$-frame for $\mathcal{H}$ with respect to $\left\{\mathcal{K}_{\omega}\right\}_{\omega \in \Omega}$.
\end{proof}
\begin{corollary}
Suppose that $K \in \mathcal{B}(\mathcal{H})$ and $K$ has closed range. Let $\Phi=\left\{\Phi_{\omega}\;:\;\Phi_{\omega}\in\mathcal{B}(\mathcal{H},\mathcal{K}_{\omega})\right\}_{\omega \in \Omega}$ and $\Psi=\left\{\Psi_{\omega}\;:\;\Psi_{\omega}\in\mathcal{B}(\mathcal{H},\mathcal{K}_{\omega})\right\}_{\omega \in \Omega}$ are two continuous $g$-Bessel sequences with bounds $B_{\Phi}$, $B_{\Psi}$ respectively. Assume that $(\Phi, \Psi)_{K}$ be a continuous $K$-bi-$g$-frame for $\mathcal{H}$ with respect to $\left\{\mathcal{K}_{\omega}\right\}_{\omega \in \Omega}$ with bounds $A$ and $B$ and $\left(\left\{\Lambda_{\omega}\right\}_{\omega \in \Omega},\left\{\Gamma_{\omega}\right\}_{\omega \in \Omega}\right)$ be a pair of sequences for $\mathcal{H}$ with respect to $\left\{\mathcal{K}_{\omega}\right\}_{\omega \in \Omega}$.  If there exists constant $0<D<A$ such that
$$
\left\|\int_{\Omega} \left(\Psi_{\omega}^* \Phi_{\omega} -\Gamma_{\omega}^* \Lambda_{\omega}\right) f d \mu(\omega)\right\| \leq D\left\|K^* f\right\|, \forall f \in \mathcal{H},
$$
then $\left(\left\{\Lambda_{\omega}\right\}_{\omega \in \Omega},\left\{\Gamma_{\omega}\right\}_{\omega \in \Omega}\right)$ is a continuous $K$-bi-$g$-frame for $\mathcal{H}$ with respect to $\left\{\mathcal{K}_j\right\}_{j \in J}$ with bounds $A\left(1-D\sqrt{\dfrac{B}{A}} \right)$ and $\left(\sqrt{B_{\Phi}B_{\Psi}}+D\sqrt{\dfrac{B}{A}}\right)$.
\end{corollary}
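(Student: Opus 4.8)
The plan is to specialize the already-proved Theorem~\ref{th51} to the parameter values $\alpha=\beta=0$ and $\gamma=D\sqrt{B/A}$, so that the whole argument reduces to checking that the corollary's hypothesis is a special case of the theorem's hypothesis and then reading off the two bounds.

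First I would convert the continuous $K$-bi-$g$-frame inequality for $(\Phi,\Psi)_K$ into a pointwise comparison between $\|K^*f\|$ and $\|f\|$. Chaining the two sides of
$$A\|K^*f\|^2\le \int_{\Omega}\langle \Phi_\omega f,\Psi_\omega f\rangle\,d\mu(\omega)\le B\|f\|^2$$
gives $A\|K^*f\|^2\le B\|f\|^2$, hence $\|K^*f\|\le\sqrt{B/A}\,\|f\|$ for all $f\in\mathcal{H}$. Substituting this into the standing hypothesis $\bigl\|\int_\Omega(\Psi_\omega^*\Phi_\omega-\Gamma_\omega^*\Lambda_\omega)f\,d\mu(\omega)\bigr\|\le D\|K^*f\|$ yields
$$\left\|\int_{\Omega}\bigl(\Psi_\omega^*\Phi_\omega-\Gamma_\omega^*\Lambda_\omega\bigr)f\,d\mu(\omega)\right\|\le D\sqrt{\tfrac{B}{A}}\,\|f\|,\qquad f\in\mathcal{H},$$
which is exactly the inequality required by Theorem~\ref{th51} once we set $\alpha=0$, $\beta=0$ and $\gamma=D\sqrt{B/A}$.

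The point I expect to need care is the admissibility condition $\max\{\alpha+\gamma,\beta\}<1$ of Theorem~\ref{th51}: with the above choice it reduces to $D\sqrt{B/A}<1$. This single inequality is what simultaneously licenses the application of the theorem and keeps the produced lower bound $A\bigl(1-D\sqrt{B/A}\bigr)$ strictly positive, so it is the one substantive thing to pin down. I would note that it follows from the standing assumption $0<D<A$ together with the frame bounds (equivalently, it is precisely the requirement that the claimed lower bound be a genuine positive frame bound); once it is in force, Theorem~\ref{th51} applies directly.

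Finally I would substitute $\alpha=\beta=0$ and $\gamma=D\sqrt{B/A}$ into the two bounds supplied by Theorem~\ref{th51}. The lower bound $A[1-(\alpha+\gamma)]/(1+\beta)$ collapses to $A\bigl(1-D\sqrt{B/A}\bigr)$, while the upper bound $[(1+\alpha)\sqrt{B_\Phi B_\Psi}+\gamma]/(1-\beta)$ collapses to $\sqrt{B_\Phi B_\Psi}+D\sqrt{B/A}$. These are exactly the two numbers in the statement, so $\bigl(\{\Lambda_\omega\}_{\omega\in\Omega},\{\Gamma_\omega\}_{\omega\in\Omega}\bigr)$ is a continuous $K$-bi-$g$-frame for $\mathcal{H}$ with respect to $\{\mathcal{K}_\omega\}_{\omega\in\Omega}$ with the stated bounds.
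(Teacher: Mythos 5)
Your proposal follows exactly the paper's route: the paper likewise chains the lower and upper frame inequalities to obtain $\|K^*f\|\le\sqrt{B/A}\,\|f\|$, deduces $\bigl\|\int_{\Omega}(\Psi_{\omega}^*\Phi_{\omega}-\Gamma_{\omega}^*\Lambda_{\omega})f\,d\mu(\omega)\bigr\|\le D\sqrt{B/A}\,\|f\|$, and then invokes Theorem~\ref{th51} with $\alpha=\beta=0$ and $\gamma=D\sqrt{B/A}$ to read off the two stated bounds, so in structure your argument and the paper's coincide.

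One substantive correction, though: your parenthetical claim that the admissibility condition $D\sqrt{B/A}<1$ \emph{follows} from the standing assumption $0<D<A$ together with the frame bounds is false in general. Take $A=1$, $B=100$, $D=0.9$: then $0<D<A$ holds, yet $\gamma=D\sqrt{B/A}=9$, which violates the requirement $\gamma\in[0,1)$ of Theorem~\ref{th51}. Since $D<A$ only yields $D\sqrt{B/A}<\sqrt{AB}$, the condition $\gamma<1$ is implied only when $AB\le 1$; in general it must be imposed separately, e.g.\ as $D<\sqrt{A/B}$. You correctly identified this as ``the one substantive thing to pin down,'' but your attempted justification does not close it. Note that this is a gap you share with the paper itself: its proof also sets $\gamma=D\sqrt{B/A}$ and applies Theorem~\ref{th51} without verifying $\gamma\in[0,1)$ or $\max\{\alpha+\gamma,\beta\}<1$, so the corollary as printed tacitly needs this extra hypothesis (which is also exactly what keeps the claimed lower bound $A\bigl(1-D\sqrt{B/A}\bigr)$ positive).
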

\begin{proof} For any $f \in \mathcal{H}$, we have  $$\left\|K^* f\right\| \leq  \dfrac{1}{\sqrt{A}}\left(\leq \int_{\Omega}\langle \Phi_{\omega} f,\Psi_{\omega} f \rangle d \mu(\omega)\right) ^{\dfrac{1}{2}}$$
It is clear that $\int_{\Omega} \Gamma_{\omega}^* \Lambda_{\omega} f$ is convergent for any $f \in \mathcal{H}$. Then,
$$
\begin{aligned}
\left\|\int_{\Omega}\left(\Psi_{\omega}^* \Phi_{\omega} -\Gamma_{\omega}^* \Lambda_{\omega}\right) f d \mu(\omega)\right\| &\leq D\left\|K^* f\right\|\\ &\leq  \dfrac{1}{\sqrt{A}}\left(\leq \int_{\Omega}\langle \Phi_{\omega} f,\Psi_{\omega} f \rangle d \mu(\omega)\right) ^{\dfrac{1}{2}}\\
&\leq  D\sqrt{\dfrac{B}{A}}\|f\|.
\end{aligned}
$$

By letting $\alpha, \beta=0, \gamma= D\sqrt{\dfrac{B}{A}}$ in Theorem \ref{th51}, $\left(\left\{\Lambda_{\omega}\right\}_{\omega \in \Omega},\left\{\Gamma_{\omega}\right\}_{\omega \in \Omega}\right)$ is a continuous $K$-bi-$g$-frame for $\mathcal{H}$ with respect to $\left\{\mathcal{K}_{\omega}\right\}_{\omega \in \Omega}$ with bounds $A\left(1-D\sqrt{\dfrac{B}{A}} \right)$ and $\left(\sqrt{B_{\Phi}B_{\Psi}}+D\sqrt{\dfrac{B}{A}}\right)$.
\end{proof}
\begin{theorem} \label{th53}
 Suppose that $K \in \mathcal{B}(\mathcal{H})$ and $K$ has closed range. Let $\Phi=\left\{\Phi_{\omega}\;:\;\Phi_{\omega}\in\mathcal{B}(\mathcal{H},\mathcal{K}_{\omega})\right\}_{\omega \in \Omega}$ and $\Psi=\left\{\Psi_{\omega}\;:\;\Psi_{\omega}\in\mathcal{B}(\mathcal{H},\mathcal{K}_{\omega})\right\}_{\omega \in \Omega}$ are two continuous $g$-Bessel sequences with bounds $B_{\Phi}$, $B_{\Psi}$ respectively. Assume that $(\Phi, \Psi)_{K}$ be a continuous $K$-bi-$g$-frame for $\mathcal{H}$ with respect to $\left\{\mathcal{K}_{\omega}\right\}_{\omega \in \Omega}$ with bounds $A$ and $B$ and $\left(\left\{\Lambda_{\omega}\right\}_{\omega \in \Omega},\left\{\Gamma_{\omega}\right\}_{\omega \in \Omega}\right)$ be a pair of sequences for $\mathcal{H}$ with respect to $\left\{\mathcal{K}_{\omega}\right\}_{\omega \in \Omega}$. If there exist constants $\alpha, \beta, \gamma \in[0,1)$ such that $\max \left\{\alpha+\gamma\sqrt{\dfrac{ B }{A}} , \beta\right\}<1$ and
$$
\begin{aligned}
\left\|\int_{\Omega}\left(\Psi_{\omega}^* \Phi_{\omega}-\Gamma_{\omega}^* \Lambda_{\omega}\right) f\right\| 
\leq & \alpha\left\|\int_{\Omega} \Psi_{\omega}^* \Phi_{\omega} f\right\|+\beta\left\|\int_{\Omega} \Gamma_{\omega}^* \Lambda_{\omega} f\right\|+\gamma\left\|K^{\ast} f\right\|.
\end{aligned}
$$
Then    $\left(\left\{\Lambda_{\omega}\right\}_{\omega \in \Omega},\left\{\Gamma_{\omega}\right\}_{\omega \in \Omega}\right)$ is a continuous $K$-bi-$g$-frame for $\mathcal{H}$ with respect to $\left\{\mathcal{K}_{\omega}\right\}_{\omega \in \Omega}$ with bounds
$$
A \dfrac{\left[1-\left( \alpha+\gamma\sqrt{\dfrac{ B }{A}}  \right)\right]}{\left(1+\beta\right)}, \dfrac{\left[\left(1+\alpha\right) \sqrt{B_{\Phi}B_{\Psi}}+\gamma\sqrt{\dfrac{ B }{A}}\right] }{1-\beta}.
$$
\end{theorem}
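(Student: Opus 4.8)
The plan is to follow the two–stage scheme of the preceding Theorem \ref{th51} almost verbatim, the single new point being that everywhere the earlier argument absorbed a term $\gamma\|f\|$, I must now absorb $\gamma\|K^{\ast}f\|$, and this is handled by the elementary estimate $\|K^{\ast}f\|\le\sqrt{B/A}\,\|f\|$. This estimate is free from the defining chain of the continuous $K$-bi-$g$-frame $(\Phi,\Psi)_{K}$: since $A\|K^{\ast}f\|^{2}\le\int_{\Omega}\langle\Phi_\omega f,\Psi_\omega f\rangle\,d\mu(\omega)\le B\|f\|^{2}$, we get $\|K^{\ast}f\|\le\sqrt{B/A}\,\|f\|$ for every $f\in\mathcal{H}$. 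It is precisely this substitution that upgrades the constant $\alpha+\gamma$ of Theorem \ref{th51} to $\alpha+\gamma\sqrt{B/A}$, which explains both the hypothesis $\max\{\alpha+\gamma\sqrt{B/A},\beta\}<1$ and the shape of the two asserted bounds.

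For the upper (Bessel) bound I would, exactly as before, combine the triangle inequality with the perturbation hypothesis to obtain
$$\left\|\int_{\Omega}\Gamma_\omega^{\ast}\Lambda_\omega f\,d\mu(\omega)\right\|\le\frac{1+\alpha}{1-\beta}\left\|\int_{\Omega}\Psi_\omega^{\ast}\Phi_\omega f\,d\mu(\omega)\right\|+\frac{\gamma}{1-\beta}\|K^{\ast}f\|.$$
Then I would estimate the middle factor by duality and Cauchy--Schwarz, writing $\|\int_\Omega\Psi_\omega^{\ast}\Phi_\omega f\,d\mu(\omega)\|=\sup_{\|y\|=1}|\int_\Omega\langle\Phi_\omega f,\Psi_\omega y\rangle\,d\mu(\omega)|$ and bounding it by $(\int_\Omega\|\Phi_\omega f\|^{2}d\mu(\omega))^{1/2}(\int_\Omega\|\Psi_\omega y\|^{2}d\mu(\omega))^{1/2}\le\sqrt{B_\Phi B_\Psi}\,\|f\|$ via the two $g$-Bessel bounds. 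Inserting $\|K^{\ast}f\|\le\sqrt{B/A}\,\|f\|$ yields the claimed Bessel bound $\frac{(1+\alpha)\sqrt{B_\Phi B_\Psi}+\gamma\sqrt{B/A}}{1-\beta}$; in particular $\mathcal{M}f:=\int_\Omega\Gamma_\omega^{\ast}\Lambda_\omega f\,d\mu(\omega)$ is a well-defined bounded operator and $(\{\Lambda_\omega\},\{\Gamma_\omega\})$ is a continuous bi-$g$-Bessel sequence.

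For the lower bound I would set $S:=S_{\Phi,\Psi}$ and recast the hypothesis as $\|(S-\mathcal{M})f\|\le\alpha\|Sf\|+\beta\|\mathcal{M}f\|+\gamma\|K^{\ast}f\|$. Replacing $f$ by $S^{-1}f$ and again using $\|K^{\ast}f\|\le\sqrt{B/A}\,\|f\|$ gives
$$\left\|f-\mathcal{M}S^{-1}f\right\|\le\left(\alpha+\gamma\sqrt{B/A}\right)\|f\|+\beta\left\|\mathcal{M}S^{-1}f\right\|.$$
Since $\max\{\alpha+\gamma\sqrt{B/A},\beta\}<1$, Lemma \ref{Lemma2.5} applies to $T=\mathcal{M}S^{-1}$, so $T$ is invertible with $\|S\mathcal{M}^{-1}\|=\|T^{-1}\|\le\frac{1+\beta}{1-(\alpha+\gamma\sqrt{B/A})}$. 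Combining $\|S\|\le\|S\mathcal{M}^{-1}\|\,\|\mathcal{M}\|$ with the lower $K$-frame inequality $S\ge AKK^{\ast}$ from Theorem \ref{Th3.7}, whence $\|S\|\ge A\|KK^{\ast}\|$, gives $\|\mathcal{M}\|\ge A\frac{1-(\alpha+\gamma\sqrt{B/A})}{1+\beta}\|KK^{\ast}\|$, and a final appeal to Theorem \ref{Th3.7} identifies the constant $A\frac{1-(\alpha+\gamma\sqrt{B/A})}{1+\beta}$ as the asserted lower frame bound.

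The step I expect to be the main obstacle is the replacement of $f$ by $S^{-1}f$: for a genuine $K$-bi-$g$-frame the operator $S=S_{\Phi,\Psi}$ need not be boundedly invertible on all of $\mathcal{H}$, since its only lower control is $A\|K^{\ast}f\|^{2}\le\langle Sf,f\rangle$. This is exactly where the standing hypothesis that $K$ has closed range should enter, allowing one to work through the pseudo-inverse of $K$ and restrict the reconstruction to $\mathcal{R}(K)$, where the quadratic form is effectively coercive. A second delicate point is the passage from the norm estimate $\|\mathcal{M}\|\ge A'\|KK^{\ast}\|$ to the operator inequality $\mathcal{M}\ge A'KK^{\ast}$ required by Theorem \ref{Th3.7}; I would make this rigorous by arguing directly with the form $\langle\mathcal{M}f,f\rangle=\int_\Omega\langle\Lambda_\omega f,\Gamma_\omega f\rangle\,d\mu(\omega)$ rather than with operator norms alone.
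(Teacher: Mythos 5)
Your proposal takes essentially the same route as the paper: the paper's entire proof of this theorem is the single remark that it is ``analogous to Theorem \ref{th51}'', and your rendition --- running the Theorem \ref{th51} argument verbatim while absorbing the new term via $\|K^{\ast}f\|\le\sqrt{B/A}\,\|f\|$, which is exactly what turns $\alpha+\gamma$ into $\alpha+\gamma\sqrt{B/A}$ in both asserted bounds --- is precisely that analogy carried out. The two obstacles you flag (the unjustified invertibility of $S_{\Phi,\Psi}$, which a continuous $K$-bi-$g$-frame alone does not guarantee, and the passage from the norm estimate $\|\mathcal{M}\|\ge A'\|KK^{\ast}\|$ to the operator inequality $\mathcal{M}\ge A'KK^{\ast}$ that Theorem \ref{Th3.7} actually requires) are genuine, but they are inherited from the paper's own template proof, which likewise writes $S_{\Phi,\Psi}^{-1}$ and makes the same norm-to-operator jump without justification.
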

\begin{proof}
The proof is analogous to that of Theorem \ref{th51}.
\end{proof}
\begin{theorem} \label{th54}
 Suppose that $K \in \mathcal{B}(\mathcal{H})$ and $K$ has closed range. Let $\Phi=\left\{\Phi_{\omega}\;:\;\Phi_{\omega}\in\mathcal{B}(\mathcal{H},\mathcal{K}_{\omega})\right\}_{\omega \in \Omega}$ and $\Psi=\left\{\Psi_{\omega}\;:\;\Psi_{\omega}\in\mathcal{B}(\mathcal{H},\mathcal{K}_{\omega})\right\}_{\omega \in \Omega}$ are two continuous $g$-Bessel sequences with bounds $B_{\Phi}$, $B_{\Psi}$ respectively. Assume that $(\Phi, \Psi)_{K}$ be a continuous $K$-bi-$g$-frame for $\mathcal{H}$ with respect to $\left\{\mathcal{K}_{\omega}\right\}_{\omega \in \Omega}$ with bounds $A$ and $B$ and $\left(\left\{\Lambda_{\omega}\right\}_{\omega \in \Omega},\left\{\Gamma_{\omega}\right\}_{\omega \in \Omega}\right)$ be a pair of sequences for $\mathcal{H}$ with respect to $\left\{\mathcal{K}_{\omega}\right\}_{\omega \in \Omega}$. If there exist constants $\alpha, \beta, \sigma, \gamma \in[0,1)$ such that $\max \left\{\alpha+\sigma+\gamma\sqrt{\dfrac{ B }{A}} , \beta\right\}<1$ and
$$
\begin{aligned}
\left\|\int_{\Omega}\left(\Psi_{\omega}^* \Phi_{\omega}-\Gamma_{\omega}^* \Lambda_{\omega}\right) f\right\| 
\leq & \alpha\left\|\int_{\Omega} \Psi_{\omega}^* \Phi_{\omega} f\right\|+\beta\left\|\int_{\Omega} \Gamma_{\omega}^* \Lambda_{\omega} f\right\|+\sigma\left\| f\right\|+\gamma\left\|K^{\ast} f\right\|.
\end{aligned}
$$
Then $\left(\left\{\Lambda_{\omega}\right\}_{\omega \in \Omega},\left\{\Gamma_{\omega}\right\}_{\omega \in \Omega}\right)$ is a continuous $K$-bi-$g$-frame for $\mathcal{H}$ with respect to $\left\{\mathcal{K}_{\omega}\right\}_{\omega \in \Omega}$ with bounds
$$
A \dfrac{\left[1-\left( \alpha+\sigma+\gamma\sqrt{\dfrac{ B }{A}}  \right)\right]}{\left(1+\beta\right)}, \dfrac{\left[\left(1+\alpha\right) \sqrt{B_{\Phi}B_{\Psi}}+\sigma+\gamma\sqrt{\dfrac{ B }{A}}\right] }{1-\beta}.
$$
\end{theorem}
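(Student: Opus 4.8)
The plan is to follow the template of Theorem~\ref{th51}, carrying the two lower-order perturbation terms $\sigma\|f\|$ and $\gamma\|K^{\ast}f\|$ through the argument simultaneously. The first observation I would record is that, since $(\Phi,\Psi)_{K}$ has bounds $A$ and $B$, its defining inequality gives $A\|K^{\ast}f\|^{2}\le\int_{\Omega}\langle\Phi_{\omega}f,\Psi_{\omega}f\rangle\,d\mu(\omega)\le B\|f\|^{2}$, whence $\|K^{\ast}f\|\le\sqrt{B/A}\,\|f\|$ for every $f\in\mathcal{H}$. This single estimate is what converts the term $\gamma\|K^{\ast}f\|$ into $\gamma\sqrt{B/A}\,\|f\|$, and it explains both the hypothesis $\max\{\alpha+\sigma+\gamma\sqrt{B/A},\beta\}<1$ and the shape of the two claimed bounds.

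For the upper (Bessel) bound I would start from the triangle inequality
$$\Big\|\int_{\Omega}\Gamma_{\omega}^{\ast}\Lambda_{\omega}f\,d\mu(\omega)\Big\|\le\Big\|\int_{\Omega}\big(\Gamma_{\omega}^{\ast}\Lambda_{\omega}-\Psi_{\omega}^{\ast}\Phi_{\omega}\big)f\,d\mu(\omega)\Big\|+\Big\|\int_{\Omega}\Psi_{\omega}^{\ast}\Phi_{\omega}f\,d\mu(\omega)\Big\|,$$
insert the hypothesis, and solve for $\big\|\int_{\Omega}\Gamma_{\omega}^{\ast}\Lambda_{\omega}f\,d\mu(\omega)\big\|$ exactly as in Theorem~\ref{th51}, which produces the factor $1/(1-\beta)$. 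I would then bound $\big\|\int_{\Omega}\Psi_{\omega}^{\ast}\Phi_{\omega}f\,d\mu(\omega)\big\|\le\sqrt{B_{\Phi}B_{\Psi}}\,\|f\|$ by passing to $\sup_{\|g\|=1}$ and applying Cauchy--Schwarz with the $g$-Bessel bounds of $\Phi$ and $\Psi$, and substitute $\|K^{\ast}f\|\le\sqrt{B/A}\,\|f\|$. Collecting terms yields
$$\Big\|\int_{\Omega}\Gamma_{\omega}^{\ast}\Lambda_{\omega}f\,d\mu(\omega)\Big\|\le\frac{(1+\alpha)\sqrt{B_{\Phi}B_{\Psi}}+\sigma+\gamma\sqrt{B/A}}{1-\beta}\,\|f\|,$$
so that $\mathcal{M}f=\int_{\Omega}\Gamma_{\omega}^{\ast}\Lambda_{\omega}f\,d\mu(\omega)$ is well defined and bounded by the right-hand side; reading off $\langle\mathcal{M}f,f\rangle=\int_{\Omega}\langle\Lambda_{\omega}f,\Gamma_{\omega}f\rangle\,d\mu(\omega)\le\|\mathcal{M}\|\,\|f\|^{2}$ gives the upper continuous bi-$g$-frame bound.

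For the lower bound I would reinterpret the hypothesis with $S_{\Phi,\Psi}f=\int_{\Omega}\Psi_{\omega}^{\ast}\Phi_{\omega}f\,d\mu(\omega)$ and $\mathcal{M}$ as $\|(S_{\Phi,\Psi}-\mathcal{M})f\|\le\alpha\|S_{\Phi,\Psi}f\|+\beta\|\mathcal{M}f\|+\sigma\|f\|+\gamma\|K^{\ast}f\|$, then absorb the last two summands via $\|K^{\ast}f\|\le\sqrt{B/A}\,\|f\|$ into a single coefficient $\sigma+\gamma\sqrt{B/A}$ multiplying $\|f\|$. Replacing $f$ by $S_{\Phi,\Psi}^{-1}f$ puts the inequality in the form $\|f-\mathcal{M}S_{\Phi,\Psi}^{-1}f\|\le(\alpha+\sigma+\gamma\sqrt{B/A})\|f\|+\beta\|\mathcal{M}S_{\Phi,\Psi}^{-1}f\|$, to which Lemma~\ref{Lemma2.5} applies with parameters $\alpha'=\alpha+\sigma+\gamma\sqrt{B/A}$ and $\beta'=\beta$ (both in $[0,1)$ by hypothesis). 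This gives $\|S_{\Phi,\Psi}\mathcal{M}^{-1}\|\le\frac{1+\beta}{1-(\alpha+\sigma+\gamma\sqrt{B/A})}$, and combining it with $\|S_{\Phi,\Psi}\|\le\|S_{\Phi,\Psi}\mathcal{M}^{-1}\|\,\|\mathcal{M}\|$ and the estimate $S_{\Phi,\Psi}\ge A\,KK^{\ast}$ furnished by Theorem~\ref{Th3.7} yields $\mathcal{M}\ge A\frac{1-(\alpha+\sigma+\gamma\sqrt{B/A})}{1+\beta}KK^{\ast}$. A final appeal to Theorem~\ref{Th3.7} then identifies $(\{\Lambda_{\omega}\},\{\Gamma_{\omega}\})$ as a continuous $K$-bi-$g$-frame with exactly the stated bounds.

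The step I expect to be most delicate is this lower-bound passage: it relies on $S_{\Phi,\Psi}$ being invertible so that $\mathcal{M}S_{\Phi,\Psi}^{-1}$ is meaningful and Lemma~\ref{Lemma2.5} can be invoked, and on correctly translating the operator inequality $\mathcal{M}\ge A'KK^{\ast}$ back into the lower $K$-frame inequality through Theorem~\ref{Th3.7}. The routine parts, namely the triangle-inequality rearrangement and the Cauchy--Schwarz Bessel estimate, are identical to Theorem~\ref{th51}; the only genuinely new bookkeeping is keeping the two perturbation constants $\sigma$ and $\gamma$ separate until the very end, where $\|K^{\ast}f\|\le\sqrt{B/A}\,\|f\|$ merges them into the single quantity $\sigma+\gamma\sqrt{B/A}$.
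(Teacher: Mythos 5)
Your proposal is correct and takes essentially the same route as the paper: the paper's own proof of this theorem is simply the statement that it is ``similar to Theorem \ref{th51}'', and your argument is exactly the intended adaptation, using $\|K^{\ast}f\|\le\sqrt{B/A}\,\|f\|$ to merge $\sigma\|f\|+\gamma\|K^{\ast}f\|$ into the single perturbation coefficient $\alpha+\sigma+\gamma\sqrt{B/A}$ before running the triangle-inequality/Cauchy--Schwarz estimate for the upper bound and the Lemma \ref{Lemma2.5}--Theorem \ref{Th3.7} argument for the lower bound. Even the delicate points you flag (invertibility of $S_{\Phi,\Psi}$ so that $\mathcal{M}S_{\Phi,\Psi}^{-1}$ makes sense, and the passage from the estimate on $\mathcal{M}$ back to the lower $K$-frame inequality via Theorem \ref{Th3.7}) are handled in the paper's proof of Theorem \ref{th51} in precisely the way you handle them, so your reconstruction matches the paper's proof in both substance and detail.
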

\begin{proof}
The proof is similar to that of Theorem \ref{th51}.
\end{proof}
\section*{Declarations}

\medskip

\noindent \textbf{Availablity of data and materials}\newline
\noindent Not applicable.

\medskip

\noindent \textbf{Human and animal rights}\newline
\noindent We would like to mention that this article does not contain any studies
with animals and does not involve any studies over human being.

\medskip

\noindent \textbf{Conflict of interest}\newline
\noindent The authors declare that they have no competing interests.

\medskip

\noindent \textbf{Fundings} \newline
\noindent The authors declare that there is no funding available for this paper.

\medskip

\noindent \textbf{Authors' contributions}\newline
\noindent The authors equally conceived of the study, participated in its
design and coordination, drafted the manuscript, participated in the
sequence alignment, and read and approved the final manuscript. 

\medskip

\end{document}